\newcommand{\one}{{\bf 1}}
\newcommand{\reals}{{\mathbb R}}
\newcommand{\bbr}{\reals}
\newcommand{\vep}{\varepsilon}
\newcommand{\bbn}{\protect{\mathbb N}}
\newcommand{\bbc}{\protect{\mathbb C}}
\newcommand{\bbg}{\protect{\mathbb G}}
\newcommand{\A}{{\mathcal A}}
\newcommand{\R}{{\mathbb R}}
\newtheorem{theorem}{Theorem}[section]
\newtheorem{prop}{Proposition}[section]
\newtheorem{fact}{Fact}[section]
\newtheorem{lemma}{Lemma}[section]
\newtheorem*{defn}{Definition}
\newtheorem{remark}{Remark}[section]
\def\Cov{{\rm Cov}}
\def\Var{{\rm Var}}
\def\E{{\rm E}}
\def\esd{{\rm ESD} }
\def\eesd{{\rm EESD}}
\def\supp{{\rm Supp}}
\DeclareMathOperator{\Tr}{Tr} 
\DeclareMathOperator{\diag}{Diag}
\numberwithin{equation}{section}
\begin{document}

\title[Inhomogeneous Erd\H{o}s-R\'enyi random graphs]{Spectra of Adjacency and Laplacian\\ 
Matrices of Inhomogeneous\\ Erd\H{o}s-R\'enyi Random Graphs}

\author[A. Chakrabarty]{Arijit Chakrabarty}
\author[R.~S.~Hazra]{Rajat Subhra Hazra}
\address{Indian Statistical Institute, 203 B.T. Road, Kolkata 700108, India}
\email{arijit.isi@gmail.com}
\email{rajatmaths@gmail.com}
\author[F.\ den Hollander]{Frank den Hollander}
\author[M.\ Sfragara]{Matteo Sfragara} 
\address{Mathematisch instituut, Universiteit Leiden, The Netherlands}
\email{denholla@math.leidenuniv.nl}
\email{m.sfragara@math.leidenuniv.nl}

\begin{abstract}
The present paper considers inhomogeneous Erd\H{o}s-R\'enyi random graphs $\bbg_N$ on 
$N$ vertices in the non-sparse non-dense regime. The edge between the pair of vertices $\{i,j\}$ is 
retained with probability $\vep_N\,f(\tfrac{i}{N},\tfrac{j}{N})$, $1 \leq i \neq j \leq N$, independently 
of other edges, where $f\colon\,[0,1] \times [0,1] \to [0,\infty)$ is a continuous function such that 
$f(x,y)=f(y,x)$ for all $x,y \in [0,1]$.  We study the empirical distribution of both the adjacency 
matrix $A_N$ and the Laplacian matrix $\Delta_N$ associated with $\bbg_N$, in the limit as 
$N \to \infty$ when $\lim_{N\to\infty} \vep_N = 0$ and $\lim_{N\to\infty} N\vep_N = \infty$. In 
particular, we show that the empirical spectral distributions of $A_N$ and $\Delta_N$, after 
appropriate scaling and centering, converge to deterministic limits weakly in probability. For 
the special case where $f(x,y) = r(x)r(y)$ with $r\colon\,[0,1] \to [0,\infty)$ a continuous function, 
we give an explicit characterization of the limiting distributions. Furthermore, we apply our 
results to constrained random graphs, Chung-Lu random graphs and social networks.
\end{abstract}

\keywords{Adjacency matrices,Inhomogeneous Erd\H{o}s-R\'enyi random graph, Laplacian, 
Empirical spectral distribution, Constrained random graphs}
\subjclass[2000]{60B20,05C80, 46L54 }

\maketitle


\section{Introduction and main results}
\label{sec:intro}

Spectra of random matrices have been analyzed for almost a century. In recent 
years, many interesting results have been derived for random matrices associated 
with \emph{random graphs}, like the adjacency matrix and the Laplacian matrix 
(\cite{bauer2001random, farkas2001spectra, tran2013sparse, dumitriu2012sparse, 
bordenave2010resolvent, ding2010spectral, khorunzhy2004eigenvalue, bhamidi2012spectra, 
jiang2012low, jiang2012empirical, lee:schnelli}).

The focus of the present paper is on \emph{inhomogeneous} Erd\H{o}s-R\'enyi random 
graphs, which are rooted in the theory of complex networks. We consider the regime 
where the degrees of the vertices diverge sublinearly with the size of the graph. In this 
regime, which is neither sparse nor dense, we identify the scaling limit of the empirical 
spectral distribution, both for the adjacency matrix and the Laplacian matrix. For the 
special case where the connection probabilities have a \emph{product structure}, we 
obtain an explicit description of the scaling limit of the empirical spectral distribution in 
terms of objects that are rooted in free probability. It is known that in the absence of 
inhomogeneity, i.e., for standard Erd\H{o}s-R\'enyi random graphs, in the sparse regime 
the empirical spectral distributions of the adjacency matrix and the Laplacian matrix 
converge (after appropriate scaling and centering) to a semicircle law, respectively, 
a free additive convolution of a Gaussian and a semicircle law (see, for example, 
\cite{ding2010spectral,jiang2012empirical,bryc:dembo:jiang:2006}). Our results 
extend the latter to the inhomogeneous setting. 

There are some recent results on the largest eigenvalue of sparse inhomogeneous Erd\H{o}s-R\'enyi 
random graphs (\cite{benaych2017largest, benaych:2017}), and also on the empirical spectral 
distribution of adjacency matrices via the theory of graphons (\cite{zhu:2018}). Inhomogeneous 
Erd\H{o}s-R\'enyi random graphs with a product structure for the connection probabilities 
arise naturally in different contexts. In \cite{dembo:eyal}, the latter has been shown to play a 
crucial role in the identification of the limiting spectral distribution of the adjacency matrix 
of the configuration model. Our methodology allows us to look at some important applications. 
For instance, a \emph{Chung-Lu type random graph} is used to model \emph{sociability patterns} 
in networks. We show how to use the rescaled empirical spectral distribution and free probability 
to \emph{statistically recover the underlying sociability distribution}. Another important application 
is constrained random graphs. Given a sequence of positive integers, among the probability 
distributions for which the sequence of \emph{average degrees} matches the given sequence, 
called the \emph{soft configuration model}, the one that maximizes the entropy is the 
\emph{canonical Gibbs measure}. It is known that, under a sparsity condition, the connection 
probabilities arising out of the canonical Gibbs measure asymptotically have a product 
structure (\cite{squartini:demol:denhollander:garlaschelli:2015}). We show that our results on 
the adjacency matrix can be easily extended to cover such situations. The spectrum of the 
Laplacian of a random graph is well known to be connected to properties of the \emph{random 
walk} on the random graph, algebraic connectivity, and Kirchhoff's law, among others. The 
explicit bearing of the spectral distribution of the Laplacian on the corresponding graph are left 
for future research, for which our results may serve as a starting point.
          
The paper is organized as follows. The setting is defined in Section~\ref{sec:intro}, and four
scaling theorems are stated: Theorems~\ref{t.adj.gen}--\ref{t.random}. A number of technical 
lemmas are stated and proved in Section~\ref{sec:preps}. These serve as preparation for the 
proof of Theorems~\ref{t.adj.gen}--\ref{t.mult}, which is given in Section~\ref{sec:proofs}. 
Theorem \ref{t.random}, which is a randomized version of Theorem~\ref{t.adj.gen}, is proved in 
Section~\ref{sec:proofrandom}. In Section~\ref{sec:appl}, three applications are discussed,
organized into three propositions. Appendix~\ref{sec:facts} collects a few basic facts that are 
needed along the way.


\subsection{Setting}
\label{sec:setting}

Let $f\colon\,[0,1]\times[0,1]\to[0,\infty)$ be a continuous function, satisfying
\[
f(x,y)=f(y,x) \qquad \forall\,x,y \in[0,1]\,.
\]
A sequence of positive real numbers $(\vep_N\colon\,N \geq 1)$ is fixed that satisfies
\begin{equation}
\label{eq.conditions}
\lim_{N\to\infty}\vep_N=0\,, \qquad \lim_{N\to\infty}N\vep_N=\infty\,.
\end{equation}
Consider the random graph $\bbg_N$ on vertices $\{1,\ldots,N\}$ where, for each $(i,j)$ with 
$1\le i<j\le N$, an edge is present between vertices $i$ and $j$ with probability $\vep_Nf(\tfrac iN,
\tfrac jN)$, independently of other pairs of vertices. In particular, $\bbg_N$ is an undirected 
graph with no self loops and no multiple edges. Boundedness of $f$ ensures that $\vep_N
f(\tfrac iN,\tfrac jN) \leq 1$ for all $1\le i < j \le N$ when $N$ is large enough. If $f \equiv c$ 
with $c$ a constant, then $\bbg_N$ is the Erd\H{o}s-R\'enyi graph with edge retention probability
$\vep_N c$. For general $f$, $\bbg_N$ can be thought of as an \emph{inhomogeneous} version 
of the Erd\H{o}s-R\'enyi graph. 

The adjacency matrix of $\bbg_N$ is denoted by $A_N$. Clearly, $A_N$ is a symmetric random 
matrix whose diagonal entries are zero and whose upper triangular entries are independent
Bernoulli random variables, i.e.,  
\[
A_N(i,j) \triangleq \text{BER}\left(\vep_Nf\left(\tfrac iN,\tfrac jN\right)\right)\,,
\qquad 1\le i\neq j\le N\,.
\]
Write $P$ to denote the law of $A_N$. 


\subsection{Scaling}

Our first theorem states the existence of the limiting spectral distribution of $A_N$ after suitable scaling.
Here, and elsewhere in the paper, $\esd$ is the abbreviation for \emph{empirical spectral distribution}: 
the probability measure that puts mass $1/N$ at every eigenvalue, respecting its algebraic multiplicity.

\begin{theorem}
\label{t.adj.gen}
There exists a compactly supported and symmetric probability measure $\mu$ on $\bbr$ such that
\begin{equation}
\label{t.adj.gen.claim1}\lim_{N\to\infty} \esd\left((N\vep_N)^{-1/2}A_N\right) 
= \mu \quad \text{ weakly in probability}\,. 
\end{equation}
Furthermore, if 
\[
\min_{0\le x,y\le1}f(x,y)>0\,,
\]
then $\mu$ is absolutely continuous with respect to Lebesgue measure.
\end{theorem}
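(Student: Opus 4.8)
The plan is to establish convergence of the ESD via the moment method, and then handle absolute continuity as a separate (softer) argument.

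\medskip

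\textbf{Convergence of the ESD.} First I would center the matrix: let $\bar{A}_N = A_N - \E[A_N]$, where $\E[A_N](i,j) = \vep_N f(\tfrac iN,\tfrac jN)$ for $i \neq j$. The rank of $\E[A_N]$ is not bounded, but its operator norm is $O(N\vep_N)$, so after dividing by $(N\vep_N)^{1/2}$ the mean matrix has norm $O((N\vep_N)^{1/2}) \to \infty$ — so I cannot simply discard it. Instead the right move is to note that $(N\vep_N)^{-1/2}\E[A_N]$ is a low-complexity perturbation in the sense that it has few eigenvalues of large magnitude: indeed $\E[A_N] = \vep_N G_N$ where $G_N(i,j) = f(\tfrac iN, \tfrac jN)$ is (a discretization of) an integral operator with continuous kernel, hence approximately finite-rank, so its contribution to the ESD is negligible after rescaling by $(N\vep_N)^{-1/2}$ and the rank-perturbation inequality for Stieltjes transforms (or the Hoffman–Wielandt / rank inequality for empirical distribution functions) kills it. So it suffices to prove $\esd((N\vep_N)^{-1/2}\bar{A}_N) \to \mu$. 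For the centered matrix I would compute $\E\,\Tr\big[((N\vep_N)^{-1/2}\bar{A}_N)^k\big]$ by the standard expansion over closed walks of length $k$ on $\{1,\dots,N\}$. Entries are independent, mean zero, with $\E[\bar{A}_N(i,j)^2] = \vep_N f(\tfrac iN,\tfrac jN)(1 - \vep_N f(\tfrac iN,\tfrac jN)) \sim \vep_N f(\tfrac iN,\tfrac jN)$, and higher moments $\E[|\bar{A}_N(i,j)|^p] = O(\vep_N)$ for all $p \geq 2$. The combinatorics is exactly as in the Wigner case: the power of $N\vep_N$ forces each edge of the walk to be traversed exactly twice and the underlying graph to be a tree, so only double-tree walks survive, but now each surviving term carries a product of factors $f(\tfrac{i_a}N,\tfrac{i_b}N)$ along the tree edges. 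Replacing the Riemann sum by an integral, $\lim_N \E\big[\tfrac1N \Tr((N\vep_N)^{-1/2}\bar{A}_N)^{2m}\big]$ equals a sum over plane trees with $m$ edges of integrals $\int_{[0,1]^{m+1}} \prod_{\text{edges }(a,b)} f(x_a,x_b)\,dx$, and the odd moments vanish; these are the moments of a compactly supported symmetric measure $\mu$ (compact support because $f$ is bounded, symmetry because odd moments vanish). Then a variance computation ($\Cov$ of two traces is $o(1)$, again by a standard walk-pair count using independence of entries) upgrades convergence in expectation to weak convergence in probability.

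\medskip

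\textbf{Absolute continuity when $f > 0$.} Under $m := \min f > 0$, write $f(x,y) = m + g(x,y)$ with $g \geq 0$ continuous. The point is that $(N\vep_N)^{-1/2}\bar A_N$ then behaves, at the level of limiting spectral distribution, like an independent sum: a genuine Wigner matrix of variance $m$ (whose limit is the semicircle law $\sigma_m$, which has a bounded density) plus a remainder. More precisely, I would realize $\bar A_N$ as a sum of independent centered arrays and use that the free convolution with a semicircle law of positive variance produces an absolutely continuous measure. Concretely: one can couple so that $A_N = B_N + C_N$ (as symmetric matrices with independent entries) where $B_N$ has $\E[B_N(i,j)^2] \to \vep_N m$ and contributes the semicircle of variance $m$ — actually cleaner is to argue at the level of the limit. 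The limiting measure $\mu$ of the product-type lower bound kernel $\tilde f \equiv m$ is exactly $\sigma_m$; for general $f \geq m$ one shows $\mu = $ (limit) can be written as $\sigma_m \boxplus \nu$ for some probability measure $\nu$, because the moment/operator structure decomposes as a free sum of an $m$-variance semicircular part and a part coming from $g$ (this is the kind of free-probabilistic identification carried out for the product case later in the paper, and the general $f\geq m$ case follows by a domination/subordination argument). Since $\sigma_m$ has a bounded density and $\boxplus$ with any probability measure preserves absolute continuity (regularization property of the semicircle, e.g.\ via Biane's results on the subordination functions), $\mu \ll$ Lebesgue.

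\medskip

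\textbf{Main obstacle.} The genuinely delicate point is the absolute-continuity claim: identifying the limit $\mu$ (for general $f$, not the product case) as a free convolution involving a semicircle of strictly positive variance, so that one can invoke the smoothing property of $\boxplus\,\sigma_m$. The moment method gives $\mu$ only through its moments, and reading off a free-convolution structure from the tree-integral formula $\sum_{\text{trees}} \int \prod f(x_a,x_b)$ requires care — one must exhibit an operator-algebraic model (e.g.\ $f = m + g$ giving a semicircular element of variance $m$ freely independent, in the limit, from the operator built out of $g$) rather than manipulate moments directly. Everything else — the low-rank removal of the mean, the double-tree walk combinatorics, and the variance bound — is routine Wigner-type analysis adapted to the inhomogeneous weights.
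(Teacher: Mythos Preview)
Your proposal is essentially correct, but it proceeds differently from the paper in two respects.

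\textbf{Convergence.} The paper does not run the moment method on the centered Bernoulli matrix. Instead it (i) centers via the Hoffman--Wielandt/L\'evy bound (Lemma~\ref{l.mc}), (ii) \emph{Gaussianizes} the entries using Chatterjee's Lindeberg-type invariance principle for the Stieltjes transform (Lemma~\ref{l.gauss}), (iii) drops the $1-\vep_N f$ factor in the variance (Lemma~\ref{l.minor}), and then (iv) cites an existing result for Gaussian Wigner matrices with a continuous variance profile. Your direct moment computation on the Bernoulli entries is a legitimate alternative and is more self-contained for this theorem alone; the paper's Gaussianization, on the other hand, is a reusable reduction that it also needs for the Laplacian (Theorem~\ref{t.lap.gen}), so it pays for itself later. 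Incidentally, your worry about the mean matrix is overcomplicated: you do not need any ``low-complexity'' or approximate-finite-rank argument, since $\tfrac1N\|(N\vep_N)^{-1/2}\E A_N\|_F^2=O(\vep_N)\to0$ already gives the L\'evy bound directly (this is exactly Lemma~\ref{l.mc}); the divergence of the operator norm is irrelevant for the ESD.

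\textbf{Absolute continuity.} Your decomposition $f=m+g$ and the identification $\mu=\sigma_m\boxplus\nu$ via asymptotic freeness of a scaled GOE from an independent variance-profile Wigner is correct and is, in spirit, what the paper does---it just outsources both steps, citing \cite{chakrabarty:2017} for the free-convolution structure and Biane \cite{biane:1997} for the smoothing property of $\boxplus\,\sigma_m$. So the ``main obstacle'' you flag is real but is handled by exactly the mechanism you describe: realize $\mu$ as the LSD of a Gaussian variance-profile matrix (which your moment formula already does, since only second moments survive in the tree sums), split that Gaussian matrix as an independent sum, and invoke Voiculescu's asymptotic freeness (the paper's Fact~\ref{fact:voiculescu}) to get the semicircular free summand. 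Your sketch of coupling the \emph{Bernoulli} entries as $B_N+C_N$ would not work, but you correctly retreat to arguing at the Gaussian/limit level, which is the right move.
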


The Laplacian of $\bbg_N$ is the $N \times N$ matrix $\Delta_N$ defined as
\[
\Delta_N(i,j)=
\begin{cases}
-\sum_{k=1}^NA_N(i,k), &i=j\,,\\
A_N(i,j),&i \neq j\,.
\end{cases}
\]
Our second theorem is the analogue of Theorem \ref{t.adj.gen} with $A_N$ replaced by $\Delta_N$.

\begin{theorem}
\label{t.lap.gen}
There exists a symmetric probability measure $\nu$ on $\bbr$ such that
\[
\lim_{N\to\infty} \esd\left((N\vep_N)^{-1/2}\bigl(\Delta_N-D_N\bigr)\right) 
= \nu \quad \text{ weakly in probability}\,, 
\]
where 
\begin{equation}
\label{DNdef}
D_N=\diag\left(\E\bigl(\Delta_N(1,1)\bigr),\ldots,\E\bigl(\Delta_N(N,N)\bigr)\right)\,.
\end{equation}
Furthermore, if
\[
f \not\equiv 0,
\]
then the support of $\nu$ is unbounded.
\end{theorem}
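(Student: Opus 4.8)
The plan is to prove Theorem~\ref{t.lap.gen} in two parts: first the existence of the limiting measure $\nu$, and then the claim about unbounded support. For the existence part, I would write $\Delta_N - D_N = A_N + (\Delta_N - D_N - A_N) = A_N - \widetilde{D}_N$, where $\widetilde{D}_N = \diag\bigl(\sum_{k}A_N(1,k) - \E(\ldots),\ldots\bigr)$ is the diagonal matrix of centered degrees. After dividing by $(N\vep_N)^{1/2}$, the matrix splits into the (scaled, already-centered) adjacency matrix, whose ESD converges to $\mu$ by Theorem~\ref{t.adj.gen}, plus a diagonal matrix whose entries are $(N\vep_N)^{-1/2}\sum_k \bigl(A_N(i,k)-\E A_N(i,k)\bigr)$. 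Each such entry is a centered sum of roughly $N$ independent Bernoullis with individual variances $\asymp\vep_N$, so its variance is $\asymp N\vep_N$ and after the scaling it is $O(1)$; moreover, by a Lindeberg CLT the marginal law of the $i$th entry converges (with $i/N\to x$) to a centered Gaussian with variance $\int_0^1 f(x,y)\,dy$. The empirical distribution of these diagonal entries therefore converges to the mixture $\int_0^1 \mathcal{N}\bigl(0,\int_0^1 f(x,y)\,dy\bigr)\,dx$; call the resulting (symmetric, since it is a mixture of centered Gaussians) probability measure $\sigma$. Since $\widetilde{D}_N$ is diagonal and the scaled adjacency matrix has bounded-in-probability operator norm in the relevant sense, one combines the two via the standard ``free-probability meets random matrices'' machinery: asymptotic freeness of a Wigner-type matrix and an independent diagonal matrix with converging ESD (as in \cite{ding2010spectral,jiang2012empirical}) gives $\nu = \mu \boxplus \sigma$, the free additive convolution. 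Symmetry of $\nu$ follows because both $\mu$ and $\sigma$ are symmetric.

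For the unbounded-support claim, the key observation is that $\sigma$ itself has unbounded support whenever $f\not\equiv 0$: then $\int_0^1 f(x,y)\,dy > 0$ on a set of positive Lebesgue measure in $x$, so $\sigma$ is a genuine mixture of nondegenerate Gaussians and hence has full support on $\R$ (in particular unbounded). Now I would use the general fact that for the free additive convolution, $\supp(\mu \boxplus \sigma)$ is unbounded whenever $\supp(\sigma)$ is unbounded (with $\mu$ compactly supported): one can see this, e.g., from the subordination functions, or more elementarily from the lower bound on moments --- the even moments of $\mu\boxplus\sigma$ dominate (up to additive/multiplicative contributions from $\mu$) those of $\sigma$, which grow super-exponentially because $\sigma$ has Gaussian tails. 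Concretely, $\int x^{2k}\,d(\mu\boxplus\sigma)(x) \geq c\int x^{2k}\,d\sigma(x) - (\text{lower-order in }k)$ is not quite immediate for free convolution, so instead I would argue at the level of random matrices directly: for the diagonal matrix $(N\vep_N)^{-1/2}\widetilde D_N$, the maximum absolute entry grows like $\sqrt{\log N}$ in probability (extreme value of $N$ asymptotically Gaussian variables), whereas $\|(N\vep_N)^{-1/2}A_N\|_{\mathrm{op}}$ stays bounded; by Weyl's inequality the extreme eigenvalue of the sum is at least $\sqrt{\log N}(1-o(1)) - O(1)\to\infty$, and a positive (though vanishing) fraction of eigenvalues escapes any fixed window, forcing $\supp(\nu)$ to be unbounded.

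The main obstacle I anticipate is rigorously justifying the appearance of the free convolution, i.e., establishing the asymptotic freeness of the scaled, centered adjacency matrix and the diagonal degree-fluctuation matrix. These two matrices are \emph{not} independent --- $\widetilde D_N$ is a deterministic function of the very entries making up $A_N$ --- so the classical asymptotic-freeness results for a Wigner matrix and an independent deterministic diagonal do not apply verbatim. The resolution I would pursue is to show that this dependence is asymptotically negligible: the $i$th diagonal entry of $\widetilde D_N$ is a sum of $N$ terms, and removing or resampling the $o(N)$ entries in any fixed row that also appear in a fixed moment computation changes it by $o(1)$ in the relevant norm; hence one may couple $(A_N,\widetilde D_N)$ with a pair in which the diagonal matrix is built from an independent copy, with vanishing error in every mixed moment $\frac1N\Tr\bigl[(\text{scaled }A_N)^{a_1}\widetilde D_N^{b_1}\cdots\bigr]$. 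This ``decoupling'' step, together with the moment method (showing each such normalized mixed trace converges to the corresponding free-probabilistic quantity), is the technical heart of the argument; once it is in place, identifying $\nu=\mu\boxplus\sigma$ and deducing its properties is routine. I would expect the detailed decoupling and moment bookkeeping to be carried out in the preparatory lemmas of Section~\ref{sec:preps}.
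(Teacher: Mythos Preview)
Your decomposition and the decoupling step you flag as the main obstacle are both correct and match the paper: after centering and Gaussianisation (Lemmas~\ref{l.mc}--\ref{l.minor}), the dependence between $\bar A_N$ and the row-sum diagonal is removed in Lemma~\ref{l.decouple}, yielding $\tilde\Delta_N=\bar A_N+Q_N$ with $Q_N(i,i)=F(i/N)Z_i$, $F(x)=\bigl(\int_0^1 f(x,y)\,dy\bigr)^{1/2}$, and $(Z_i)$ independent of $\bar A_N$. The genuine gap is the step that follows: your identification $\nu=\mu\boxplus\sigma$ is \emph{false} in general. Independence of $\bar A_N$ and $Q_N$ as random matrices does not give asymptotic freeness, because $\bar A_N$ is a Wigner matrix with a non-constant variance profile and $Q_N$ carries the same ``position'' index $i/N$. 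Concretely,
\[
\frac1N\,\E\Tr\bigl(\bar A_N^2\,Q_N^2\bigr)\ \longrightarrow\ \int_0^1 F(x)^4\,dx,
\]
whereas freeness of two centered elements would force the limit $\bigl(\int_0^1 F(x)^2\,dx\bigr)^2$; these differ whenever $F$ is non-constant, i.e., whenever $f$ is genuinely inhomogeneous. The same obstruction is visible in Theorem~\ref{t.mult}: in the multiplicative case $\nu$ is the law of $r^{1/2}(T_u)T_sr^{1/2}(T_u)+\alpha\,r^{1/4}(T_u)T_gr^{1/4}(T_u)$, and the two summands share $T_u$, so they are not free unless $r$ is constant. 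The references you cite treat the homogeneous case, where $F$ is constant and freeness does hold.

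Because no free-convolution description is available, the paper proves existence directly by moments: Lemma~\ref{l.diag} (writing $Q_N=B_NU_N$ with $B_N$ deterministic diagonal and $U_N=\diag(Z_1,\ldots,Z_N)$, and tracking non-crossing pair partitions) shows that every mixed trace $\frac1N\Tr\bigl(Q_N^{n_1}\bar A_N\cdots Q_N^{n_m}\bar A_N\bigr)$ converges in $L^2$ to a deterministic constant, hence all even moments converge to some $\gamma_k$. Uniqueness of $\nu$ is then obtained not via Carleman but by truncating the Gaussians at level $K$, building $\nu_K$ with a uniform bound on the moment generating function (inequality~\eqref{t.lap.gen.eq9}), and letting $K\to\infty$. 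Your unbounded-support idea via Weyl's inequality is the paper's as well, but note that a single extreme eigenvalue diverging says nothing about $\supp(\nu)$; one must control a vanishing \emph{fraction} of eigenvalues. The paper does this by bounding $\lambda_{[Np]}(\bar A_N+Q_N)$ for fixed $p$ and then sending $p\downarrow0$ via Fact~\ref{f:rep}, using only the compact support of $\mu$ rather than an operator-norm bound on $\bar A_N$.
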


\begin{remark}
{\rm The $\esd$ of a random matrix is a random probability measure. Note that $\mu$ and 
$\nu$ are both deterministic, i.e., a law of large numbers is in force.

\medskip\noindent
(1) Theorems \ref{t.adj.gen} and \ref{t.lap.gen} are existential, in the sense that 
explicit descriptions of $\mu$ and $\nu$ are missing. We have some control on the 
Stieltjes transform of $\mu$. In the proof of Theorem~\ref{t.adj.gen} (in Lemma~\ref{l.minor}) 
we will see that the ESD of $(N\vep_N)^{-1/2} A_N$ has the same limit as the ESD of 
\[ 
\bar A_N(i,j)= \sqrt{\frac{1}{N}f\left( \frac{i}{N}, \frac{j}{N}\right)}\, G_{i\wedge j, i\vee j}
\]
with $(G_{i,j}: \, 1\le i\le j)$ a family of i.i.d.\ standard Gaussian random variables. Such random 
matrices are known in the literature as Wigner matrices with a variance profile (see, for example, 
\cite{ajanki2017,hachem:2007,shlyakhtenko:1996,chakrabarty:2017}). The LSD of $\bar A_N$ matches with the LSD of certain symmetric random matrices 
with dependent entries; see \cite{chakrabarty:hazra:sarkar:2016} for details. It turns out that, by 
using the combinatorics of non-crossing partitions, we can derive a recursive equation for the 
Stieltjes transform of $\mu$, i.e.,
\[
G_{\mu}(z)= \int_{\mathbb R} \frac{1}{z-x}\, \mu(dx)\,, \quad z\in \mathbb C\setminus \mathbb R\,.
\] 
It turns out that 
\[
G_\mu(z)= \int_0^1\mathcal H(z,x)\, dx\,, 
\]
where $\mathcal H(z,x)$, $x \in [0,1]$, is the unique analytic solution of the integral equation 
\[
z\mathcal H(z,x)=1+\mathcal H(z,x) \int_0^1 \mathcal H(z,y) f(x,y)\,dy\,, \,\qquad x\in [0,1]\,.
\]
The form of $\mathcal H(z,x)$ can also be expressed in terms of non-crossing partitions and the 
function $f(x,y)$ (see \cite[Section 4.1]{chakrabarty:hazra:sarkar:2015} for details). We mention 
that the above measure is similar to the limiting measure in \cite[Theorem 3.4]{zhu:2018}. It is 
shown in \cite{zhu:2018} that a graphon sequence $W_N$ can be associated with a Wigner 
matrix with a variance profile $(s_{i,j}: 1\le i, j\le N)$. If the sequence of graphons $W_N$
converges in the cut norm to $W$ with $W(x,y)=f(x,y)$, then the limiting measure matches 
with $\mu$.  

\medskip\noindent
(2) The description of $\nu$ through its Stieltjes transform is hard to obtain, although, just 
like before, the ESD of $(N\vep_N)^{-1/2}(\Delta_N-D_N)$ turns out to same as that of 
\[
\tilde\Delta_N=\bar A_N+Y_N
\] 
with
\[
Y_N(i,i)=Z_i\,\sqrt{\frac1N\sum_{1\le j\le N,\,j\neq i}f\left(\frac iN,\frac jN\right)}\,,
\qquad 1\le i\le N\,,
\]
where $(Z_i\colon\,i \ge 1)$ is a family of i.i.d.\ standard normal random variables, independent 
of $(G_{i,j}\colon\,1\le i \le j)$. Suppose that $Y_N$ is a deterministic diagonal matrix, embedded 
in $L^\infty[0,1]$ (as a step function). For the case where this function converges to a function 
$h$ in the $\|\cdot\|_\infty$ norm, the limiting spectral distribution of $\bar A_N+Y_N$ was 
studied in \cite{shlyakhtenko:1996} (see also \cite[Theorem 22.7.2]{Speicher:survey}). In our 
case, due to the presence in $Y_N$ of Gaussian random variables (which have unbounded 
support) and the fact that the spectral norm of $Y_N$ tends to infinity as $N\to\infty$, the existing 
results cannot be applied. One of the major contributions of our paper is to overcome this hurdle. 
Also, our proofs ensure that $\nu$ has a finite moment generating function (see \eqref{t.lap.gen.eq7}
below) and unbounded support.}
\end{remark}


\subsection{Multiplicative structure}

Our next theorem identifies $\mu$ and $\nu$ under the additional assumption that $f$ has a 
\emph{multiplicative structure}, i.e.,
\begin{equation}
\label{eq.mult}
f(x,y)=r(x)r(y)\,, \qquad x,y \in [0,1]\,,
\end{equation}
for some continuous function $r\colon\, [0,1] \to [0,\infty)$. The statement is based on the theory 
of (possibly unbounded) self-adjoint operators affiliated with a $W^*$-probability space. A few 
relevant definitions are given below. For details the reader is referred to 
\cite[Section 5.2.3]{anderson:guionnet:zeitouni:2010}.

\begin{defn}
A $C^*$-algebra $\A\subset B({\mathcal H})$, with $\mathcal H$ a Hilbert space, is a 
\emph{$W^*$-algebra} when $\A$ is closed under the weak operator topology. If, in addition, 
$\tau$ is a state such that there exists a unit vector $\xi\in\mathcal H$ satisfying
\[
\tau(a)=\langle a\xi,\xi\rangle \qquad \forall\,a\in\mathcal H\,,
\]
then $(\A,\tau)$ is a \emph{$W^*$-probability space}. In that case a densely defined 
self-adjoint (possibly unbounded) operator $T$ on $\mathcal H$ is said to be \emph{affiliated 
with $\A$} if $h(T)\in\A$ for any bounded measurable function $h$ defined on the spectrum 
of $T$, where $h(T)$ is defined by the spectral theorem. Finally, for an affiliated operator 
$T$, its \emph{law} ${\mathcal L}(T)$ is the unique probability measure on $\bbr$ satisfying
\[
\tau\left(h(T)\right)=\int_\R h(x)\bigl({\mathcal L}(T)\bigr)(dx)
\]
for every bounded measurable $h\colon\,\bbr\to\bbr$.
\end{defn}

The distribution of a single self-adjoint operator is defined above. For two or more self-adjoint 
operators $T_1,\ldots,T_n$, a description of their \emph{joint distribution} is a specification of
\[
\tau\big(h_1\left(T_{i_1}\right) \cdots h_k\left(T_{i_k}\right)\big)\,,
\]
for all $k\ge1$, all $i_1,\ldots,i_k\in\{1,\ldots,n\}$, and all bounded measurable functions $h_1,
\ldots,h_k$ from $\bbr$ to itself. Once the above is specified, it is immediate that ${\mathcal L}
(p(T_1,\ldots,T_k))$ can be calculated for any polynomial $p$ in $k$ variables such that 
$p(T_1,\ldots,T_k)$ is self-adjoint. 

\begin{defn}
Let $(\A,\tau)$ be a $W^*$-probability space and $a_1,a_2\in\A$. Then $a_1$ and $a_2$ are 
\emph{freely independent} if 
\[
\tau\left(p_1(a_{i_1}) \cdots p_n(a_{i_n})\right)=0\,,
\]
for all $n\ge1$, all $i_1,\ldots,i_n\in\{1,2\}$ with $i_j\neq i_{j+1}$, $j=1,\ldots,n-1$, and 
all polynomials $p_1,\ldots,p_n$ in one variable satisfying
\[
\tau\left(p_j(a_{i_j})\right)=0,\qquad j=1,\ldots,n\,.
\]
For (possibly unbounded) operators $a_1,\ldots,a_k$ and $b_1,\ldots,b_m$ affiliated with $\A$, 
the collections $(a_1,\ldots,a_k)$ and $(b_1,\ldots,b_m)$ are \emph{freely independent} if and
only if
\[
p\left(h_1(a_1),\ldots,h_k(a_k)\right)\,\text{ and } \, q\left(g_1(b_1),\ldots,g_m(b_m)\right)\,,
\]
are freely independent for all bounded measurable $h_1,\ldots,h_k$ and $g_1,\ldots,g_m$, and 
all polynomials $p$ and $q$ in $k$ and $m$ non-commutative variables, respectively. It is immediate 
that the two operators in the above display are bounded, and hence belong to $\A$.
\end{defn}

We are now in a position to state our third and last theorem.

\begin{theorem}
\label{t.mult}
If $f$ is as in \eqref{eq.mult}, then
\begin{equation}
\label{t.mult.eq1}\mu={\mathcal L}\left(r^{1/2}(T_u)T_sr^{1/2}(T_u)\right)\,,
\end{equation}
and
\begin{equation}
\label{t.mult.eq2}\nu={\mathcal L}\left(r^{1/2}(T_u)T_sr^{1/2}(T_u)
+\alpha r^{1/4}(T_u)T_gr^{1/4}(T_u)\right)\,,
\end{equation}
where
\[
\alpha=\left(\int_0^1r(x)\,dx\right)^{1/2}\,.
\]
Here, $T_g$ and $T_u$  are commuting self-adjoint operators affiliated with a $W^*$-probability 
space $(\A,\tau)$ such that, for bounded measurable functions $h_1,h_2$ from $\bbr$ to itself,
\begin{equation}
\label{t.mult.eq3}
\tau\left(h_1(T_g)h_2(T_u)\right) = \left(\int_{-\infty}^\infty h_1(x)\phi(x)\,dx \right)
\left(\int_0^1h_2(u)\,du\right)\,,
\end{equation}
with $\phi$ the standard normal density. Furthermore, $T_s$ has a standard semicircle distribution 
and is freely independent of $(T_g,T_u)$. 
\end{theorem}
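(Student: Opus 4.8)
The plan is to reduce the identification of $\mu$ and $\nu$ to a moment computation, and then to match those moments against the moments of the operators on the right-hand sides of \eqref{t.mult.eq1}--\eqref{t.mult.eq2}. First I would invoke the reductions already recorded in the Remark: by Lemma~\ref{l.minor} the ESD of $(N\vep_N)^{-1/2}A_N$ has the same limit as the ESD of the Gaussian variance-profile matrix $\bar A_N$ with $\bar A_N(i,j)=\sqrt{\tfrac1N f(\tfrac iN,\tfrac jN)}\,G_{i\wedge j,i\vee j}$, and similarly the ESD of $(N\vep_N)^{-1/2}(\Delta_N-D_N)$ has the same limit as that of $\tilde\Delta_N=\bar A_N+Y_N$ with the diagonal Gaussian matrix $Y_N$ as displayed. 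So it suffices to compute the limiting moments $\lim_N \tfrac1N\,\E\,\Tr(\bar A_N^{\,k})$ and $\lim_N \tfrac1N\,\E\,\Tr(\tilde\Delta_N^{\,k})$, show these limits exist and determine a unique (sub-Gaussian-tailed) measure, and check they coincide with the moments of the claimed operators.

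The main computation is the moment expansion. Writing $f(x,y)=r(x)r(y)$, we have $\bar A_N(i,j)=\sqrt{\tfrac1N}\,r(\tfrac iN)^{1/2}r(\tfrac jN)^{1/2}G_{i\wedge j,i\vee j}$, so
\begin{equation}
\label{eq.momexp}
\frac1N\,\E\,\Tr(\bar A_N^{\,k})
=\frac1{N^{1+k/2}}\sum_{i_1,\dots,i_k}\prod_{\ell=1}^k r\!\left(\tfrac{i_\ell}{N}\right)\;
\E\prod_{\ell=1}^k G_{i_\ell\wedge i_{\ell+1},\,i_\ell\vee i_{\ell+1}}\,,
\end{equation}
with indices cyclic. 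By Wick's formula the Gaussian expectation is a sum over pair partitions of the $k$ edges $\{i_\ell,i_{\ell+1}\}$, and a standard genus/Euler-characteristic argument (exactly as in the Wigner case) shows that only non-crossing pairings of a closed walk survive in the limit, forcing $k$ even, $k=2m$, and the surviving index configurations to be labelings of a plane tree with $m$ edges. Each surviving term contributes $\prod_{v}\big(\tfrac1N\sum_{i} r(\tfrac iN)\cdots\big)$ which converges, by continuity of $r$ and a Riemann-sum argument, to an iterated integral over $[0,1]$ of products of $r$ along the tree. One then recognizes the resulting sum over non-crossing pair partitions, weighted by these $r$-integrals, as precisely $\tau\big((r^{1/2}(T_u)T_sr^{1/2}(T_u))^{2m}\big)$: here $T_u$ is the "position" operator whose distribution under $\tau$ is Lebesgue on $[0,1]$ by \eqref{t.mult.eq3}, $T_s$ is semicircular and free of $T_u$, and the free-independence/semicircularity collapses the mixed trace to a sum over non-crossing pairings with the $T_u$-blocks integrated out — this is the operator-side bookkeeping that has to be matched term by term against the combinatorics of \eqref{eq.momexp}. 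For $\nu$ one repeats the expansion with $\tilde\Delta_N=\bar A_N+Y_N$: the diagonal entries $Y_N(i,i)=Z_i\sqrt{\tfrac1N\sum_{j\neq i}f(\tfrac iN,\tfrac jN)}$ satisfy $\tfrac1N\sum_{j\neq i}r(\tfrac iN)r(\tfrac jN)\to r(x)\int_0^1 r(y)\,dy=\alpha^2 r(x)$ uniformly in $i/N\to x$, so $Y_N$ behaves like $\alpha\, r^{1/4}(T_u)\,T_g\,r^{1/4}(T_u)$ with $T_g$ a semicircular\footnote{In the limit the rescaled diagonal Gaussians enter the trace only through non-crossing pairings among themselves, so their contribution is that of a semicircular element, not a Gaussian one; this is the familiar phenomenon that a diagonal matrix of i.i.d.\ Gaussians, when added to a Wigner-type matrix and examined through $\tfrac1N\Tr$, acts freely and semicircularly.} element commuting with $T_u$ — note \eqref{t.mult.eq3} already encodes that $T_g$ has the standard normal\,/\,semicircle-type weight $\phi$ and is "located" by $T_u$ via the product of integrals. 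The cross terms between $\bar A_N$ and $Y_N$ vanish in expectation, and mixed off-diagonal/diagonal pairings are non-crossing-compatible exactly when they respect the free-independence structure of $(T_s)$ versus $(T_g,T_u)$, yielding $\nu=\mathcal L\big(r^{1/2}(T_u)T_sr^{1/2}(T_u)+\alpha r^{1/4}(T_u)T_gr^{1/4}(T_u)\big)$.

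Two auxiliary points close the argument. First, moment convergence must be upgraded to weak convergence of the ESD: for $\mu$ this is automatic since $\mu$ is compactly supported (Theorem~\ref{t.adj.gen}) hence moment-determinate; for $\nu$ one needs the moment growth bound $\tfrac1N\E\Tr(\tilde\Delta_N^{2m})\le (Cm)^{m}$, say, which follows from the $Y_N$ contribution (diagonal Gaussians to the $2m$-th power give the $m$-th moment growth of a semicircular times a bounded factor) and gives a finite moment generating function for $\nu$ — this is the bound \eqref{t.lap.gen.eq7} alluded to in the Remark — so $\nu$ is again moment-determinate. Second, one must verify that the operator-theoretic objects on the right-hand side actually exist and are well-defined self-adjoint affiliated operators: $T_u$ is bounded, $r$ is continuous hence $r^{1/2}(T_u),r^{1/4}(T_u)$ are bounded positive, and $T_s,T_g$ are essentially self-adjoint on a suitable common core, so the expressions $r^{1/2}(T_u)T_sr^{1/2}(T_u)$ etc.\ are densely defined symmetric operators whose closures are self-adjoint and affiliated with $(\A,\tau)$; this is routine and can be deferred to a lemma or cited from \cite[Section 5.2.3]{anderson:guionnet:zeitouni:2010}.

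The step I expect to be the main obstacle is the precise matching in \eqref{eq.momexp}: showing that the combinatorial sum coming from Wick's formula on the matrix side — non-crossing pair partitions of a $2m$-walk with vertices weighted by limiting Riemann sums of $r$ — is literally equal, partition by partition, to the free-probability moment $\tau\big((r^{1/2}(T_u)T_sr^{1/2}(T_u))^{2m}\big)$ and its $\nu$-analogue. This requires carefully tracking how the $r^{1/2}(T_u)$ factors, which sit between consecutive copies of $T_s$, get grouped by the non-crossing pairing of the $T_s$'s into exactly the "blocks" of walk-vertices that the genus argument identifies, and using freeness to integrate each block against Lebesgue measure on $[0,1]$ — equivalently, recognizing the whole expression as a free multiplicative-type convolution structure. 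Handling the mixed $\bar A_N$–$Y_N$ terms for $\nu$, and in particular confirming that the diagonal Gaussians contribute a semicircular (not Gaussian) element in the limit while remaining "attached" to the correct power of $r(T_u)$, is the most delicate piece of that bookkeeping.
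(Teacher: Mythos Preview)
Your approach to identifying $\mu$ via direct Wick combinatorics is workable, though the paper takes a shorter route: it writes $\bar A_N = R_N G_N R_N$ with $R_N=\diag\bigl(r^{1/2}(i/N)\bigr)$ and $G_N$ a scaled standard Gaussian Wigner matrix, then applies Voiculescu's asymptotic-freeness theorem (Fact~\ref{fact:voiculescu}) directly to the pair $(G_N,\text{diagonals})$. This sidesteps the partition-by-partition matching you identify as the main obstacle, and yields \eqref{t.mult.eq1} without any explicit combinatorics.

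Your treatment of $\nu$, however, contains a genuine error. The operator $T_g$ in the statement is \emph{Gaussian}, not semicircular: \eqref{t.mult.eq3} says its law is $\phi(x)\,dx$ with $\phi$ the standard normal density, and Remark~1.3 spells out that $T_g$ and $T_u$ are standard normal and standard uniform, classically independent. Your footnote asserts that the diagonal matrix $Y_N$ of i.i.d.\ Gaussians ``acts semicircularly'' because ``the rescaled diagonal Gaussians enter the trace only through non-crossing pairings.'' This is false. For a diagonal matrix one has $\tfrac1N\Tr(Y_N^{2m})=\tfrac1N\sum_i c_i^{2m}Z_i^{2m}$, so the relevant expectation is $\E[Z^{2m}]=(2m-1)!!$, the full Gaussian moment counting \emph{all} pair partitions. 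There is no genus suppression: the ``edges'' in the walk that hit the diagonal are loops at a single vertex, and Wick pairs them in every possible way with no penalty in the index count. Hence the limiting object is a classical Gaussian commuting with $T_u$, and it is the \emph{pair} $(T_g,T_u)$ that is free from $T_s$. This is precisely why $\nu$ has unbounded support (Theorem~\ref{t.lap.gen}) --- impossible if $T_g$ were semicircular and hence bounded.

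The paper deals with the resulting unboundedness by truncating $Z_i\mapsto Z_i\one(|Z_i|\le K)$, writing $Q_N=\alpha R_N^{1/2}U_NR_N^{1/2}$, applying Fact~\ref{fact:voiculescu} to the bounded triple $(R_N^{1/2},U_{NK},G_N)$ to get the joint limit $(T_r,T_g\one(|T_g|\le K),T_s)$, and then sending $K\to\infty$ via Fact~\ref{fact:HW}. Your moment-method outline could be repaired along these lines, but as written the combinatorial bookkeeping you propose for the diagonal part would deliver the wrong limiting measure.
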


\begin{remark}
{\rm The right-hand side of \eqref{t.mult.eq1} is the same as the free multiplicative convolution of the 
standard semicircle law and the law of $r(U)$, where $U$ is a standard uniform random variable.}
\end{remark}

\begin{remark}
{\rm The fact that $T_g$ and $T_u$ commute, together with \eqref{t.mult.eq3}, specifies their joint 
distribution. In fact, they are standard normal and standard uniform, respectively, independently of 
each other in the \emph{classical sense}. Free independence of $T_s$ and $(T_g,T_u)$, plus the 
fact that the former follows the standard semicircle law, specifies the joint distribution of $T_s,T_g,T_u$.}
\end{remark}

\begin{remark}
{\rm In order to admit the unbounded operator $T_g$, a $W^*$-probability space is needed. If all 
the operators would have been bounded, then a $C^*$-probability space would have sufficed.}
\end{remark}


\subsection{Randomization}
\label{subsec:random}

Theorem \ref{t.adj.gen} can be generalized to the situation where the function $f$ is random.  
Such a randomization helps us to address the applications listed in Section~\ref{sec:appl}. 
Suppose that $(\vep_N\colon\,N\ge 1)$ is a sequence of positive numbers satisfying 
\eqref{eq.conditions}. Suppose further that, for every $N \ge 1$, $(R_{Ni}\colon\,1\le i\le N)$ 
is a collection of non-negative random variables such that there is a deterministic $C<\infty$ 
for which
\begin{equation}
\label{random.eq1}
\sup_{N\ge1}\max_{1\le i\le N}R_{Ni}\le C\text{ a.s.}
\end{equation}
In addition, suppose that there is a probability measure $\mu_r$ on $\bbr$ such that
\begin{equation}
\label{random.eq.weak}
\lim_{N\to\infty}\frac1N\sum_{i=1}^N\delta_{R_{Ni}}=\mu_r\text{ weakly a.s.}
\end{equation}
The non-negativity of $R_{Ni}$ and \eqref{random.eq1} ensure that $\mu_r$ is concentrated 
on $[0,C]$. Furthermore, the first line of \eqref{eq.conditions} ensures that the additional
assumption
\begin{equation}
\label{random.new}
\sup_{N\ge1}\vep_N\le\frac1C
\end{equation}
entails no loss of generality.

For fixed $N$ and conditional on $(R_{N1},\ldots,R_{NN})$, the random graph $\bbg_N$ is 
constructed as before, except that there is an edge between $i$ and $j$ with probability 
$\vep_NR_{Ni}R_{Nj}$, which is at most $1$ by \eqref{random.new} for all $1\le i<j\le N$. 
In other words, $\bbg_N$ has two levels of randomness: one in the choice of $(R_{N1},\ldots,
R_{NN})$ and one in the choice of the set of edges. Once again, $A_N$ is the adjacency 
matrix of $\bbg_N$. The following is a \emph{randomized} version of Theorem \ref{t.adj.gen}. 

\begin{theorem}\label{t.random}
Under the assumptions \eqref{eq.conditions} and \eqref{random.eq1}--\eqref{random.eq.weak}, 
\[
\lim_{N\to\infty}\esd\left((N\vep_N)^{-1/2}A_N\right)=\mu_r\boxtimes\mu_s \quad\text{ weakly in probability}\,,
\]
where $\mu_s$ is the standard semicircle law. 
\end{theorem}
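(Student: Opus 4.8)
The plan is to reduce Theorem~\ref{t.random} to Theorem~\ref{t.mult} together with a conditioning argument. First I would observe that, conditionally on the array $(R_{Ni})$, the random graph $\bbg_N$ is precisely an inhomogeneous Erd\H{o}s-R\'enyi graph of the type considered in Theorem~\ref{t.adj.gen}, but with a \emph{random step-function} profile $f_N(x,y) = R_{N,\lceil Nx\rceil}R_{N,\lceil Ny\rceil}$ rather than a fixed continuous $f$. So the first task is to re-examine the proof of Theorem~\ref{t.adj.gen} (and of Theorem~\ref{t.mult} in the multiplicative case) and check that continuity of $f$ was used only for convenience: what is really needed is uniform boundedness, which is guaranteed by \eqref{random.eq1} and \eqref{random.new}, plus convergence of the empirical measure of the profile, which is guaranteed by \eqref{random.eq.weak}. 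In the multiplicative case the relevant ``profile'' is just the one-dimensional empirical measure $\tfrac1N\sum_i\delta_{R_{Ni}}$, and \eqref{random.eq.weak} says exactly that this converges to $\mu_r$ almost surely.

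Second, I would set up the moment-method / Stieltjes-transform machinery conditionally. Writing $L_N$ for the (random) ESD of $(N\vep_N)^{-1/2}A_N$, the goal is to show $L_N \to \mu_r\boxtimes\mu_s$ weakly in probability. Since the limit is deterministic, it suffices to show that for each fixed $z \in \bbc\setminus\bbr$ the Stieltjes transform $G_{L_N}(z) \to G_{\mu_r\boxtimes\mu_s}(z)$ in probability, or equivalently that each moment $\int x^k\,L_N(dx)$ converges in probability to the corresponding moment of $\mu_r\boxtimes\mu_s$ (the support being compact a.s.\ because $\|(N\vep_N)^{-1/2}A_N\|$ is bounded, by the boundedness of the profile and standard norm estimates for Wigner-type matrices). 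Conditioning on $\mathcal{R}=(R_{Ni}\colon N\ge1,1\le i\le N)$, Theorem~\ref{t.adj.gen} (in the form proved there, with the continuity hypothesis relaxed as above) applies on the event of probability one where \eqref{random.eq.weak} holds, and gives that, $P(\cdot\mid\mathcal R)$-almost surely, $L_N$ converges weakly to a deterministic measure $\mu^{\mathcal R}$; and Theorem~\ref{t.mult}, applied with the role of ``the law of $r(U)$'' now played by $\mu_r$, identifies $\mu^{\mathcal R}=\mu_r\boxtimes\mu_s$, which does not depend on $\mathcal R$. An application of the conditional dominated convergence theorem (or simply integrating the conditional convergence-in-probability statement against bounded test functions) then upgrades this to unconditional weak convergence in probability.

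Concretely, the cleanest route is: (i) Use Lemma~\ref{l.minor} conditionally to replace $A_N$ by the Gaussianized matrix $\bar A_N$ with variance profile $\tfrac1N R_{N,i}R_{N,j}$, so that the conditional ESD limit is unchanged; (ii) recognize $\bar A_N$ as $\diag(R_{Ni}/\sqrt N)\,W_N\,\diag(R_{Nj}/\sqrt N)$-type, i.e.\ a Wigner matrix conjugated by a diagonal matrix whose spectral measure converges to $\mu_r$, and invoke the asymptotic freeness of a Wigner matrix from a bounded deterministic diagonal matrix to conclude the conditional LSD is $\mu_r\boxtimes\mu_s$ (this is exactly the $r(x)=x$, $f(x,y)=xy$ instance of Theorem~\ref{t.mult} once one notes $r^{1/2}(T_u)T_sr^{1/2}(T_u)$ has law equal to the free multiplicative convolution, as recorded in the remark after Theorem~\ref{t.mult}); (iii) remove the conditioning. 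Step (iii) is routine: $\E\big[\,\big|\int g\,dL_N - \int g\,d(\mu_r\boxtimes\mu_s)\big|\big] = \E\big[\E[\,\cdot\mid\mathcal R]\big]\to 0$ by conditional convergence plus boundedness.

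The main obstacle I anticipate is Step (i)--(ii): justifying that the passage from the Bernoulli matrix to the Gaussian-variance-profile matrix, and the identification of the conditional limit, go through \emph{uniformly over the randomness $\mathcal R$} — i.e.\ that the estimates in Lemma~\ref{l.minor} and in the proof of Theorem~\ref{t.mult} depend on the profile only through its uniform bound $C$ and its limiting empirical measure, not through any continuity modulus. Since $f_N(x,y)=R_{N,\lceil Nx\rceil}R_{N,\lceil Ny\rceil}$ need not converge pointwise or uniformly to any continuous limit, one cannot literally quote Theorem~\ref{t.adj.gen}; instead one must re-run its proof with the step-function profile, checking that the combinatorial moment computations and the non-crossing-partition bookkeeping only see $\tfrac1N\sum_i\delta_{R_{Ni}}\to\mu_r$ and the uniform bound. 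Everything else — the compact-support/norm bound, the concentration giving convergence in probability rather than just in expectation (via, e.g., Azuma--Hoeffding on the edge-exposure martingale, which is insensitive to $\mathcal R$), and the final de-conditioning — is standard.
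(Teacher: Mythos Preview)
Your proposal is correct and follows essentially the same route as the paper: condition on $(R_{Ni})$, apply the centering and Gaussianization lemmas to reduce to $\Theta_N W_N^g\Theta_N/\sqrt N$ with $\Theta_N=\diag(\sqrt{R_{N1}},\ldots,\sqrt{R_{NN}})$, and then invoke Voiculescu's asymptotic freeness (Fact~\ref{fact:voiculescu}) together with the a.s.\ moment convergence $\tfrac1N\Tr(\Theta_N^{2k})\to\int x^k\,\mu_r(dx)$ coming from \eqref{random.eq1}--\eqref{random.eq.weak}. The paper's proof is simply more direct and does not frame things as ``re-running'' Theorems~\ref{t.adj.gen}--\ref{t.mult}; your anticipated obstacle about continuity of the profile is a non-issue precisely because only the uniform bound and the empirical convergence of the diagonal are ever used.
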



\section{Preparatory approximations}
\label{sec:preps}

The proofs of Theorems \ref{t.adj.gen}--\ref{t.mult} in Section \ref{sec:proofs} rely on several
preparatory approximations, which we organize in Lemmas~\ref{l.mc}--\ref{l.diag} below. Along
the way we need several basic facts, which we collect in Appendix~\ref{sec:facts}.
 

\subsection{Centering}
 
The first approximation is that the mean of each off-diagonal entry of $A_N$ and $\Delta_N$ 
can be subtracted, with negligible perturbation in the respective empirical spectral distributions.

\begin{lemma}
\label{l.mc}
Let $A^0_N$ and $\Delta^0_N$ be $N\times N$ matrices defined by
\begin{eqnarray*}
A^0_N(i,j) &=& (N\vep_N)^{-1/2}\left[A_N(i,j)-\E\left(A_N(i,j)\right)\right]\,,\\
\Delta^0_N(i,j) &=& (N\vep_N)^{-1/2}\left[\Delta_N(i,j)-\E\left(\Delta_N(i,j)\right)\right]\,,
\end{eqnarray*}
for all $1\le i,j\le N$. Then
\[
\begin{aligned}
&\lim_{N\to\infty} L\left(\esd(A^0_N),\esd((N\vep_N)^{-1/2}A_N)\right) = 0 
\quad \text{in probability}\,,\\
&\lim_{N\to\infty} L\left(\esd(\Delta^0_N),\esd((N\vep_N)^{-1/2}(\Delta_N-D_N))\right) = 0 
\quad \text{in probability}\,,
\end{aligned}
\]
where $L(\eta_1,\eta_2)$ denotes the L\'evy distance between the probability measures $\eta_1$ 
and $\eta_2$, and $D_N$ is the diagonal matrix defined in \eqref{DNdef}.
\end{lemma}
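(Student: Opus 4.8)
The plan is to control both L\'evy distances by a rank/Hilbert--Schmidt perturbation argument, using the classical inequalities collected in the appendix. For the adjacency matrix, note that $A^0_N$ and $(N\vep_N)^{-1/2}A_N$ differ by the deterministic matrix
\[
B_N(i,j) = (N\vep_N)^{-1/2}\,\E(A_N(i,j)) = N^{-1/2}\vep_N^{1/2}\,f\!\left(\tfrac iN,\tfrac jN\right)\one\{i\neq j\},
\]
whose entries are of order $N^{-1/2}\vep_N^{1/2}$. By the Hoffman--Wielandt-type bound (the inequality relating the L\'evy distance of two ESDs to the normalized Hilbert--Schmidt norm of the difference of the Hermitian matrices), I would estimate
\[
L\bigl(\esd(A^0_N),\esd((N\vep_N)^{-1/2}A_N)\bigr)^3 \le \frac1N \Tr(B_N^2)
= \frac1N\sum_{i\neq j} \frac{\vep_N}{N}\,f\!\left(\tfrac iN,\tfrac jN\right)^2 \le \vep_N\,\|f\|_\infty^2,
\]
which tends to $0$ by \eqref{eq.conditions}; in fact this gives convergence surely, not just in probability, so the adjacency case is essentially immediate.

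For the Laplacian the same subtraction of off-diagonal means contributes an identical negligible term, but one must be careful about the diagonal. By construction $\Delta^0_N(i,i) = (N\vep_N)^{-1/2}[\Delta_N(i,i) - \E(\Delta_N(i,i))]$, and $(N\vep_N)^{-1/2}(\Delta_N - D_N)$ has exactly the same diagonal, so on the diagonal the two matrices in the second display \emph{agree}, and the only discrepancy is again the off-diagonal matrix $B_N$ above. Hence the same Hilbert--Schmidt estimate applies verbatim, giving $L(\esd(\Delta^0_N),\esd((N\vep_N)^{-1/2}(\Delta_N-D_N)))^3 \le \vep_N\|f\|_\infty^2 \to 0$. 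The one point that needs a line of justification is that $B_N$ is well defined (i.e.\ $\vep_N f(\tfrac iN,\tfrac jN)\le 1$, so that the Bernoulli parameters make sense) for all large $N$, which is guaranteed by boundedness of $f$ and the first limit in \eqref{eq.conditions}; for the finitely many small $N$ the statement is vacuous since it is an $N\to\infty$ limit.

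There is no serious obstacle here: the lemma is a soft perturbation statement and the only mild subtlety is bookkeeping the diagonal entries of $\Delta_N$ correctly so that one sees the perturbation really is just the rank-full but Hilbert--Schmidt-small matrix $B_N$ and not something involving the diagonal degree fluctuations (those fluctuations are retained identically on both sides and therefore cancel). I would present the appendix fact being invoked explicitly, apply it once for $A_N$ and once for $\Delta_N - D_N$, and conclude. If one prefers to avoid even the cube-root form of the inequality, the bound $L(\cdot,\cdot)^2 \le$ (something)$\cdot \frac1N\Tr(B_N^2)$ from the relevant appendix fact works equally well, since the right-hand side still vanishes.
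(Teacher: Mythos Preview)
Your proposal is correct and essentially identical to the paper's own proof: both apply the Hoffman--Wielandt-type inequality $L^3(\esd(A),\esd(B))\le N^{-1}\Tr[(A-B)^2]$ to the deterministic off-diagonal perturbation $B_N$, obtain a bound of order $\vep_N$, and note that the Laplacian case is verbatim the same because the diagonals already agree. Your explicit bookkeeping of the diagonal in the Laplacian case and the remark that convergence is in fact sure (not merely in probability) are correct additional observations that the paper leaves implicit.
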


\begin{proof}
An appeal to Fact \ref{fact:HW} shows that 
\begin{eqnarray*}
&&L^3\left(\esd(A^0_N),\esd((N\vep_N)^{-1/2}A_N)\right)\\
&&\qquad \le \frac1{N^2\vep_N}\sum_{i=1}^N\sum_{j=1}^N\E^2\left(A_N(i,j)\right)\\
&&\qquad = \frac1{N^2\vep_N}\sum_{i=1}^N\sum_{j=1,\neq i}^N
\vep_N^2f^2\left(\frac iN,\frac jN\right)\\
&&\qquad = [1+o(1)]\,\vep_N\int_0^1\int_0^1f^2(x,y)\,dx\,dy\,, \qquad N\to\infty\,.
\end{eqnarray*}
The first claim follows by recalling that $\vep_N \downarrow 0$. The proof the second claim is verbatim 
the same.
\end{proof}


\subsection{Gaussianisation}

One of the crucial steps in studying the scaling properties of $\esd$ is to replace each entry by a 
Gaussian random variable.

\begin{lemma}
\label{l.gauss}
Let $(G_{i,j}\colon\,1\le i\le j)$ be a family of i.i.d.\ standard Gaussian random variables. Define 
$N \times N$ matrices $A_N^g$ and $\Delta_N^g$ by
\begin{eqnarray}
\label{l.gauss.eq1}
\qquad A_N^g(i,j) &=&
\begin{cases}
\sqrt{\frac1Nf\left(\frac iN,\frac jN\right)\left(1-\vep_Nf\left(\frac iN,\frac jN\right)\right)}
G_{i\wedge j,i\vee j},&i\neq j\,,\\
0,&i=j\,,
\end{cases}\\
\label{l.gauss.eq2}
\qquad \Delta_N^g(i,j)
&=&\begin{cases}
A_N^g(i,j), &i\neq j\,,\\
-\sum_{ k\in\{1,\ldots,N\}\setminus\{i\} }A_N^g(i,k), &i=j\,.
\end{cases}
\end{eqnarray}
Fix $z\in\bbc\setminus\bbr$ and a three times continuously differentiable function $h\colon\,
\bbr\to\bbr$ such that
\[
\max_{0\le j\le3}\,\sup_{x\in\bbr} |h^{(j)}(x)|<\infty\,.
\]
For an $N\times N$ real symmetric matrix $M$, define
\[
H_N(M)=\frac1N\Tr\left((M-zI_N)^{-1}\right)\,,
\]
where $I_N$ is the identity matrix of order $N$. Then
\begin{eqnarray}
\label{l.gauss.claim1}
\lim_{N\to\infty}
\E\left[h\left(\Re H_N(A_N^g)\right)-h\left(\Re H_N(A_N^0)\right)\right] &=& 0\,,\\
\label{l.gauss.claim2}
\lim_{N\to\infty}
\E\left[h\left(\Im H_N(A_N^g)\right)-h\left(\Im H_N(A_N^0)\right)\right] &=& 0\,,
\end{eqnarray}
and
\begin{eqnarray}
\label{l.gauss.claim3}
\lim_{N\to\infty}
\E\left[h\left(\Re H_N(\Delta_N^g)\right)-h\left(\Re H_N(\Delta_N^0)\right)\right] &=& 0\,,\\
\label{l.gauss.claim4}
\lim_{N\to\infty}
\E\left[h\left(\Im H_N(\Delta_N^g)\right)-h\left(\Im H_N(\Delta_N^0)\right)\right] &=& 0\,,
\end{eqnarray}
where $\Re$ and $\Im$ denote the real and the imaginary part of a complex number, 
respectively.
\end{lemma}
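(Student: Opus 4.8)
The statement is a Lindeberg-type replacement argument: the off-diagonal entries of $A_N^0$ are centered Bernoulli variables rescaled by $(N\vep_N)^{-1/2}$, and $A_N^g$ (resp.\ $\Delta_N^g$) replaces each such entry by a centered Gaussian with exactly the same variance, namely $\tfrac1N f(\tfrac iN,\tfrac jN)(1-\vep_N f(\tfrac iN,\tfrac jN))$. The plan is to run the standard Lindeberg swapping scheme entry by entry, using the smoothness of $z\mapsto H_N(M)$ as a function of the matrix entries together with the hypothesis that $h$ has bounded derivatives up to order three. Concretely, I would fix an enumeration of the upper-triangular index pairs and, for each pair, interpolate between the Bernoulli and Gaussian versions, keeping all previously-swapped pairs Gaussian and all not-yet-swapped pairs Bernoulli. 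Writing $\psi = h\circ\Re H_N$ (and separately $h\circ\Im H_N$), a third-order Taylor expansion of $\psi$ in the single scalar variable being swapped gives, because the first two moments of the Bernoulli and Gaussian variables match, that the increment at each step is controlled by $\|\psi^{(3)}\|_\infty$ times the third absolute moments of the two variables. The third moment of the $(i,j)$ entry is of order $(N\vep_N)^{-3/2}(\vep_N f)$ for the Bernoulli case and of order $N^{-3/2}$ for the Gaussian case, so summing over the $O(N^2)$ pairs yields a bound of order $N^2 \cdot (N\vep_N)^{-3/2}\vep_N = (N\vep_N)^{-1/2} \to 0$, using the second condition in \eqref{eq.conditions}. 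Taking expectations throughout gives \eqref{l.gauss.claim1}--\eqref{l.gauss.claim2}, and the Laplacian case \eqref{l.gauss.claim3}--\eqref{l.gauss.claim4} follows the same scheme with the only change that swapping one off-diagonal entry $(i,j)$ simultaneously perturbs four matrix entries (the two symmetric off-diagonal ones and the two diagonal corrections), which only inflates the constants.

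\textbf{The derivative bounds.} The technical core is the estimate on the third derivative of $M\mapsto h(\Re H_N(M))$ with respect to a single entry $M(i,j)=M(j,i)$. Using the resolvent identity $\partial_{M(i,j)} (M-zI)^{-1} = -(M-zI)^{-1} E_{ij}(M-zI)^{-1}$ (with $E_{ij}$ the symmetric unit matrix) and $\|(M-zI)^{-1}\|_{\mathrm{op}} \le 1/|\Im z|$, one gets that each differentiation of $H_N(M) = \tfrac1N\Tr((M-zI)^{-1})$ costs a factor of order $1/N$ times a bounded power of $1/|\Im z|$; more precisely $|\partial_{M(i,j)}^k H_N(M)| \le C_k(z)/N$ for $k=1,2,3$ uniformly in $M$ (the $1/N$ persists because of the normalizing trace). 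Combining with $\|h'\|_\infty,\|h''\|_\infty,\|h'''\|_\infty<\infty$ via the chain rule, and noting that $|\Re H_N|$ and all its derivatives are bounded by the corresponding bounds on $H_N$, I obtain $|\partial_{M(i,j)}^3 (h\circ\Re H_N)| \le C(z)/N$. Plugging this into the Lindeberg increment at step $(i,j)$ gives a contribution bounded by $C(z) N^{-1} \bigl(\E|A_N^0(i,j)|^3 + \E|A_N^g(i,j)|^3\bigr)$, and summing over pairs produces the $(N\vep_N)^{-1/2}$ bound claimed above. Since $z$ is fixed throughout, $C(z)$ is a harmless constant.

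\textbf{Main obstacle.} I expect the only genuinely delicate point to be the bookkeeping in the Laplacian case: replacing the single scalar $A_N^g(i,j)$ affects $\Delta_N^g$ in the $(i,i)$, $(j,j)$, $(i,j)$ and $(j,i)$ positions through a \emph{linear} map, so the "single-variable" Taylor expansion must be carried out along the line $t\mapsto \Delta_N(\text{others fixed};\ \text{entry}=t)$ inside the four-dimensional affine subspace this traces out. This is still a one-parameter family, so the one-variable Taylor argument applies verbatim once one checks that the directional third derivative of $h\circ\Re H_N$ along this line is still $O(1/N)$ — which follows from the operator-norm resolvent bound because the perturbation direction has operator norm $O(1)$ (indeed it is $E_{ij}-E_{ii}-E_{jj}$ up to the sign convention, a fixed matrix of norm at most $2$). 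A secondary, purely cosmetic subtlety is that the true variance $\vep_N f(1-\vep_N f)$ has a $\vep_N f$ correction relative to the naive $\vep_N f$; this is exactly matched in the definition of $A_N^g$, so no mismatch of second moments arises and the second-order Taylor terms cancel cleanly in expectation. With these points addressed, all four limits in \eqref{l.gauss.claim1}--\eqref{l.gauss.claim4} follow at once.
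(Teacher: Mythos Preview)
Your proposal is correct and follows essentially the same approach as the paper: a Lindeberg replacement argument leveraging that $A_N^0(i,j)$ and $A_N^g(i,j)$ share their first two moments, combined with the resolvent-derivative bounds $|\partial^k_{M(i,j)} H_N(M)|\le C_k(z)/N$. The paper packages the swap by invoking Chatterjee's invariance principle (with a truncation level $K$ separating a $\lambda_2$-controlled tail part from a $\lambda_3$-controlled bulk part), whereas you carry out the classical third-order Taylor swap directly; since both the centered Bernoulli and the Gaussian have finite third absolute moments here, your direct route avoids the truncation bookkeeping at no cost, and your treatment of the Laplacian case via the one-parameter directional derivative along $E_{ij}+E_{ji}-E_{ii}-E_{jj}$ is exactly how the paper's $\partial\Delta/\partial x_{ij}$ enters.
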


\begin{proof}
We only prove \eqref{l.gauss.claim3}. The proofs of the other claims are similar. We use ideas 
from \cite{chatterjee:2005}. Let $z=u+iv\in \mathbb C^{+}$ and $n=N(N-1)/2$. Define $\phi\colon\,
\bbr^n\to \mathbb C$ as
\begin{equation}
\phi(x)= H_N(\Delta(x))
\end{equation}
where $\Delta(x)$ is the $N \times N$ symmetric Laplacian matrix given by
\[
\Delta(x)(i,j)= \begin{cases}
-\sum_{k=1, k\neq i}^N x_{i,k}  & i=j\\
x_{i\wedge j,i\vee j} &  i\neq j .
\end{cases}
\]
Note that $\partial \Delta(x)/\partial x_{ij}$ is the $N \times N$ matrix that has $-1$ at the $i$-th and 
$j$-th diagonal and $1$ at $(i,j)$-th and $(j,i)$-th entry. The following identities were derived in 
\cite[Section 2]{chatterjee:2005}:
\begin{align}
\nonumber
\frac{\partial \phi}{\partial x_{i,j}}
&= -N^{-1} \Tr\left( \frac{\partial \Delta}{\partial x_{i,j}} K^2\right),\\
\frac{\partial^2 \phi}{\partial x_{i,j}^2}
&= 2 N^{-1} \Tr\left(  \frac{\partial \Delta}{\partial x_{i,j}} 
K\frac{\partial \Delta}{\partial x_{i,j}} K^2\right),\\ \nonumber
\frac{\partial^3 \phi}{\partial x_{i,j}^3}
&= -6 N^{-1} \Tr\left(\frac{\partial \Delta}{\partial x_{i,j}} K\frac{\partial \Delta}
{\partial x_{i,j}} K\frac{\partial \Delta}{\partial x_{i,j}} K^2\right),
\end{align}
where $K(x)= (\Delta(x)-zI)^{-1}$. Now using these identities we get
\[ 
\Big{\|}\frac{\partial \phi}{\partial x_{ij}}\Big{\|}_{\infty} \le \frac{4}{|\Im z|^2}\frac1N,\,\, 
\Big{\|}\frac{\partial^2 \phi}{\partial x_{ij}^2}\Big{\|}_{\infty} \le \frac{8}{|\Im z|^3} 
\frac1N,\,\,\,  \Big{\|}\frac{\partial^3 \phi}{\partial x_{ij}^3}\Big{\|}_{\infty} \le 
\frac{48}{|\Im z|^4} \frac1N\,.
\]
If we define 
\begin{align*}
\lambda_2(\phi)&=\sup\left\{\Big{\|} \frac{\partial \phi}{\partial x_{i,j}}\Big{\|}_\infty^2, \,\,\,
\Big{\|} \frac{\partial^2 \phi}{\partial x_{i,j}^2}\Big{\|}_{\infty}\right\}\,,\\
\lambda_3(\phi)& = \sup\left\{\Big{\|} \frac{\partial \phi}{\partial x_{i,j}}\Big{\|}_\infty^3,
\Big{\|} \frac{\partial^2 \phi}{\partial x_{i,j}^2}\Big{\|}_{\infty}^2, \Big{\|}\frac{\partial^3 \phi}
{\partial x_{i,j}^3}\Big{\|}_\infty\right\}\,,
\end{align*}
then there exists constants $C_2$ and $C_3$ depending on $\Im z$ such that $\lambda_2(\phi)
\le C_2 N^{-1}$ and $\lambda_3(\phi)\le C_3N^{-1}$. Hence, using $\lambda_r(\Re \phi)\le 
\lambda_r(\phi)$ and 
\begin{equation}
\label{UVdef2}
U=\Re \left(H_N(\Delta_N^0)\right), \qquad V= \Im\left( H_N(\Delta_N^g)\right)\,,
\end{equation} 
we have from \cite[Theorem 1.1]{chatterjee:2005} 
\begin{align}
&\left|\E[h(U)]- \E[h(V)]\right|\nonumber\\
&\qquad \le C_1(h)\lambda_2(\phi)\sum_{1\le i\neq j\le N} 
\left(\E[ A_N^0(i,j)^2I(| A_N^0(i,j)|> K)\right.\nonumber\\
&\qquad\qquad +\left.\E[ A_N^g(i,j)^2I(| A_N^g(i,j)|> K)\right)\nonumber\\
&\qquad\qquad + C_2(h)\frac{\lambda_3(\phi)}{ (N\vep_N)^{3/2}}
\sum_{i\neq j}\left(\E[ A_N^0(i,j)^3I(| C_N(i,j)|> k)\right.\nonumber\\
&\qquad\qquad +\left.\E[ (A_N^g(i,j)^3 I(|A_N^g(i,j)|> k)\right)\,.
\label{eq:errorinv}
\end{align}
Using the fact that $\vep_N \downarrow 0$, we have that $\E[ A_N^0(i,j)^4] = \mathrm{O}
(N^{-2}\vep_N^{-1})$. Also
\[
P( |A_N^0(i,j)|>K )\le \mathrm{O}(N^{-1}).
\]
So, by the Cauchy-Schwartz inequality and the above bounds, we have
\[
\E[ A_N^0(i,j)^2I(| A_N^0(i,j)|> K)\le  \mathrm{O}\left(\vep_N^{-1/2} N^{-3/2}\right).
\]
Since $N\vep_N\to \infty$, we have
\[
\lambda_2(\phi)\sum_{1\le i\neq j\le N} \E[ A_N^0(i,j)^2I(| A_N^0(i,j)|> K)
\le C N^{-1/2} \vep_N^{-1/2}\to 0\,,\quad N\to\infty\,.
\]
Similarly, we have
\[
\lambda_3(\phi)\sum_{i\neq j}\E[ A_N^0(i,j)^3I(| A_N^0(i,j)|> K)\le \frac{C}{N^{5/2} 
\vep_N^{3/2}} N^2 \vep_N\to 0\,,\quad N\to\infty\,.
\]
Using Gaussian tail bounds, we can also show that the other two terms in \eqref{eq:errorinv} go 
to $0$, which settles \eqref{l.gauss.claim3}. A similar computation can be done for the imaginary 
part in \eqref{UVdef2}, which proves \eqref{l.gauss.claim4}. The proofs of \eqref{l.gauss.claim1} 
and \eqref{l.gauss.claim2} are analogous (and, in fact, closer to the argument in \cite{chatterjee:2005}).
\end{proof}


\subsection{Leading order variance}

Next, we show that another minor tweak to the entries of $A_N^g$ and $\Delta_N^g$ results in 
a negligible perturbation.

\begin{lemma}\label{l.minor}
Define an $N\times N$ matrix $A_N$ by
\begin{eqnarray}
\label{eq.defANglobal}\bar A_N(i,j)
&=& \sqrt{\frac1Nf\left(\frac iN,\frac jN\right)}\,G_{i\wedge j,i\vee j},\qquad 1\le i,j\le N\,,
\end{eqnarray}
and let 
\[
\bar\Delta_N=\bar A_N-X_N\,,
\]
where $X_N$ is a diagonal matrix of order $N$, defined by
\[
X_N(i,i)=\sum_{1\le k\le N,k\neq i}\bar A_N(i,k),\qquad 1\le i\le N\,.
\]
Then
\begin{eqnarray}
\label{l.minor.claim1}
\lim_{N\to\infty} L\left(\esd(A^g_N),\esd(\bar A_N)\right) &=& 0 \quad \text{in probability}\,,\\
\label{l.minor.claim2}
\lim_{N\to\infty} L\left(\esd(\Delta^g_N),\esd(\bar\Delta_N)\right) &=& 0 \quad \text{in probability}\,.
\end{eqnarray}
\end{lemma}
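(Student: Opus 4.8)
The plan is to compare the Gaussianised matrices $A_N^g, \Delta_N^g$ with the ``leading order'' matrices $\bar A_N, \bar\Delta_N$ by showing the difference has vanishing normalised Hilbert--Schmidt norm, then invoke a standard perturbation inequality (of the same Hoffman--Wielandt/rank type already cited as Fact~\ref{fact:HW}, controlling the cube of the L\'evy distance by the average of squared entry differences). The key observation is that the only difference between the off-diagonal entries of $A_N^g$ and $\bar A_N$ is the factor $\sqrt{1-\vep_N f(i/N,j/N)}$ versus $1$; since $f$ is bounded, say by $\|f\|_\infty$, we have $1 - \sqrt{1-\vep_N f} \le \vep_N\|f\|_\infty$, so
\[
\bar A_N(i,j) - A_N^g(i,j) = \sqrt{\tfrac1N f(\tfrac iN,\tfrac jN)}\,\Bigl(1 - \sqrt{1-\vep_N f(\tfrac iN,\tfrac jN)}\,\Bigr)\,G_{i\wedge j,i\vee j},
\]
whose variance is $O(N^{-1}\vep_N^2)$. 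Summing over the $\sim N^2/2$ off-diagonal pairs gives an expected squared Hilbert--Schmidt norm of order $N\vep_N^2$, and dividing by $N$ (the normalisation in $L^3 \le \frac1N \sum (\cdot)^2$) gives $O(\vep_N^2) \to 0$; Markov's inequality then upgrades this to convergence in probability, proving \eqref{l.minor.claim1}.

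For \eqref{l.minor.claim2} I would proceed in two stages. First, the off-diagonal part of $\Delta_N^g - \bar\Delta_N$ is exactly the off-diagonal part of $A_N^g - \bar A_N$, already handled. Second, the diagonal entries: $\Delta_N^g(i,i) = -\sum_{k\neq i} A_N^g(i,k)$ while $\bar\Delta_N(i,i) = -X_N(i,i) = -\sum_{k\neq i}\bar A_N(i,k)$, so the diagonal difference is $-\sum_{k\neq i}\bigl(A_N^g(i,k) - \bar A_N(i,k)\bigr)$, a sum of $N-1$ independent centred Gaussians each of variance $O(N^{-1}\vep_N^2)$, hence itself Gaussian with variance $O(\vep_N^2)$. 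The contribution of the $N$ diagonal terms to the squared Hilbert--Schmidt norm is thus $O(N\vep_N^2)$ in expectation, and after dividing by $N$ we again get $O(\vep_N^2)\to0$. Combining the two stages and applying Fact~\ref{fact:HW} together with Markov's inequality yields \eqref{l.minor.claim2}.

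The only genuinely delicate point is the diagonal estimate in the Laplacian case: a priori a single diagonal entry of $\bar\Delta_N$ or $\Delta_N^g$ is of order $\sqrt{N}\cdot\sqrt{1/N} = O(1)$ (indeed Theorem~\ref{t.lap.gen} tells us the Laplacian spectrum is unbounded), so one cannot afford any crude bound on the entries themselves --- one must use that we are comparing two matrices whose diagonals differ only through the small multiplicative factor $1-\sqrt{1-\vep_N f}$, so that the \emph{difference} of diagonal entries, being a sum of $N-1$ independent Gaussians of variance $O(N^{-1}\vep_N^2)$, has variance $O(\vep_N^2)$ rather than $O(1)$. This cancellation is what makes the Hilbert--Schmidt bound work, and it is the step I would write out most carefully; everything else is a routine application of the perturbation inequality already recorded as Fact~\ref{fact:HW}.
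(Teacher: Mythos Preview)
Your approach is exactly the paper's: apply Fact~\ref{fact:HW} to bound $L^3$ by $N^{-1}\E\Tr[(\cdot)^2]$, then estimate the off-diagonal entrywise differences via $1-\sqrt{1-\vep_N f}=O(\vep_N)$ and, for the Laplacian, control the diagonal difference as a sum of $N-1$ independent Gaussians of variance $O(N^{-1}\vep_N^2)$.

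There is one small omission. The definition \eqref{eq.defANglobal} is for \emph{all} $1\le i,j\le N$, so $\bar A_N(i,i)=\sqrt{N^{-1}f(i/N,i/N)}\,G_{i,i}$ is in general nonzero, whereas $A_N^g(i,i)=0$ by \eqref{l.gauss.eq1}. Hence $\bar A_N-A_N^g$ has a diagonal contribution you did not account for in \eqref{l.minor.claim1}, and likewise $\bar\Delta_N(i,i)=\bar A_N(i,i)-X_N(i,i)$, not $-X_N(i,i)$ as you wrote, so your diagonal difference in \eqref{l.minor.claim2} is missing the term $-\bar A_N(i,i)$. This extra piece is harmless: it is independent of the off-diagonal Gaussians and contributes $N^{-1}\sum_{i=1}^N\Var(\bar A_N(i,i))=N^{-2}\sum_{i=1}^N f(i/N,i/N)=O(N^{-1})$ to the bound, which still vanishes. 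The paper records precisely this term as $N^{-2}\sum_i f(i/N,i/N)$ in its computation; once you add it, your argument is complete and matches the paper's.
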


\begin{proof}
To prove \eqref{l.minor.claim2}, yet another application of Fact \ref{fact:HW} implies that
\begin{eqnarray*}
&&\E\left[L^3\left(\esd(\Delta^g_N),\esd(\bar\Delta_N)\right)\right]\\
&&\quad \le \frac1N\E\left(\Tr\left[\left(\Delta_N^g-\bar\Delta_N\right)^2\right]\right)\\
&&\quad = \frac1N{\sum\sum}_{1\le i\neq j\le N}\Var\left(\bar A_N(i,j)-A_N^g(i,j)\right)\\
&&\qquad+\frac1N\sum_{i=1}^N\Var\left(\sum_{j\in\{1,\ldots,N\}\setminus\{i\}}
\left(\bar A_N(i,j)-A_N^g(i,j)\right)\right)+\frac1{N^2}\sum_{i=1}^Nf\left(\frac iN,\frac iN\right)\\
&&\quad =\frac4{N^2}{\sum\sum}_{1\le i<j\le N}f\left(\frac iN,\frac jN\right)
\left(1-\sqrt{1-\vep_Nf\left(\frac iN,\frac jN\right)}\right)^2\\
&&\quad\qquad +\frac1{N^2}\sum_{i=1}^Nf\left(\frac iN,\frac iN\right)\\
&&\quad \to0\,, \qquad N \to \infty\,,
\end{eqnarray*}
because $f$ is bounded. Thus, \eqref{l.minor.claim2} follows. The proof of \eqref{l.minor.claim1} 
is similar.
\end{proof}


\subsection{Decoupling} 

The (diagonal) entries of $X_N$ are nothing but the row sums of $\bar A_N$. However, the 
correlation between an entry of $\bar A_N$ and that of $X_N$ is small. The following decoupling 
lemma shows that it does not hurt when the entries of $X_N$ are replaced by a mean-zero 
Gaussian random variable of the same variance that is independent of $\bar A_N$. 

\begin{lemma}
\label{l.decouple}
Let $(Z_i\colon\,i \ge 1)$ be a family of i.i.d.\ standard normal random variables, independent 
of $(G_{i,j}\colon\,1\le i \le j)$. Define a diagonal matrix $Y_N$ of order $N$ by
\[
Y_N(i,i)=Z_i\,\sqrt{\frac1N\sum_{1\le j\le N,\,j\neq i}f\left(\frac iN,\frac jN\right)}\,,
\qquad 1\le i\le N\,,
\]
and let
\[
\tilde\Delta_N=\bar A_N+Y_N\,.
\]
Then, for every $k\in\bbn$,
\begin{equation}
\label{l.decouple.claim1}
\lim_{N\to\infty}\frac1N\E\left(\Tr\left[(\tilde\Delta_N)^{2k}-(\bar\Delta_N)^{2k}\right]\right)=0\,,
\end{equation}
and
\begin{equation}
\label{l.decouple.claim2}
\lim_{N\to\infty}\frac1{N^2}\E\left(\Tr^2\left[(\tilde\Delta_N)^{k}\right]
-\Tr^2\left[(\bar\Delta_N)^{k}\right]\right)=0\,.
\end{equation}
\end{lemma}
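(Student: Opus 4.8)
The plan is to prove the two moment comparisons by expanding the traces as sums over closed walks in the complete graph on $\{1,\ldots,N\}$, and then showing that replacing the correlated row-sum diagonal $X_N$ by the independent Gaussian diagonal $Y_N$ changes each surviving term only by a lower-order amount. Write $\bar\Delta_N = \bar A_N - X_N$ and $\tilde\Delta_N = \bar A_N + Y_N$. The off-diagonal part $\bar A_N$ is common to both; the only difference is in the diagonal, and both $X_N(i,i) = \sum_{k\neq i}\bar A_N(i,k)$ and $Y_N(i,i) = Z_i\,\sigma_{N,i}$ (where $\sigma_{N,i}^2 = \tfrac1N\sum_{k\neq i} f(\tfrac iN,\tfrac kN)$) are centered Gaussians with \emph{exactly the same variance} $\sigma_{N,i}^2$, which is bounded uniformly in $i,N$ since $f$ is bounded. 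The key structural fact is that $-X_N(i,i)$ and $Y_N(i,i)$ have the same first and second moments; they differ only in how they couple to the off-diagonal entries $\bar A_N(\cdot,\cdot)$.

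First I would set up the walk expansion for \eqref{l.decouple.claim1}: $\tfrac1N \E\,\Tr[(\tilde\Delta_N)^{2k}] = \tfrac1N \sum \E[\tilde\Delta_N(i_1,i_2)\tilde\Delta_N(i_2,i_3)\cdots\tilde\Delta_N(i_{2k},i_1)]$ over all $(i_1,\ldots,i_{2k})$, and similarly for $\bar\Delta_N$. Split each factor into its off-diagonal part and its diagonal part; expanding the product gives a sum over which positions along the walk are ``diagonal steps''. For the $\bar\Delta_N$ side, a diagonal step at position $\ell$ (necessarily $i_\ell = i_{\ell+1}$) contributes a factor $-X_N(i_\ell,i_\ell) = -\sum_{m\neq i_\ell}\bar A_N(i_\ell,m)$, introducing a fresh summation index $m$; for the $\tilde\Delta_N$ side it contributes $Z_{i_\ell}\sigma_{N,i_\ell}$, which is independent of everything else. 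I would then compute the expectation via Wick/Isserlis, pairing up all the Gaussian factors. The crucial observation is: any term in the $\bar\Delta_N$ expansion in which a diagonal-step factor $\bar A_N(i_\ell,m)$ pairs with an \emph{off-diagonal} factor $\bar A_N(i_a,i_{a+1})$ of the walk forces $\{i_\ell,m\} = \{i_a,i_{a+1}\}$, which ties the auxiliary index $m$ to an existing index and thereby \emph{reduces} the number of free summation indices by one compared to the generic count; such terms are $O(1/(N\cdot\text{something}))$ smaller and vanish in the limit. The terms that survive to leading order are exactly those in which every diagonal-step $\bar A_N(i_\ell,m)$ pairs with another diagonal-step factor $\bar A_N(i_{\ell'},m')$ (so $i_\ell = i_{\ell'}$ and $m = m'$, and $m$ ranges freely over $N$ values) — and these are precisely matched, term by term and with the same sign (two factors of $-1$ cancel), by the Wick pairings of the corresponding $Z_{i_\ell}\sigma_{N,i_\ell}$, $Z_{i_{\ell'}}\sigma_{N,i_{\ell'}}$ factors, because $\E[Z_{i_\ell}Z_{i_{\ell'}}]\sigma_{N,i_\ell}\sigma_{N,i_{\ell'}} = \delta_{i_\ell,i_{\ell'}}\sigma_{N,i_\ell}^2 = \delta_{i_\ell,i_{\ell'}}\cdot\tfrac1N\sum_{m}f(\tfrac{i_\ell}{N},\tfrac mN)$, reproducing exactly the sum over the free index $m$ with the same weight $f$. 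Hence the leading-order contributions agree and the difference tends to $0$.

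For \eqref{l.decouple.claim2} I would run the same expansion on the product of two traces, $\tfrac1{N^2}\E[\Tr[(\tilde\Delta_N)^k]\,\Tr[(\bar\Delta_N)^k]]$ — wait, rather $\tfrac1{N^2}(\E\,\Tr^2[\tilde\Delta_N^k] - \E\,\Tr^2[\bar\Delta_N^k])$ — now as a sum over \emph{pairs} of closed walks. The same dichotomy applies: Wick pairings either stay within the bookkeeping that both models share, or they involve a diagonal-step auxiliary index pairing with an off-diagonal walk edge (of either walk), which again loses a free index and is negligible; and the ``diagonal pairs with diagonal'' pairings again match across the two models with identical weights because of the variance identity. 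One should note that the odd-power/trace-squared case does not automatically vanish the way a single odd trace does, which is why the statement is phrased with $\Tr^2$ rather than $\Tr$ — but this causes no difficulty in the comparison argument, since we are subtracting two quantities with the same combinatorial skeleton. The normalization $1/N^2$ (versus $1/N$) is exactly what makes pair-of-walks terms with the generic number of free indices $O(1)$, so the same ``one lost index $\Rightarrow$ negligible'' principle does the job.

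The main obstacle I anticipate is the bookkeeping for cross-pairings in the $X_N$ expansion: a single diagonal step spawns an auxiliary edge $\bar A_N(i_\ell,m)$ that can in principle pair with (i) another auxiliary edge from a different diagonal step, (ii) an off-diagonal edge of the same walk, or (iii) in the two-walk case, an edge of the other walk — and one must verify carefully that only type (i) with matching endpoints survives the $N$-counting, and that the count of free indices in those surviving terms matches the $Y_N$ side. This is the place where boundedness and continuity of $f$ (to control $\sigma_{N,i}^2$ uniformly and to replace Riemann sums by the integral) and the constraint $N\vep_N\to\infty$ enter; everything else is a routine, if somewhat lengthy, Wick-calculus estimate of the kind already used in Lemmas~\ref{l.mc}--\ref{l.minor}. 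An alternative, slicker route — which I would mention but probably not pursue in detail — is to interpolate: define $\Delta_N(t)$ with diagonal $-(1-t)^{1/2}X_N + t^{1/2}(\text{independent copy})$ plus a correction keeping the variance fixed, differentiate the moment in $t$, and bound the derivative using that the correlation $\Cov(\bar A_N(i,j), X_N(\cdot,\cdot))$ is $O(1/N)$ per entry; but the direct Wick expansion is more transparent and matches the paper's style.
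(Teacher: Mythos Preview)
Your approach is correct in outline, but it is genuinely different from the paper's. You propose a direct Wick/walk expansion for the general $f$, classifying pairings of the auxiliary ``diagonal-step'' edges $\bar A_N(i_\ell,m)$ into those that pair with another auxiliary edge (surviving, and matching the $Y_N$ contribution because $\E[Z_{i_\ell}^2]\sigma_{N,i_\ell}^2=\tfrac1N\sum_m f(\tfrac{i_\ell}{N},\tfrac mN)$) versus those that pair with a walk edge (costing a free index, hence negligible). This is exactly the combinatorics underlying \cite[Lemma~4.12]{bryc:dembo:jiang:2006}, so you are in effect reproving that lemma in the variance-profile setting. The paper instead \emph{reduces} to the constant case $f\equiv1$: it introduces auxiliary matrices $\bar M_N,\tilde M_N$ (the $f\equiv1$ versions of $\bar\Delta_N,\tilde\Delta_N$), invokes \cite[Lemma~4.12]{bryc:dembo:jiang:2006} as a black box to get $N^{-1}|\psi(\Pi,N)|\,\#\Psi(\Pi,N)\to0$ for each partition $\Pi$, and then transfers this to general $f$ by observing that the covariance ratios $\eta_N(i,j,i',j')=\Cov(\bar\Delta_N(i,j),\bar\Delta_N(i',j'))/\Cov(\bar M_N(i,j),\bar M_N(i',j'))$ are bounded by $1$ in absolute value (after normalizing $f\le1$), so that Wick's formula gives the $\bar\Delta_N/\tilde\Delta_N$ difference as $\xi(i,N)\psi(\Pi,N)$ with $|\xi|\le1$. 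Your route is more self-contained and arguably more transparent about \emph{why} the decoupling works; the paper's route is shorter and avoids redoing the index counting. Two small corrections: neither $N\vep_N\to\infty$ nor continuity of $f$ enters here (we are already past the Gaussianization of Lemma~\ref{l.gauss}, and only boundedness of $f$ is used); and within your ``type~(i)'' pairings you should also discard the sub-case $\{i_\ell,m\}=\{i_{\ell'},m'\}$ with $i_\ell=m'$, $m=i_{\ell'}$, which likewise ties $m$ to an existing index and is lower order.
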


\begin{proof} 
Without loss of generality we may assume that $f \le1$. For $N \ge 1$, define the $N\times N$ 
matrices $\bar M_N$ and $\tilde M_N$ by
\[
\bar M_N(i,j)=
\begin{cases}
N^{-1/2}G_{i\wedge j,i\vee j},&i\neq j\,,\\
N^{-1/2}G_{i,i}-\sum_{k=1,\,k\neq i}^N\bar M_N(i,k),&i=j\,,
\end{cases}
\]
and 
\[
\tilde M_N(i,j)=
\begin{cases}
\bar M_N(i,j),&i\neq j\,,\\
N^{-1/2}G_{i,i}+Z_i\,\sqrt{\frac {N-1}N},&i=j\,.
\end{cases}
\]
Note that, in the special case where $f$ is identically $1$, $\bar M_N$ and $\tilde M_N$ are identical 
to $\bar\Delta_N$ and $\tilde\Delta_N$, respectively. For $k\in\bbn$ and $\Pi$ a partition of 
$\{1,\ldots,2k\}$, let
\begin{eqnarray}
\label{eq.defpsi}
\Psi(\Pi,N) &=& \Bigl\{i\in\{1,\ldots,N\}^{2k}\colon\,i_u=i_v\\
\nonumber&&\,\,\,\,\,\,\,\,\,\,\iff u,v\text{ belong to the same block of }\Pi\Bigr\}\,.
\end{eqnarray}
For fixed $\Pi$ and $N$, an immediate application of Wick's formula shows that, for all $i,j\in\Psi(\Pi,N)$,
\[
\E\left(\prod_{u=1}^{2k}\bar M_N(i_u,i_{u+1})\right)
= \E\left(\prod_{u=1}^{2k}\bar M_N(j_u,j_{u+1})\right)\,,
\]
with the convention that $i_{2k+1}\equiv i_1$, and
\[
\E\left(\prod_{u=1}^{2k}\tilde M_N(i_u,i_{u+1})\right)
= \E\left(\prod_{u=1}^{2k}\tilde M_N(j_u,j_{u+1})\right)\,,
\]
Therefore, for any $i\in\Psi(\Pi,N)$, we can unambiguously define
\[
\psi(\Pi,N)=\E\left(\prod_{u=1}^{2k}\bar M_N(i_u,i_{u+1})\right)
-\E\left(\prod_{u=1}^{2k}\tilde M_N(i_u,i_{u+1})\right)\,.
\]
As shown in \cite[Lemma 4.12]{bryc:dembo:jiang:2006}, for a fixed $\Pi$,
\begin{equation}
\label{l.decouple.eq0}
\lim_{N\to\infty}N^{-1} |\psi(\Pi,N)|\,\#\Psi(\Pi,N) = 0\,,
\end{equation}
where $\#$ denotes cardinality of a set. 

An immediate observation is that, for all $1\le i,j,i^\prime,j^\prime\le N$,
\[
\Cov\left(\tilde M_N(i,j),\tilde M_N(i^\prime,j^\prime\right)=0
\quad \text{ if } \quad
\left(i\wedge j,i\vee j\right)\neq\left(i^\prime\wedge j^\prime,i^\prime\vee j^\prime\right)\,,
\]
and likewise for $\tilde\Delta_N$. Furthermore,
\[
\Var\left(\tilde M_N(i,j)\right)=\Var\left(\bar M_N(i,j)\right),\qquad 1\le i,j\le N\,,
\]
and likewise for $\tilde\Delta_N$ and $\bar M_N$. For $N\ge1$ and $1\le i,j,i^\prime,j^\prime\le N$, 
define
\[
\eta_N(i,j,i^\prime,j^\prime) =
\begin{cases}
\frac{\Cov\left(\bar\Delta_N(i,j),\bar\Delta_N(i^\prime,j^\prime)\right)}
{\Cov\left(\bar M_N(i,j),\bar M_N(i^\prime,j^\prime)\right)}\,,
&\text{if the denominator is non-zero}\,,\\
0\,,&\text{otherwise}\,.
\end{cases}
\]
It is easy to check that the assumption  $f\le1$ ensures that $|\eta_N(i,j,i^\prime,j^\prime)|\le1$. 
Therefore, for all $N$ and $1\le i,j,i^\prime,j^\prime\le N$,
\begin{eqnarray*}
\label{l.decouple.eq1}
\Cov\left(\bar\Delta_N(i,j),\bar\Delta_N(i^\prime,j^\prime)\right)
&=& \eta_N(i,j,i^\prime,j^\prime)\Cov\left(\bar M_N(i,j),\bar M_N(i^\prime,j^\prime)\right)\,,\\
\label{l.decouple.eq2}
\Cov\left(\tilde\Delta_N(i,j),\tilde\Delta_N(i^\prime,j^\prime)\right)
&=&\eta_N(i,j,i^\prime,j^\prime)\Cov\left(\tilde M_N(i,j),\tilde M_N(i^\prime,j^\prime)\right)\,.
\end{eqnarray*}

For  fixed $\Pi$, $N$ and $i\in\Psi(\Pi,N)$, by an appeal to Wick's formula the above implies 
that there exists a $\xi(i,N)\in[-1,1]$ such that
\[
\E\left(\prod_{u=1}^{2k}\bar\Delta_N(i_u,i_{u+1})\right)
-\E\left(\prod_{u=1}^{2k}\tilde\Delta_N(i_u,i_{u+1})\right)=\xi(i,N)\psi(\Pi,N)\,,
\]
and therefore, by \eqref{l.decouple.eq0},
\[
\begin{aligned}
&\sum_{i\in\Psi(\Pi,N)}\left|\E\left(\prod_{u=1}^{2k}\bar\Delta_N(i_u,i_{u+1})\right)
-\E\left(\prod_{u=1}^{2k}\tilde\Delta_N(i_u,i_{u+1})\right)\right|\\
&=\sum_{i\in\Psi(\Pi,N)}|\xi(i,N)|\left|\psi(\Pi,N)\right|
\le |\psi(\Pi,N)|\,\#\Psi(\Pi,N) = o(N)\,, \quad N \to \infty\,.
\end{aligned}
\]
Since this holds for every partition $\Pi$ of $\{1,\ldots,2k\}$, \eqref{l.decouple.claim1} follows. 
The proof of \eqref{l.decouple.claim2} follows along similar lines.
\end{proof}


\subsection{Combinatorics from free probability}

The final preparation is a general result from random matrix theory. To state this, the following 
notions from the theory of free probability are borrowed, the details of which can be found in 
\cite{nica:speicher:2006}. 

\begin{defn}
For an even positive integer $k$, $NC_2(k)$ is the set of non-crossing pair partitions of 
$\{1,\ldots,k\}$. For $\sigma\in NC_2(k)$, its \emph{Kreweras complement} $K(\sigma)$ 
is the maximal non-crossing partition $\bar\sigma$ of $\{\bar 1,\ldots,\bar k\}$, such that 
$\sigma\cup\bar\sigma$ is a non-crossing partition of $\{1,\bar 1,\ldots,k,\bar k\}$. For example,
\begin{eqnarray*}
K\left(\{(1,4),(2,3)\}\right)&=&\{(1,3),(2),(4)\}\,,\\
K\left(\{(1,2),(3,4),(5,6)\}\right)&=&\{(1),(2,4,6),(3),(5)\}\,.
\end{eqnarray*}
The second example is illustrated as: 

\begin{equation*}
 \setlength{\unitlength}{0.6cm} 
\begin{picture}(9,4)\thicklines
 \put(0,0){\line(0,1){3}} 
\put(0,0){\line(1,0){2}} 
\put(6,0){\line(0,1){3}} 
\put(2,0){\line(0,1){3}}  
\put(4,0){\line(1,0){2}} 
\put(4,0){\line(0,1){3}} 
\put(8,0){\line(0,1){3}} 
\put(8,0){\line(1,0){2}} 
\put(10,0){\line(0,1){3}} 
\put(1,0.5){\color{red}\line(0,1){2.5}}
\put(5,0.5){\color{red}\line(0,1){2.5}}
\put(3,-0.5){\color{red}\line(0,1){3.5}}
\put(7,-0.5){\color{red}\line(0,1){3.5}}
\put(3,-.5){\color{red}\line(1,0){4}}
\put(7,-.5){\color{red}\line(1,0){4}}
\put(11,-.5){\color{red}\line(0,1){3.5}}
\put(9,.5){\color{red}\line(0,1){2.5}}
 \put(-0.1,3.3){$1$} 
\put(0.9,3.3){$\bar{1}$} 
\put(1.9,3.3){$2$} 
\put(2.9,3.3){$\bar{2}$} 
\put(3.9,3.3){$3$}
\put(4.9,3.3){$\bar{3}$}
 \put(5.9,3.3){$4$}
 \put(6.9,3.3){$\bar{4}$} 
\put(7.9,3.3){$5$} 
\put(8.7,3.3){$\bar{5}$}
\put(9.7,3.3){$6$}
\put(10.7,3.3){$\bar{6}$}
\end{picture}
\end{equation*}

\vspace{1cm}

\noindent
For $\sigma\in NC_2(k)$ and $N\ge1$, define
\[
\begin{aligned}
&S(\sigma,N)\\
&=\big\{i\in\{1,\ldots,N\}^k\colon\, i_u=i_v\iff u,v\text{ belong to the same block of }K(\sigma)\big\}
\end{aligned}
\]
and
\[
C(k,N)=\{1,\ldots,N\}^k\setminus\left(\bigcup_{\sigma\in NC_2(k)}S(\sigma,N)\right)\,.
\]
In other words, $S(\sigma,N)$ is the same as $\Psi(K(\sigma),N)$ defined in \eqref{eq.defpsi}. 
\end{defn}

\begin{lemma}
\label{l.diag}
Suppose that, for each $N\ge1$, $W_{N,1},\ldots,W_{N,k}$ are $N\times N$ real (and 
possibly asymmetric) random matrices, where $k$ is a positive even number. Suppose
further that, for each $u=1,\ldots,k$,
\begin{equation}
\label{l.diag.eq0}
\max_{1\le i,j\le N}\E\left[W_{N,u}(i,j)^k\right]=O\left(N^{-k/2}\right)
\end{equation}
and
\begin{equation}
\label{l.diag.eq3}
\lim_{N\to\infty}\E\left[\left(\frac1N\sum_{i\in C(k,N)}P_i\right)^2\,\right]=0\,,
\end{equation}
and that, for every $\sigma\in NC_2(k)$, there exists a deterministic and finite $\beta(\sigma)$ 
such that
\begin{eqnarray}
\label{l.diag.eq1}
\lim_{N\to\infty}\E\left(\frac1N\sum_{i\in S(\sigma,N)}P_i\right)
&=&\beta(\sigma)\,,\\
\label{l.diag.eq2}
\lim_{N\to\infty}\E\left[\left(\frac1N\sum_{i\in S(\sigma,N)}P_i\right)^2\,\right]
&=&\beta(\sigma)^2\,,
\end{eqnarray}
where
\[
P_i=W_{N,1}(i_1,i_2)\ldots W_{N,k-1}(i_{k-1},i_k)W_{N,k}(i_k,i_1)\,, \qquad i\in\{1,\ldots,N\}^k\,.
\]
Furthermore, let $V_1,V_2,\ldots$ be i.i.d.\ random variables drawn from some distribution with 
all moments finite, independent of $(W_{N,j}\colon\,N\ge1,\,1\le j\le k)$, and let
\[
U_N=\diag(V_1,\ldots,V_N), \qquad N \ge1\,.
\]
Then, for all choices of $n_1,\ldots,n_k \ge 0$,
\[
\lim_{N\to\infty} \frac1N\Tr\left(U_N^{n_1}W_{N,1}\ldots U_N^{n_k}W_{N,k}\right) = c 
\quad \text{ in } L^2
\]
for some deterministic $c \in \R$.
\end{lemma}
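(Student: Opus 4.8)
The plan is to reduce the assertion to a moment computation and then invoke the concentration hypotheses \eqref{l.diag.eq3}--\eqref{l.diag.eq2} to upgrade convergence in expectation to $L^2$ convergence. First I would expand the normalized trace combinatorially: writing $m=n_1+\dots+n_k$ and $P=U_N^{n_1}W_{N,1}\cdots U_N^{n_k}W_{N,k}$, since the $U_N$'s are diagonal we get
\[
\frac1N\Tr(P)=\frac1N\sum_{i\in\{1,\dots,N\}^k}V_{i_1}^{n_1}\cdots V_{i_k}^{n_k}\,
W_{N,1}(i_1,i_2)\cdots W_{N,k}(i_k,i_1)
=\frac1N\sum_{i\in\{1,\dots,N\}^k}Q_i\,P_i\,,
\]
where $Q_i=V_{i_1}^{n_1}\cdots V_{i_k}^{n_k}$ depends only on the partition of $\{1,\dots,k\}$ induced by the equalities among the indices $i_1,\dots,i_k$. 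I would split the sum over $i$ according to which cell of the partition lattice the index tuple falls in. For indices lying in $S(\sigma,N)$ for some $\sigma\in NC_2(k)$, the factor $Q_i$ takes a value depending only on $\sigma$ (through $K(\sigma)$): namely $Q_i=\prod_{B\in K(\sigma)}\E\bigl[V^{\,\sum_{u\in B}n_u}\bigr]$ after conditioning, call it $q(\sigma)$; here I use the independence of $(V_j)$ from the $(W_{N,u})$ together with the i.i.d.\ property, so that $\E[Q_i\mid (W_{N,u})]$ equals the product over blocks of the appropriate moment of $V$. The remaining indices lie in $C(k,N)$.

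Second, I would handle the two pieces. On $C(k,N)$, note that $|Q_i|\le \max_j|V_j|^m$ is \emph{not} bounded, so a little care is needed: I would instead bound $\E\bigl[\bigl(\tfrac1N\sum_{i\in C(k,N)}Q_iP_i\bigr)^2\bigr]$ by expanding the square, conditioning on the $W$'s, and using that each $\E[Q_iQ_{i'}\mid W]$ is a bounded-by-$M$ constant (here $M$ is the appropriate moment of $V$, finite by hypothesis) to reduce to $M^2\,\E\bigl[\bigl(\tfrac1N\sum_{i\in C(k,N)}|P_i|\bigr)^2\bigr]$; by \eqref{l.diag.eq3} together with a Cauchy--Schwarz step (replacing $|P_i|$ by $P_i$ requires the same argument applied to absolute values, which follows from \eqref{l.diag.eq0} and a standard counting of tuples in $C(k,N)$), this vanishes. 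On each $S(\sigma,N)$, after conditioning on the $W$'s and using that $Q_i\equiv q(\sigma)$ in conditional expectation,
\[
\E\!\left[\frac1N\sum_{i\in S(\sigma,N)}Q_iP_i\right]=q(\sigma)\,\E\!\left[\frac1N\sum_{i\in S(\sigma,N)}P_i\right]\longrightarrow q(\sigma)\beta(\sigma)
\]
by \eqref{l.diag.eq1}, and likewise the second moment converges to $q(\sigma)^2\beta(\sigma)^2$ using \eqref{l.diag.eq2} (again conditioning on the $W$'s so that the $Q$-factors come out as the constants $q(\sigma)$, and controlling the cross terms between distinct tuples in $S(\sigma,N)$ via \eqref{l.diag.eq0}). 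Summing over the finitely many $\sigma\in NC_2(k)$ and using that the partition classes $\{S(\sigma,N)\}_{\sigma}$ and $C(k,N)$ form a disjoint decomposition of $\{1,\dots,N\}^k$, I would conclude
\[
\E\!\left[\frac1N\Tr(P)\right]\to c:=\sum_{\sigma\in NC_2(k)}q(\sigma)\beta(\sigma),\qquad
\E\!\left[\Bigl(\tfrac1N\Tr(P)\Bigr)^2\right]\to c^2,
\]
which is exactly $L^2$ convergence to the deterministic constant $c$.

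The main obstacle I anticipate is the unboundedness of $Q_i$: because $V$ need not be bounded, one cannot simply pull $Q_i$ out as an $O(1)$ factor pathwise, and the cross terms in the second-moment estimates couple a $Q$-factor with a $Q$-factor at a different index tuple. The clean way around this is to always condition on $\sigma$-algebra generated by the $(W_{N,u})$ \emph{first}, exploiting the assumed independence, so that every occurrence of a $Q$-product is replaced by a genuine constant (a finite product of moments of $V$) before any counting is done; the surviving combinatorial sums over $P_i$ are then precisely the ones controlled by \eqref{l.diag.eq0}--\eqref{l.diag.eq2}. A secondary technical point is that \eqref{l.diag.eq3} is stated for $\sum_{i\in C(k,N)}P_i$ rather than $\sum_{i\in C(k,N)}|P_i|$; I would recover the absolute-value version from \eqref{l.diag.eq0} by the crude bound $\E\bigl[\bigl(\tfrac1N\sum_{i\in C(k,N)}|P_i|\bigr)^2\bigr]\le \tfrac1{N^2}\,(\#C(k,N))\sum_{i}\E[P_i^2]^{}$-type estimates combined with the fact that $\#C(k,N)=o(N^{k/2+1})$ (tuples in $C(k,N)$ use at most $k/2$ distinct values that are ``wasted'' in a crossing or a non-pair block), which together with \eqref{l.diag.eq0} gives the required decay; alternatively one notes that \eqref{l.diag.eq3} already encodes this cancellation and a Cauchy--Schwarz argument suffices.
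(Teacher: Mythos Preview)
Your proposal follows the same architecture as the paper: expand the trace, decompose $\{1,\dots,N\}^k$ into $C(k,N)$ and the sets $S(\sigma,N)$, and establish $L^2$ convergence of each piece. Your treatment of $S(\sigma,N)$---first moment via \eqref{l.diag.eq1}, second moment by separating ``disjoint'' pairs $(i,j)$ (where $\E[Q_iQ_j]=q(\sigma)^2$, so \eqref{l.diag.eq2} applies) from overlapping ones (killed by \eqref{l.diag.eq0} together with the fact that $K(\sigma)$ has exactly $k/2+1$ blocks)---is exactly what the paper does.

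The place you diverge is the $C(k,N)$ piece, and here your detour through $|P_i|$ is both unnecessary and flawed. First, your worry about the unboundedness of $Q_i$ is overblown: since the $V$'s are independent of the $W$'s, one has $\E[\tilde P_i\tilde P_j]=\E[Q_iQ_j]\,\E[P_iP_j]$, and $\E[Q_iQ_j]$ is a \emph{deterministic} product of finitely many moments of $V$, bounded by a fixed constant $K$; no pathwise control on $Q_i$ is ever required. The paper exploits exactly this to bound $\E\bigl[(\tfrac1N\sum_{i\in C(k,N)}\tilde P_i)^2\bigr]$ by $K\,N^{-2}\sum_{i,j\in C(k,N)}\E(P_iP_j)$ and then invokes \eqref{l.diag.eq3} directly, without ever passing to $|P_i|$. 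Second, your fallback claim that $\#C(k,N)=o(N^{k/2+1})$ is simply false: every $k$-tuple with pairwise distinct entries lies in $C(k,N)$ (the partition into singletons is never the Kreweras complement of a non-crossing pair partition when $k\ge2$), so in fact $\#C(k,N)\sim N^k$. Consequently your ``crude bound'' for $\E\bigl[(\tfrac1N\sum_{i\in C(k,N)}|P_i|)^2\bigr]$ cannot work, and the Cauchy--Schwarz alternative you mention does not recover absolute values from \eqref{l.diag.eq3} either (the inequality goes the wrong way). Drop the $|P_i|$ route and argue as the paper does.
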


\begin{proof}
The fact that the sets $S(\sigma,N)$ are disjoint for different $\sigma\in NC_2(k)$ allows us to 
write
\begin{eqnarray*}
\Tr\left(U_N^{n_1}W_{N,1}\ldots U_N^{n_k}W_{N,k}\right)
&=&\sum_{\sigma\in NC_2(k)}\sum_{i\in S(\sigma,N)}\tilde P_i+\sum_{i\in C(k,N)}\tilde P_i\,,
\end{eqnarray*}
where
\[
\tilde P_i=\prod_{j=1}^k\left(V_{i_{j}}^{n_j}W_{N,j}(i_j,i_{j+1})\right),\qquad i\in\{1,\ldots,N\}^k\,.
\]
In order to show that the second sum in the right-hand side is negligible after scaling by $N$, 
the independence of $(V_1,V_2,\ldots)$ and $(W_{N,j}\colon\,N\ge1,\,1\le j\le k)$, together with 
the fact that the common distribution of the former has finite moments, implies that
\begin{eqnarray*}
\E\left[\left(\frac1N\sum_{i\in C(k,N)}\tilde P_i\right)^2\,\right]
\le KN^{-2}\sum_{i,j\in C(k,N)}\E(P_iP_j)\,,
\end{eqnarray*}
where $K$ is a finite constant. Assumption \eqref{l.diag.eq3} shows that
\[
\lim_{N\to\infty} \frac1N\sum_{i\in C(k,N)}\tilde P_i = 0\quad \text{ in } L^2\,.
\]

In order to complete the proof, it suffices to show that for every $\sigma\in NC_2(k)$ 
there exists a $\theta(\sigma)\in\bbr$ with
\begin{equation}
\label{l.diag.eq4}
\lim_{N\to\infty} \frac1N \sum_{i\in S(\sigma,N)}\tilde P_i = \theta(\sigma) \
\quad \text{ in } L^2\,.
\end{equation}
To that end, fix $\sigma\in NC_2(k)$ and note that, for $i\in S(\sigma,N)$,
\begin{equation}
\E(\tilde P_i)
= \E\left(P_i\right)\E\left(\prod_{j=1}^kV_{i_j}^{n_j}\right)
= \E\left(P_i\right)\prod_{u\in K(\sigma)}\E\left(V_1^{\sum_{j\in u}n_j}\right)\,,
\end{equation}
the  product in the last line being taken over every block $u$ of $K(\sigma)$. Putting
\[
\theta(\sigma)=\beta(\sigma)\prod_{u\in K(\sigma)}\E\left(V_1^{\sum_{j\in u}n_j}\right)\,,
\]
we see that \eqref{l.diag.eq1} gives
\begin{equation}
\label{l.diag.eq5}
\lim_{N\to\infty}\E\left[\frac1N\sum_{i\in S(\sigma,N)}\tilde P_i\right]=\theta(\sigma)\,.
\end{equation}
Let us call $i,j\in\bbn^k$ ``disjoint'' if no coordinate of $i$ matches any coordinate of 
$j$, i.e.,
\[
\min_{1\le u,v\le k}|i_u-j_v|\ge1\,.
\]
Since $K(\sigma)$ has exactly $\tfrac12 k+1$ blocks, \eqref{l.diag.eq0} implies that  
\[
\lim_{N\to\infty}N^{-2} \sum_{ \genfrac{}{}{0pt}1{i,j\in S(\sigma,N)}{i,j\text{ not disjoint}}}
\E(\tilde P_i\tilde P_j)=0\,.
\]
If $i,j\in S(\sigma,N)$ are disjoint, then it is immediate that
\[
\E(\tilde P_i\tilde P_j)=\left[\prod_{u\in K(\sigma)}
\E\left(V_1^{\sum_{j\in u}n_j}\right)\right]^2\E(P_iP_j)\,.
\]
The above two displays, in conjunction with \eqref{l.diag.eq2}, show that
\[
\lim_{N\to\infty}\E\left[\left(\frac1N\sum_{i\in S(\sigma,N)}\tilde P_i\right)^2\,\right]
=\theta(\sigma)^2\,.
\]
This, along with \eqref{l.diag.eq5}, establishes \eqref{l.diag.eq4}, from which the proof 
follows.
\end{proof}


\section{Proof of Theorems \ref{t.adj.gen}--\ref{t.mult}}
\label{sec:proofs}

\begin{proof}[Proof of Theorem \ref{t.adj.gen}]
Theorem 2.1 of \cite{chakrabarty:2017} implies that as $N\to\infty$,
\[
\lim_{N\to\infty} \esd\left(\bar A_N\right) = \mu \quad \text{weakly in probability}\,,
\]
for a compactly supported symmetric probability measure $\mu$. Lemma \ref{l.minor} immediately 
tells us that
\[
\lim_{N\to\infty} \esd\left(A_N^g\right) = \mu \quad \text{weakly in probability}\,,
\]
and hence for $h$ and $H_N$ as in Lemma \ref{l.gauss}, 
\[
\lim_{N\to\infty} \E\left[h\left(\Re H_N(A_N^g)\right)\right]=h\left(\Re\int_\bbr\frac1{x-z}\,\mu(dx)\right)\,.
\]
The claim in \eqref{l.gauss.claim1} shows that $A_N^g$ can be replaced by $A_N^0$ in the above 
display. Since the right-hand side is deterministic and the above holds for any $h$ satisfying the 
hypothesis of Lemma \ref{l.gauss}, it follows that
\[
\lim_{N\to\infty} \Re H_N(A_N^0) = \Re\int_\bbr\frac1{x-z}\,\mu(dx) \quad \text{in probability}\,.
\]
A similar argument works for the imaginary part, which shows that
\[
\lim_{N\to\infty} \esd(A_N^0) = \mu \quad \text{weakly in probability}\,.
\]
Lemma \ref{l.mc} completes the proof of \eqref{t.adj.gen.claim1}.

Finally, if $f$ is bounded away from $0$, then the combination of \cite[Lemma 3.1]{chakrabarty:2017} 
and \cite[Corollary 2]{biane:1997} implies that $\mu$ is absolutely continuous with respect to the 
Lebesgue measure (see also \cite{chakrabarty:hazra:2016}). Thus, the proof of Theorem~\ref{t.adj.gen} 
follows. 
\end{proof}

\begin{remark}
{\rm A close inspection of the proof reveals that it suffices to assume that $f$ is bounded and Riemann 
integrable instead of continuous. In other words, if $f$ is symmetric and bounded, and its set of 
discontinuities has Lebesgue measure zero, then the result holds. However, continuity will be used 
later in \eqref{t.lap.gen.eq2} in the proof of Theorem \ref{t.lap.gen}. Furthermore, if $\vep_N=1$ for 
all $N$, then 
\[
\lim_{N\to\infty}\esd\left(N^{-1/2}(A_N-\E(A_N))\right)=\mu_{\sqrt{f(1-f)}} \quad \text{ weakly in probability}\,,
\]
where the right-hand side is the probability measure obtained after replacing $f$ with $\sqrt{f(1-f)}$ 
in \cite[Theorem 2.1]{chakrabarty:2017}.}
\end{remark}

\begin{proof}[Proof of Theorem \ref{t.lap.gen}]
The proof comes in 3 Steps.

\medskip\noindent
{\bf 1.\ Riemann approximation.}
For $N\ge1$, define the $N\times N$ diagonal matrix $Q_N$ by
\begin{equation}
\label{eq.defQNglobal}
Q_N(i,i)=F(i/N)Z_i, \qquad 1\le i\le N\,,
\end{equation}
where
\begin{equation}
\label{eq.defFglobal}
F(x)=\left(\int_0^1f\left(x,y\right)\,dy\right)^{1/2},\qquad 0\le x\le1\,,
\end{equation}
and $(Z_i\colon\,i\ge1)$ is as in Lemma \ref{l.decouple}. Fact \ref{fact0} in Appendix~\ref{sec:facts} 
implies that 
\begin{eqnarray*}
&&\left|\left(\frac1N\Tr\left((\tilde\Delta_N)^k\right)\right)^{1/k}
-\left(\frac1N\Tr\left((\bar A_N+Q_N)^k\right)\right)^{1/k}\right|\\
&&\qquad \le\left(\frac1N\Tr\left[(Y_N-Q_N)^k\right]\right)^{1/k}\,.
\end{eqnarray*}
Since, $f$ being continuous,
\begin{equation}
\label{t.lap.gen.eq2}
\begin{aligned}
&\E\left[N^{-2}\Tr^2\left[(Y_N-Q_N)^k\right]\right] =O(1)\\
&\times\sup_{0\le x\le1}\left[F(x)-\left(\frac1N\sum_{j=1,j\neq[Nx]/N}^N
f\left(x,\frac jN\right)\right)^{1/2}\right]^{2k} \to 0\,, \quad N \to \infty\,,
\end{aligned}
\end{equation}
we get, for every even $k$, 
\begin{equation}
\label{t.lap.gen.eq1}\left(\frac1N\Tr\left((\tilde\Delta_N)^k\right)\right)^{1/k}
-\left(\frac1N\Tr\left((\bar A_N+Q_N)^k\right)\right)^{1/k} \to 0 
\, \text{ in } L^{2k}\,, \quad N\to\infty\,. 
\end{equation}

Our next step is to show that, for every even integer $k$,
\begin{equation}
\label{t.lap.gen.eq3}
\lim_{N\to\infty} \frac1N\Tr\left((\bar A_N+Q_N)^k\right) = \gamma_k \quad \text{ in } L^2 
\end{equation}
for some $\gamma_k\in\bbr$. The above will follow once we show that, for all $m\ge1$ 
and $n_1,\ldots,n_m\ge0$,
\begin{equation}
\label{t.lap.gen.eq4}
\lim_{N\to\infty} \frac1N\Tr\left(Q_N^{n_1}\bar A_N\ldots Q_N^{n_m}\bar A_N\right) 
= \theta \quad \text{ in } L^2
\end{equation}
for some $\theta\in\bbr$ (depending on $m,n_1,\ldots,n_m$). To that end, define the diagonal 
matrices $U_N$ and $B_N$ by
\begin{eqnarray*}
U_N(i,i)&=&Z_i\,,\\
B_N(i,i)&=&F(i/N)\,, \qquad i=1,\ldots,N\,.
\end{eqnarray*}
Observe that 
\[
Q_N=B_NU_N=U_NB_N\,,
\]
and hence the left-hand side of \eqref{t.lap.gen.eq4} is the same as
\begin{equation}
\label{t.lap.gen.eq5}\frac1N\Tr\left(U_N^{n_1}W_{N1}\ldots U_N^{n_m}W_{Nm}\right)\,,
\end{equation}
where 
\[
W_{Nj}=B_N^{n_j}\bar A_N,\qquad j=1,\ldots,m\,.
\]
In order to apply Lemma~\ref{l.diag} we need to verify its hypotheses.

\medskip\noindent
{\bf 2.\ Verification of the hypotheses.}
Our next claim is that $W_{N1},\ldots,W_{Nm}$ satisfy \eqref{l.diag.eq0}--\eqref{l.diag.eq2}. 
To that end, observe that for $N\ge1$ and  $j=1,\ldots,m$,
\[
W_{Nj}(u,v)=F^{n_j}\left(\frac uN\right)f^{1/2}\left(\frac uN,\frac vN\right)N^{-1/2}
G_{u\wedge v,\,u\vee v},\qquad 1\le u,v\le N\,.
\]
Let
\[
H_j(x,y)=F^{n_j}(x)f^{1/2}(x,y),\,(x,y)\in[0,1]^2\,.
\]
Fix a partition $\Pi$ of $\{1,\ldots,m\}$. Recall the notation $\Psi(\Pi,N)$ in the proof of Lemma 
\ref{l.decouple}. Clearly, for every $i\in\Psi(\Pi,N)$,
\[
\E\left(\prod_{j=1}^mW_{Nj}(i_j,i_{j+1}))\right)=N^{-m/2}\psi(\Pi)
\left(\prod_{j=1}^mH_j\left(\frac{i_j}N,\frac{i_{j+1}}N\right)\right)\,,
\]
where
\[
\psi(\Pi)=\E\left(\prod_{j=1}^mG_{i_j\wedge i_{j+1},i_j\vee i_{j+1}}\right)\,,
\]
which does not depend on $i\in\Psi(\Pi,N)$. The standard arguments leading to a proof via the 
method of moments of the Wigner semicircle law show that
\begin{eqnarray*}
&&\lim_{N\to\infty} N^{-m/2+1}\,\psi(\Pi)\,\#\Psi(\Pi,N)\\
&&= \begin{cases}
1,&\text{if }m\text{ is even, and }\Pi=K(\sigma)\text{  for some }\sigma\in NC_2(m)\,,\\
0,&\text{ otherwise}\,.
\end{cases}
\end{eqnarray*}
Assume for the moment that $m$ is even, and let $\sigma\in NC_2(m)$. It is known that $K(\sigma)$ 
has $m/2+1$ blocks. Define a function ${\mathcal L}_\sigma:\{1,\ldots,m\}\to\{1,\ldots,\tfrac12 m+1\}$ 
such that
\[
{\mathcal L}_\sigma(j)={\mathcal L}_\sigma(k)\text{ if and only if }
j,k\text{ are in the same block of }K(\sigma)\,.
\]
It follows that for $\Pi=K(\sigma)$,
\begin{eqnarray*}
&&\lim_{N\to\infty}\frac1N\sum_{i\in\Psi(\Pi,N)}\E\left(\prod_{j=1}^mW_{Nj}(i_j,i_{j+1}))\right)\\
&&\qquad =\int_{[0,1]^{(m/2)+1}}\prod_{(u,v)\in\sigma,u<v}H_u
\left(x_{{\mathcal L}_\sigma(u)},x_{{\mathcal L}_\sigma(v)}\right)dx_1\ldots dx_{(m/2)+1}\,.
\end{eqnarray*}
This shows that hypothesis \eqref{l.diag.eq1} holds. The hypotheses \eqref{l.diag.eq3} and 
\eqref{l.diag.eq2} follow similarly by an analogue of the standard arguments, while \eqref{l.diag.eq0} 
is trivial.

Thus, $W_{N1},\ldots,W_{Nm}$ and $U_N$ satisfy the hypotheses of Lemma \ref{l.diag}. The 
claim of that lemma shows that the random variable in \eqref{t.lap.gen.eq5} converges in $L^2$ 
to a finite deterministic constant as $N\to\infty$, i.e., \eqref{t.lap.gen.eq4} holds. This in turn 
proves \eqref{t.lap.gen.eq3}, which in conjunction with \eqref{t.lap.gen.eq1} shows that
\[
\lim_{N\to\infty} \frac1N\Tr\left((\tilde\Delta_N)^k\right) = \gamma_k \quad \text{ in } L^2\,.
\]
Lemma \ref{l.decouple} asserts that 
\begin{equation}
\label{t.lap.gen.eq6}
\lim_{N\to\infty} \frac1N\Tr\left((\bar\Delta_N)^k\right) = \gamma_k \quad \text{ in } L^2\,,
\end{equation}
and hence also in probability. 

\medskip\noindent
{\bf 3.\ Uniqueness of the limiting measure.} 
Equation \eqref{t.lap.gen.eq3} ensures that there exists a symmetric probability measure 
on $\bbr$ whose $k$-th moment is $\gamma_k$ for every even integer $k$. Our next claim 
is that such a measure is unique, i.e., $(\gamma_k\colon\,k\ge 1)$ determines the measure. 
It is not obvious how to check Carleman's condition, and therefore we argue as follows.   
It suffices to exhibit a probability measure $\nu$ whose odd moments are zero and whose 
$k$-th moment is $\gamma_k$ for even $k$ such that
\begin{equation}
\label{t.lap.gen.eq7}
\int_{-\infty}^\infty e^{tx}\nu(dx)<\infty \quad \forall\,t\in\bbr\,.
\end{equation}
To do so we bring in the notion of a non-commutative probability space (NCP), which is defined 
in Appendix \ref{sec:facts}. For $K>0$ and $N\ge1$, define 
\[
U_{NK}=\diag\left(Z_1\one(|Z_1|\le K),\ldots,Z_1\one(|Z_N|\le K)\right)\,,
\]
and
\[
Q_{NK}=B_NU_{NK}\,.
\]
The arguments leading to \eqref{t.lap.gen.eq4} can be easily tweaked to show that, 
for fixed $K>0$ and a fixed polynomial $p$ in two non-commuting variables,
\begin{equation}
\label{eq.limit}
\lim_{N\to\infty}\frac1N\E\Tr\left[p\left(\bar A_N,Q_{NK}\right)\right]
\end{equation}
exists. Fact \ref{fact:ncp} in Appendix \ref{sec:facts} implies that there exist self-adjoint 
elements $q$ and $a$ in a tracial NCP $(\A,\phi)$ such that the above limit equals 
$\phi\left[p\left(a,q\right)\right]$ for every polynomial $p$ in two non-commuting variables. 
Hence
\begin{equation}
\label{eq.lsd}
\lim_{N\to\infty} \eesd\left[p\left(\bar A_N,Q_{NK}\right)\right] 
= {\mathcal L}\left[p\left(a,q\right)\right] \quad \text {in distibution}\,,
\end{equation}
for any symmetric polynomial $p$, where $\eesd$ denotes the expectation of $\esd$.
Theorem \ref{t.adj.gen} implies that the LSD of $\bar A_N$, which is ${\mathcal L}(a)$ 
by \eqref{eq.lsd}, is compactly supported, and hence $a$ is a bounded element. The 
spectrum of $q$ is clearly a subset of $[-K,K]$. The second claim in Fact \ref{fact:ncp} 
in Appendix \ref{sec:facts} allows us to assume that $(\A,\phi)$ is a $W^*$-probability 
space.

Let
\begin{equation}
\label{eq.defnuk}\nu_K={\mathcal L}(a+q)\,.
\end{equation}
If $C$ is a finite constant such that
\begin{equation}
\label{eq.c}-C\one\le a\le C\one\,,
\end{equation}
then clearly
\begin{equation}
\label{t.lap.gen.eq8}
a+q\le C\one+q\,.
\end{equation}
Applying the method of moments to $Q_{NK}$, we find by an appeal to \eqref{eq.lsd} that the 
law of $q$ is same as the law of 
\begin{equation*}
\label{eq.lawq}F(V)Z_1\one(|Z_1|\le K)\,, 
\end{equation*}
where $V$ is standard uniform independently of $Z_1$, and $F$ is as in \eqref{eq.defFglobal}. 
Under the assumption that $f \le 1$, which represents no loss of generality,
\[
\int_{-\infty}^\infty e^{tx}\left({\mathcal L}(q)\right)(dx)\le e^{t^2/2},\,t\in\bbr\,.
\]
By \cite[Corollary 3.3]{bercovici:voiculescu:1993} applied to \eqref{t.lap.gen.eq8}, it follows that 
\begin{equation}
\label{t.lap.gen.eq9}
\int_\R e^{tx}\nu_K(dx) \le \int_\R e^{tx}\left({\mathcal L}(C\one+q)\right)(dx)
\le \exp\left(\tfrac12 t^2 + tC\right)\,, \quad t >0\,.
\end{equation}

Fact \ref{fact:HW} applied to $\bar A_N+Q_{NK_1}$ and $\bar A_N+Q_{NK_1}$ shows that 
\[
\sup_{N\ge1}L\left(\eesd\left(\bar A_N+Q_{NK_1}\right),
\eesd\left(\bar A_N+Q_{NK_2}\right)\right)
\]
is small for large $K_1$ and $K_2$. Thus, $(\nu_K\colon\,K>0)$ is Cauchy in the L\'evy metric, 
and hence there exists a probability measure $\nu$ such that
\begin{equation}
\label{eq.wc}\lim_{K\to\infty} \nu_K = \nu\,.
\end{equation}
This, along with \eqref{t.lap.gen.eq9}, establishes that
\begin{equation}
\label{t.lap.gen.eq10}
\int_\R e^{tx}\nu(dx) \le \exp\left(\tfrac12 t^2 + tC\right),\quad t>0\,,
\end{equation}
and
\[
\lim_{K\to\infty} \int_\R x^k\nu_K(dx) = \int_\R x^k\nu(dx),\quad k\ge1\,.
\]
Clearly,
\[
\int_{-\infty}^\infty x^k\nu_K(dx)=\lim_{N\to\infty}N^{-1}
\E\Tr\left[\left(\bar A_N+Q_{NK}\right)^k\right]\,.
\]
Therefore, by keeping track of the limit in \eqref{eq.limit}, we can show (with some effort) that
\[
\lim_{K\to\infty} \int_\R x^k\nu_K(dx) =
\begin{cases}
\gamma_k,&k\text{ even}\,,\\
0,&k\text{ odd}\,.
\end{cases}
\]
Thus, $\nu$ has the desired moments. By extending \eqref{t.lap.gen.eq10} to the case $t<0$,  
we see that \eqref{t.lap.gen.eq7} follows. Thus, $\nu$ is the only symmetric probability measure 
whose even moments are $(\gamma_k)$.

Equation \eqref{t.lap.gen.eq6} and the claim proved above show that
\[
\lim_{N\to\infty} \esd\left(\bar\Delta_N\right)= \nu \quad \text{ weakly in probability}\,.
\]
Hence Lemmas \ref{l.mc}--\ref{l.minor} imply that
\[
\lim_{N\to\infty} \esd\bigl((N\vep_N)^{-1/2}(\Delta_N-D_N)\bigr) = \nu
\quad \text{ weakly in probability}\,.
\]
as in the proof of Theorem \ref{t.adj.gen}.

It remains to show that if $f$ is not identically zero, then the support of $\nu$ is unbounded. To that 
end, recall that \eqref{t.lap.gen.eq3}, together with the fact that $\nu$ is the only symmetric probability 
measure whose even moments are $(\gamma_k)$, establish that
\[
\lim_{N\to\infty}\esd\left(\bar A_N+Q_N\right)=\nu \quad \text{ weakly in probability}\,,
\]
where $\bar A_N$ and $Q_N$ are as in \eqref{eq.defANglobal} and \eqref{eq.defQNglobal}, respectively.
Fix  $0<p<1/2$, and for any $N\times N$ real symmetric matrix $\Sigma$, enumerate its eigenvalues in 
descending order by $\lambda_1(\Sigma),\ldots,\lambda_N(\Sigma)$. Weyl's inequality (see Eq.(1.54) 
on page 47 of \cite{tao2012topics}) implies that
\[
\lambda_{2[Np]-1}(Q_N)\le\lambda_{[Np]}\left(\bar A_N+Q_N\right)+\lambda_{[Np]}(-\bar A_N)\,,
\]
where $[x]$ is the smallest integer larger than or equal to $x$. Therefore
\begin{align*}
\limsup_{N\to\infty}\lambda_{[Np]}\left(\bar A_N+Q_N\right)
&\ge \limsup_{N\to\infty}\lambda_{2[Np]-1}(Q_N)-\liminf_{N\to\infty}\lambda_{[Np]}(-\bar A_N)\\
&\ge \limsup_{N\to\infty}\lambda_{2[Np]-1}(Q_N)-C\,,
\end{align*}
where $C$ is as in \eqref{eq.c}.  Letting $p\to0$ and appealing to Fact \ref{f:rep}, we find that
\begin{align*}
\sup\left(\supp (\nu)\right)=&\lim_{p \downarrow 0} 
\limsup_{N\to\infty}\lambda_{[Np]}\left(\bar A_N+Q_N\right)\\
&\ge \lim_{p \downarrow 0}\limsup_{N\to\infty}\lambda_{2[Np]-1}(Q_N)-C =\infty\,,
\end{align*}
where the last line uses the fact that, as $N\to\infty$, $\esd(Q_N)$ converges weakly in probability 
to the distribution of $F(V)Z_1$, the support of which is unbounded because $f$ is not identically zero. 
\end{proof}

\begin{proof}[Proof of Theorem \ref{t.mult}]
Let $(G_{i,j}\colon\,1\le i\le j)$ and $(Z_i\colon\,i\ge1)$ be as in Lemma \ref{l.decouple}. For 
$N\ge1$, define the $N\times N$ matrices
\begin{eqnarray*}
G_N(i,j) &=& N^{-1/2}G_{i\wedge j,i\vee j},\quad 1\le i,j\le N\,,\\
R_N &=& \diag\left(\sqrt{r(1/N)},\ldots,\sqrt{r(1)}\right)\,,\\
U_N &=& \diag(Z_1,\ldots,Z_N)\,.
\end{eqnarray*}
The notation $U_N$ is exactly as in the proof of Theorem \ref{t.lap.gen}. Let $\bar A_N$ and 
$Q_N$ be as in \eqref{eq.defANglobal} and \eqref{eq.defQNglobal}, respectively. Observe that, 
under the assumption \eqref{eq.mult},
\[
\bar A_N=R_NG_NR_N\,,
\]
and
\[
Q_N=\alpha R_N^{1/2}U_NR_N^{1/2}\,,
\]
where $\alpha$ is as defined in the statement of Theorem \ref{t.mult}. Proceeding as in the 
proofs of Theorems \ref{t.adj.gen} and \ref{t.lap.gen}, wee see that it suffices to show that 
\begin{equation}
\label{t.mult.eq4}
\lim_{N\to\infty} \esd\left(R_NG_NR_N\right) = {\mathcal L}\left(r^{1/2}(T_u)T_sT^{1/2}(T_u)\right)
\quad \text{ weakly in probability} 
\end{equation}
and
\begin{equation}
\label{t.mult.eq5}
\begin{aligned}
&\lim_{N\to\infty} \esd\left(R_NG_NR_N+\alpha R_N^{1/2}U_NR_N^{1/2}\right)\\
&= {\mathcal L}\left(r^{1/2}(T_u)T_sT^{1/2}(T_u)+\alpha r^{1/4}(T_u)T_gr^{1/4}(T_u)\right)
\text{ weakly in probability}\,, 
\end{aligned}
\end{equation}
where $T_s,T_g,T_u$ are as in the statement. Define $U_{NK}$ to be the ``truncated'' version 
of $U_N$, for a fixed $K>0$, as in the proof of Theorem \ref{t.lap.gen}. Both \eqref{t.mult.eq4} 
and \eqref{t.mult.eq5} will follow once we show that
\begin{equation}
\label{eq.toshow}
\lim_{N\to\infty} \frac1N\Tr\left(p\left(R_N^{1/2},U_{NK},G_N\right)\right)
 = \tau\left(p\left(T_r,T_g^\prime,T_s\right)\right) \quad \text{in probability}\,,
\end{equation}
where $T_r=r^{1/4}(T_u)$ and $T_g^\prime=T_g\one(|T_g|\le K)$, for any symmetric polynomial 
$p$ in three non-commuting variables. It is a well known fact that, for all $k\ge1$,
\begin{equation}
\label{t.mult.eq6}
\lim_{N\to\infty} \frac1N\Tr(G_N^k) = \tau(T_s^k)\quad \text{in probability}\,.
\end{equation}
Since $R_N$ and $U_{NK}$ are diagonal matrices, they commute. This, in conjunction with the 
strong law of large numbers, implies that, for any $k\ge1$, $m_1,\ldots,m_k$ and $n_1,\ldots
,n_k\ge0$,
\[
\begin{aligned}
&\lim_{N\to\infty} \frac1N\Tr\left(R_N^{m_1} U_{NK}^{n_1} \ldots R_N^{m_k}U_{NK}^{n_k}\right)\\
&= \int_0^1du\,r^{(m_1+\ldots+m_k)/4}(u)
\int_{-K}^K(2\pi)^{-1/2} dx\,x^{n_1+\ldots+n_k}e^{-x^2/2}
\quad a.s.
\end{aligned}
\]
The above, in conjunction with \eqref{t.mult.eq3} and the fact that $T_g$ and $T_r$ commute, implies 
that
\begin{equation}
\label{t.mult.eq7}
\lim_{N\to\infty} \frac1N\Tr\left(p\left(R_N^{1/2},U_{NK}\right)\right) 
= \tau\left(p\left(T_r,T_g^\prime\right)\right) \quad a.s.
\end{equation}
for any polynomial $p$ in two variables. 

Thus, all that remains to show is the asymptotic free independence of $T_s$ and $(T_r,T_g^\prime)$, 
which is precisely the claim of Fact \ref{fact:voiculescu} in Appendix \ref{sec:facts}, i.e., \eqref{t.mult.eq6} 
and \eqref{t.mult.eq7} imply \eqref{eq.toshow}. Applying \eqref{eq.toshow} to $p(x,y,z)=x^2zx^2$ and 
$p(x,y,z)=x^2zx^2+\alpha xyx$, we get the truncated versions of \eqref{t.mult.eq4} and \eqref{t.mult.eq5}, 
respectively. Yet another application of Fact \ref{fact:HW} in Appendix \ref{sec:facts} allows us to let 
$K\to\infty$, obtaining \eqref{t.mult.eq4} and \eqref{t.mult.eq5}. This completes the proof of 
\eqref{t.mult.eq1} and \eqref{t.mult.eq2}.
\end{proof}


\section{Proof of Theorem \ref{t.random}}
\label{sec:proofrandom}

\begin{proof}[Proof of Theorem \ref{t.random}]
As before, Lemma \ref{l.mc} and the assumption \eqref{eq.conditions} imply that the mean of the entries of $A_N$ can be subtracted at the cost of a negligible perturbation of the $\esd$. The inequalities \eqref{eq.conditions} and  \eqref{random.eq1} ensure that the Gaussianization as in Lemma~\ref{l.gauss}  goes through by conditioning on $R_{N1},\ldots,R_{NN}$. That is, if $(G_{ij}\colon\,1\le i\le j)$ is a 
collection of i.i.d.\ standard normal random variables that are independent of $(R_{Ni}\colon\,1\le 
i\le N, N\ge1)$,  $W_N^g$ is an $N\times N$ matrix defined by
\[
W_N^g(i,j)=G_{i\wedge j,i\vee j}\,,1\le i,j\le N\,,
\]
and 
\[
\Theta_N=\diag\left(\sqrt{R_{N1}},\ldots,\sqrt{R_{NN}}\right)\,,
\]
then the $\esd$ of $A_N/\sqrt{N\vep_N}$ is close to that of $\Theta_NW_N^g\Theta_N/\sqrt N$. 

The assumptions \eqref{random.eq1} and \eqref{random.eq.weak} imply that 
\[
\lim_{N\to\infty}\frac1N\Tr\left(\Theta_N^{2k}\right)=\int_{-\infty}^\infty x^k\mu_r(dx)\quad\text{ a.s.},\,k\ge1\,.
\]
Finally,  Fact \ref{fact:voiculescu} together with \eqref{random.eq1} shows the asymptotic free independence of $\Theta_N$ and $W_N^g$, that is,
\[
\lim_{N\to\infty}\esd\left(N^{-1/2}\Theta_NW_n^g\Theta_N\right)=\mu_r\boxtimes\mu_s \quad \text{ weakly in probability}\,,
\]
This completes the proof.
\end{proof}


\section{Applications}
\label{sec:appl}

In this section we discuss three applications, explained in Sections~\ref{sec:CRG}--\ref{sec:social}.


\subsection{Constrained random graphs}
\label{sec:CRG}

Let $\mathcal{S}_N$ be the set of all simple graphs on $N$ vertices. Suppose that we fix the 
degrees of the vertices, namely, vertex $i$ has degree $k^*_i$. Here, $k^*=(k^*_i\colon\,
1 \leq i \leq N)$ is a sequence of \emph{positive integers} of which we only require that they 
are \emph{graphical}, i.e., there is at least one simple graph with these degrees. The 
so-called \emph{canonical ensemble} $P_N$ is the unique probability distribution on 
$S_N$ with the following two properties: 
\begin{itemize}
\item[(I)] 
The \emph{average degree} of vertex $i$, defined by $\sum_{G \in \mathcal{S}_N} k_i(G) 
P_N(G)$, equals $k^*_i$ for all {\color{blue}$1 \leq i \leq N$}.          
\item[(II)] 
The \emph{entropy} of $P_N$, defined by $-\sum_{G \in \mathcal{S}_N} P_N(G) \log P_N(G)$, 
is maximal. 
\end{itemize}
The name canonical ensemble comes from Gibbs theory in equilibrium statistical physics. 
The probability distribution $P_N$ describes a random graph of which we have \emph{no 
prior information} other than the average degrees, and is called the \emph{soft configuration 
model}. It is known that, because of property (II), $P_N$ takes the form (\cite{jaynes:1957})
\[
P_N(G) = \frac{1}{Z_N(\theta^*)} \exp\left[-\sum_{i=1}^N \theta^*_i k_i(G)\right]\,, 
\qquad G \in \mathcal{S}_N\,,
\] 
where $\theta^*=(\theta^*_i\colon\,1 \leq i \leq N)$ is a sequence of real-valued Lagrange multipliers 
that must be chosen in such a way that property (I) is satisfied. The normalization constant 
$Z_N(\theta^*)$, which depends on $\theta^*$, is called the partition function in Gibbs theory. 

The matching of property (I) uniquely fixes $\theta^*$, namely, it turns out that 
(\cite{squartini:demol:denhollander:garlaschelli:2015}) 
\[
P_N(G) = \prod_{1\le i< j\le N}^N (p^*_{ij})^{A_N[G](i,j)}\,(1-p^*_{ij})^{1-A_N[G](i,j)}\,, 
\qquad G \in \mathcal{S}_N\,,
\]
where $A_N[G]$ is the adjacency matrix of $G$, and $p^*_{ij}$ represent a \emph{reparameterisation} 
of the Lagrange multipliers, namely,
\begin{equation}
\label{eq.pijpre}
p^*_{ij} = \frac{x^*_i x^*_j}{1+x^*_i x^*_j}\,, \qquad 1 \leq i \neq j \leq N\,,
\end{equation}
with $x^*_i = e^{-\theta^*_i}$. Thus, we see that $P_N$ is nothing other than an inhomogeneous 
Erd\H{o}s-R\'enyi random graph where the probability that vertices $i$ and $j$ are connected by 
an edge equals $p^*_{ij}$. In order to match property (I), these probabilities must satisfy
\begin{equation}
\label{eq.kirels}
k^*_i = \sum_{ j\in\{1,\ldots,N\}\setminus\{i\} } p^*_{ij}\,, \qquad 1 \leq i \leq N\,,
\end{equation}
which constitutes a set of $N$ equations for the $N$ unknowns $x_1^*,\ldots,x_N^*$. 

In order to state the next result, we need to make some assumptions on the sequence 
$(k^*_{Ni}\colon\,1\le i\le N)$. For the sake of notational simplification, the dependence 
on $N$ will be suppressed from the notation.

\begin{prop}
Let $(k^*_i\colon\,1\le i\le N)$ be a graphical sequence of positive integers. Define 
\[
m_N = \max_{1 \leq \ell \leq N} k^*_\ell\,.
\] 
Assume that
\begin{equation}
\label{eq.sparse}
\lim_{N\to\infty} m_N=\infty\,, \qquad \lim_{N\to\infty} m_N/\sqrt{N}=0\,,
\end{equation}
and
\begin{equation}
\label{eq.weak}
\lim_{N\to\infty} \frac1N\sum_{i=1}^N\delta_{k_i^*/m_N}=\mu_r \quad \text{ weakly}\,,
\end{equation}
for some probability measure $\mu_r$. Let $x^*_i$ and $p^*_{ij}$ be determined by \eqref{eq.pijpre} 
and \eqref{eq.kirels}. Let $A_N$ be the adjacency matrix of an inhomogeneous Erd\H{o}s-R\'enyi 
random graph on $N$ vertices, with $p^*_{ij}$ the probability of an edge being present between 
vertices $i$ and $j$ for $1 \le i\neq j \le N$. Then 
\[
\lim_{N\to\infty} \esd\bigl((N\vep_N)^{-1/2}A_N\bigr) = 
\mu_r\boxtimes\mu_s \quad \text{ weakly in probability}\,.
\]
\end{prop}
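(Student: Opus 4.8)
The plan is to derive this from Theorem~\ref{t.random} by exhibiting the canonical connection probabilities $p^*_{ij}$ as a uniformly small multiplicative perturbation of a genuine product $\varepsilon_N R_{Ni}R_{Nj}$. Write $\kappa_N:=\sum_{\ell=1}^N k^*_\ell$ and set
\[
R_{Ni}:=\frac{k^*_i}{m_N}\,,\qquad 1\le i\le N\,,\qquad\qquad \varepsilon_N:=\frac{m_N^2}{\kappa_N}\,.
\]
Since $1\le k^*_i\le m_N$ we have $R_{Ni}\in[0,1]$, so \eqref{random.eq1} holds with $C=1$, and \eqref{random.eq.weak} is exactly \eqref{eq.weak}; moreover $N\le\kappa_N\le Nm_N$, whence $\varepsilon_N\le m_N^2/N\to0$ and $N\varepsilon_N\ge m_N\to\infty$ by \eqref{eq.sparse}, so \eqref{eq.conditions} holds (and $\varepsilon_N\le1$ for $N$ large, which is all that is used). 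With these choices $\varepsilon_N R_{Ni}R_{Nj}=k^*_ik^*_j/\kappa_N$, the Chung--Lu probability, and $\tfrac1N\sum_i\delta_{R_{Ni}}\to\mu_r$ is precisely the $\mu_r$ of the statement. (This also fixes the normalisation $\varepsilon_N$ that is implicit in the proposition.)

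The key analytic input is the asymptotic product structure of the soft configuration model: under \eqref{eq.sparse}, the solution $(x^*_i)$ of \eqref{eq.pijpre}--\eqref{eq.kirels} satisfies $x^*_i=(1+o(1))\,k^*_i/\sqrt{\kappa_N}$ uniformly in $i$, and hence
\[
p^*_{ij}=(1+\delta_{ij})\,\frac{k^*_ik^*_j}{\kappa_N}=(1+\delta_{ij})\,\varepsilon_N R_{Ni}R_{Nj}\,,\qquad \max_{1\le i\neq j\le N}|\delta_{ij}|\to0\,,\quad N\to\infty\,;
\]
this is the content of \cite{squartini:demol:denhollander:garlaschelli:2015} (the relative error being in fact $O(m_N/N)$, comfortably within \eqref{eq.sparse}).

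It remains to transfer this to the spectral level. Realise, on one probability space, the adjacency matrix $A_N$ (edge probabilities $p^*_{ij}$) and the matrix $\tilde A_N$ (edge probabilities $\varepsilon_N R_{Ni}R_{Nj}$) through a single family of i.i.d.\ uniform random variables, one per unordered pair, so that $\{A_N(i,j)\neq\tilde A_N(i,j)\}$ has probability $|p^*_{ij}-\varepsilon_N R_{Ni}R_{Nj}|=\varepsilon_N R_{Ni}R_{Nj}|\delta_{ij}|$. Then, $A_N-\tilde A_N$ being symmetric with zero diagonal and each $(A_N(i,j)-\tilde A_N(i,j))^2$ a Bernoulli indicator,
\[
\frac1N\,\E\,\Tr\!\left[\bigl((N\varepsilon_N)^{-1/2}(A_N-\tilde A_N)\bigr)^2\right]
=\frac1{N^2\varepsilon_N}\sum_{i\neq j}\varepsilon_N R_{Ni}R_{Nj}|\delta_{ij}|
\le\frac{\max_{i\neq j}|\delta_{ij}|}{N^2}\Bigl(\sum_{i=1}^N R_{Ni}\Bigr)^{\!2}\le\max_{i\neq j}|\delta_{ij}|\to0\,,
\]
so by Fact~\ref{fact:HW} the L\'evy distance between $\esd((N\varepsilon_N)^{-1/2}A_N)$ and $\esd((N\varepsilon_N)^{-1/2}\tilde A_N)$ tends to $0$ in probability. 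Now $\tilde A_N$ is the adjacency matrix of an inhomogeneous Erd\H{o}s--R\'enyi graph with product connection probabilities $\varepsilon_N R_{Ni}R_{Nj}$ (a deterministic instance of the randomized setting, with hypotheses \eqref{eq.conditions} and \eqref{random.eq1}--\eqref{random.eq.weak} verified above), so Theorem~\ref{t.random} gives $\esd((N\varepsilon_N)^{-1/2}\tilde A_N)\to\mu_r\boxtimes\mu_s$ weakly in probability, and the proposition follows.

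The only substantial point is the uniform product-structure estimate: one must control the solution of the nonlinear system \eqref{eq.kirels} uniformly in $i$ (not merely on average), so that $\max_{i\neq j}|\delta_{ij}|\to0$, equivalently $\sum_{i<j}|p^*_{ij}-\varepsilon_N R_{Ni}R_{Nj}|=o(N^2\varepsilon_N)$. This is exactly where \eqref{eq.sparse} is used --- it forces $x^*_ix^*_j=O(m_N/N)\to0$ uniformly, so that both the passage from $p/(1+p)$ to $p$ and the linearisation of \eqref{eq.kirels} cost only a factor $1+o(1)$ --- and it is the step that leans on \cite{squartini:demol:denhollander:garlaschelli:2015}; the rest is a routine Hoffman--Wielandt perturbation together with an application of Theorem~\ref{t.random}.
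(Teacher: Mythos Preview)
Your proof is correct and follows the same overall strategy as the paper: set $\varepsilon_N=m_N^2/\sigma_N$ (your $\kappa_N$ is the paper's $\sigma_N$), invoke the cited result of \cite{squartini:demol:denhollander:garlaschelli:2015} to write $p^*_{ij}=(1+o(1))\,\varepsilon_N R_{Ni}R_{Nj}$ uniformly, use a Hoffman--Wielandt perturbation (Fact~\ref{fact:HW}) to pass to the exact product model, and finish via free probability.

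The one genuine difference is the order of operations in the perturbation step. The paper first Gaussianises $A_N$ (Lemmas~\ref{l.mc}--\ref{l.gauss}), then compares the Gaussian matrix with entries $\sqrt{p^*_{ij}}\,G_{ij}$ to the Gaussian matrix with entries $\sqrt{\varepsilon_N R_{Ni}R_{Nj}}\,G_{ij}$, and finally appeals directly to Voiculescu's asymptotic freeness (Fact~\ref{fact:voiculescu}) --- in effect reproving Theorem~\ref{t.random} in this special case. You instead couple the two Bernoulli adjacency matrices $A_N$ and $\tilde A_N$ at the level of the edge indicators and bound $\tfrac1N\E\Tr[(A_N-\tilde A_N)^2]$ directly, then apply Theorem~\ref{t.random} as a black box. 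Your route is more modular and slightly shorter; the paper's route makes the Gaussian reduction explicit. Both are valid, and the only substantive external input in either case is the uniform estimate $\max_{i\neq j}|\delta_{ij}|\to0$, which you correctly identify as the place where \eqref{eq.sparse} is used (the uniform bound on $x^*_ix^*_j$ is $O(m_N^2/N)$ rather than $O(m_N/N)$, but this does not affect the argument).
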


\begin{proof}
Abbreviate 
\[
\sigma_N = \sum_{1 \leq \ell \leq N} k^*_\ell\,, 
\]
It is known that (\cite{squartini:demol:denhollander:garlaschelli:2015})
\[
\max_{1 \leq \ell \leq N} x^*_\ell = o(1)\,,
\]
in which case \eqref{eq.pijpre} and \eqref{eq.kirels} give
\begin{equation}
\label{eq.xipij}
x^*_i = [1+o(1)]\,\frac{k^*_i}{\sqrt{\sigma_N}}\,,\qquad
p^*_{ij} = [1+o(1)]\,\frac{k^*_i k^*_j}{\sqrt{\sigma_N}}\,, \qquad N\to\infty\,,
\end{equation}
with the error term \emph{uniform} in $1 \le i \neq j \le N$. Pick 
\[
\vep_N = \frac{m_N^2}{\sigma_N} \,.
\] 
It follows from \eqref{eq.sparse} that
\[
\lim_{N\to\infty} \vep_N=0,\,\qquad \lim_{N\to\infty} N\vep_N=\infty\,.
\]
As in the proof of Theorem \ref{t.random}, Lemmas \ref{l.mc}--\ref{l.gauss} imply that the 
upper triangular entries of $A_N$ can be replaced by independent mean-zero normal random 
variables. In other words, if $(G_{ij}\colon\,1\le i\le j)$ are i.i.d.\ standard normal, and $A_N^g$ 
is the random matrix defined by
\[
A_N^g(i,j)=\sqrt{p^*_{ij}}\,G_{i\wedge j,i\vee j}\,,\qquad 1\le i, j\le N\,,
\]
with $p^*_{ii}=0$ for all $1 \le i \le N$, then $\esd\bigl((N\vep_N)^{-1/2}A_N\bigr)$ and 
$\esd\bigl((N\vep_N)^{-1/2}A_N^g\bigr)$ are asymptotically close. The second part of 
\eqref{eq.xipij} implies that
\[
\sqrt{p^*_{ij}}=\bigl[1+o(1)\bigr]\,\sqrt{\vep_N\,\frac{k_i^*k_j^*}{m_N^2}}\,,
\]
uniformly in $1\le i\neq j\le N$, and hence
\[
\sum_{i,j=1}^N\left[\sqrt{p^*_{ij}}-\sqrt{\vep_N\,\frac{k_i^*k_j^*}{m_N^2}}\,\right]^2
=o\left(N^2\vep_N\right)\,.
\]
In other words, if $\tilde A_N$ is defined by
\[
\tilde A_N(i,j)=\sqrt{\frac{k_i^*k_j^*}{m_N^2}}\,G_{i\wedge j,i\vee j}\,, \qquad 1\le i,j\le N\,,
\]
then 
\[
\lim_{N\to\infty}\frac1N\E\left[\Tr\bigl((N\vep_N)^{-1/2}A_N^g-N^{-1/2}\tilde A_N\bigr)^2\right]=0\,.
\]
Fact \ref{fact:HW} implies that 
\[
\lim_{N\to\infty}L\left(\esd\bigl((N\vep_N)^{-1/2}A_N^g\bigr),
\esd\bigl(N^{-1/2}\tilde A_N\bigr)\right)=0 \quad \text{ in probability}\,.
\]
Finally, by an appeal to Fact \ref{fact:voiculescu}, \eqref{eq.weak} implies that
\[
\lim_{N\to\infty}\esd\bigl(N^{-1/2}\tilde A_N\bigr)=\mu_r\boxtimes\mu_s \quad \text{ weakly in probability}\,,
\]
where $\mu_s$ is the standard semicircle law. Hence
\[
\lim_{N\to\infty}\esd\bigl((N\vep_N)^{-1/2}A_N\bigr)=\mu_r\boxtimes\mu_s \quad \text{ weakly in probability}\,.
\]
\end{proof}

\begin{remark} 
{\rm We look at a concrete example of a graphical sequence $(k^*_i\colon\,1\le i\le N)$ 
satisfying \eqref{eq.sparse}--\eqref{eq.weak}. For $N\ge1$, let
\[
k^*_i=\lfloor i^{1/3}\rfloor\,, \qquad 1\le i\le N\,.
\]
Then \cite[Theorem 7.12]{vanderhofstad:book} implies that $(k^*_i\colon\,1\le i\le N)$ is 
graphical for $N$ large enough. Since $m_N=\lfloor N^{1/3}\rfloor$, it is immediate that
\eqref{eq.sparse} holds and that
\[
\lim_{N\to\infty} \left(\frac1N\sum_{i=1}^N\delta_{k^*_i/m_N}\right)(\cdot)
=P(U^{1/3}\in\cdot\,) \quad \text{ weakly}
\]
with $U$ a standard uniform random variable.}
\end{remark}


\subsection{Chung-Lu graphs}
\label{sec:Chung-Lu} 

The following random graph introduced by \cite{chung:lu:2002} is similar to the one discussed 
in Section~\ref{sec:CRG}. For $N\ge1$, let $(d_{Ni}\colon\,1\le i\le N)$ be a sequence of positive 
real numbers. Abbreviate
\[
m_N=\max_{1\le i\le N}d_{Ni}\,, \qquad \sigma_N=\sum_{i=1}^Nd_{Ni}\,.
\]
Assume that
\[
\lim_{N\to\infty} \frac{m_N^2}{\sigma_N} = 0\,,\qquad \lim_{N\to\infty} N\frac{m_N^2}{\sigma_N} = \infty\,,
\]
and
\[
\lim_{N\to\infty}\frac1N\sum_{i=1}^N\delta_{d_{Ni}/m_N}=\mu_r \quad \text{ weakly}
\]
for some measure $\mu_r$ on $\bbr$. Consider an inhomogeneous Erd\H{o}s-R\'enyi graph on $N$ 
vertices where an edge exists between $i$ and $j$ with probability $d_{Ni}d_{Nj}/\sigma_N$, for 
$1\le i\neq j\le N$, which is called a Chung-Lu graph. If $A_N$ denotes its adjacency matrix, then 
the following result is a corollary of Theorem \ref{t.random}.

\begin{prop}
Under the hypotheses mentioned above,
\[
\lim_{N\to\infty}\esd\left((N\vep_N)^{-1/2}A_N\right)
= \mu_r\boxtimes\mu_s \quad \text{ weakly in probability}\,,
\]
where
\[
\vep_N=\frac{m_N^2}{\sigma_N}
\]
and $\mu_s$ is the standard semicircle law.
\end{prop}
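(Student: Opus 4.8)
The plan is to observe that a Chung--Lu graph is simply a special case of the randomized inhomogeneous Erd\H{o}s--R\'enyi graph of Section~\ref{subsec:random}, with \emph{deterministic} weights, so that Theorem~\ref{t.random} applies essentially verbatim. First I would set $R_{Ni}=d_{Ni}/m_N$ for $1\le i\le N$ and recall that $\vep_N=m_N^2/\sigma_N$, and note the algebraic identity
\[
\vep_N\,R_{Ni}\,R_{Nj}=\frac{m_N^2}{\sigma_N}\cdot\frac{d_{Ni}}{m_N}\cdot\frac{d_{Nj}}{m_N}=\frac{d_{Ni}d_{Nj}}{\sigma_N}\,,
\]
so that the probability that vertices $i$ and $j$ are joined in the Chung--Lu model is exactly $\vep_N R_{Ni}R_{Nj}$, which is the product structure required in Section~\ref{subsec:random}.

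Next I would check the hypotheses of Theorem~\ref{t.random}. Since $d_{Ni}\le m_N$ by definition of $m_N$, we have $R_{Ni}\in[0,1]$, so \eqref{random.eq1} holds with $C=1$ (a deterministic array trivially satisfies an almost-sure bound). The assumed weak convergence $\tfrac1N\sum_{i=1}^N\delta_{d_{Ni}/m_N}\to\mu_r$ is precisely \eqref{random.eq.weak}, with ``weakly a.s.'' reducing to ordinary weak convergence because the array is deterministic, and $\mu_r$ is then automatically concentrated on $[0,1]$. The two conditions in \eqref{eq.conditions}, namely $\vep_N\to0$ and $N\vep_N\to\infty$, are exactly the two displayed assumptions $m_N^2/\sigma_N\to0$ and $Nm_N^2/\sigma_N\to\infty$. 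Finally, \eqref{random.new} with $C=1$ asks for $\vep_N\le1$, which fails for at most finitely many $N$ since $\vep_N\to0$, hence entails no loss of generality; in particular $\vep_N R_{Ni}R_{Nj}\le\vep_N\le1$ for $N$ large, so the graph is well defined. With all hypotheses verified, Theorem~\ref{t.random} yields $\esd((N\vep_N)^{-1/2}A_N)\to\mu_r\boxtimes\mu_s$ weakly in probability, which is the claim.

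There is essentially no obstacle here: the analytic content has already been absorbed into Theorem~\ref{t.random}, and the only point requiring care is the bookkeeping of the normalization --- confirming that $m_N/\sqrt{\sigma_N}$ is the correct scale and that $R_{Ni}\le1$ so that the limiting weight distribution lives on a compact set. If one prefers a self-contained argument not invoking the randomized theorem, one can instead repeat the Gaussianization of Lemma~\ref{l.gauss} followed by the asymptotic-freeness input Fact~\ref{fact:voiculescu}, exactly as in the proof of the Proposition in Section~\ref{sec:CRG}, replacing $k_i^\ast/m_N$ there by $d_{Ni}/m_N$; this route makes transparent that the deterministic case does not require the full ``weakly a.s.'' strength of Theorem~\ref{t.random}.
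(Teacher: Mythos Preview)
Your proposal is correct and matches the paper's approach: the paper explicitly states that this proposition is a corollary of Theorem~\ref{t.random} and gives no further proof, which is precisely what you do by setting $R_{Ni}=d_{Ni}/m_N$ and verifying the hypotheses \eqref{eq.conditions}, \eqref{random.eq1}, \eqref{random.eq.weak}. Your bookkeeping is accurate, and the optional self-contained route you sketch via Lemma~\ref{l.gauss} and Fact~\ref{fact:voiculescu} is also valid (and is essentially how Theorem~\ref{t.random} itself is proved).
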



\subsection{Social networks}
\label{sec:social}

Consider a community consisting of $N$ individuals. Data is available on whether the $i$-th individual 
and the $j$-th individual are acquainted, for every pair $\{i,j\}$ with $1 \leq  i,j \leq N$. Based on this 
data, the \emph{sociability pattern} of the community has to be inferred statistically. Examples arise 
in social networks and collaboration networks.

The above situation can be modeled in several ways, one being the following. Denote by $\rho$ the 
\emph{sociability distribution} of the community, which is a compactly supported probability measure 
on $[0,\infty)$. Let $(R_i)_{1 \leq i \leq N}$ be i.i.d.\ random variables drawn from $\rho$. Think of 
$R_i$ as the \emph{sociability index} of the $i$-th individual. Fix $\vep_N>0$ such that $\vep_N m^2 
\leq 1$, where $m$ is the supremum of the support of $\rho$, so that
\begin{equation}
0 \leq \vep_N R_i R_j \leq 1\,, \qquad 1 \leq i \neq j \leq N\,.
\end{equation}
Suppose that, conditional on $(R_i)_{1 \leq i \leq N}$, the $i$-th and the $j$-th individual are acquainted 
with probability $\vep_NR_iR_j$. In other words, the graph in which the vertices are individuals and the 
edges are mutual acquaintances is an inhomogeneous Erd\H{o}s-R\'enyi random graph $\bbg_N$ with 
random connection parameters that are controlled by $\nu$. The data that is available is the adjacency 
matrix $A_N$ of this graph. The goal is to draw information about $\rho$ from this data. This statistical 
inference problem boils down to estimating $\rho$ from $A_N$.  Without loss of generality we assume 
that $\rho$ is standardized, i.e.,
\begin{equation}
\label{eq.mean}
\int_0^\infty x\rho(dx)=1\,.
\end{equation}

\begin{prop}\label{prop.stat}
Under the assumptions $N^{-1}\ll\vep_N\ll1$ and \eqref{eq.mean},
\[
\lim_{N\to\infty}\esd\left(\sqrt{\frac{ N}{\Tr(A_N^2)}}\,A_N\right)
=\rho\boxtimes\mu_s \quad \text{ weakly in probability}\,,
\]
where $\mu_s$ is the standard semicircle law.
\end{prop}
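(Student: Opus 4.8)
The plan is to deduce Proposition~\ref{prop.stat} from Theorem~\ref{t.random} by verifying its hypotheses for the present random connection parameters and by showing that the random normalization $\sqrt{N/\Tr(A_N^2)}$ is asymptotically equivalent to the deterministic normalization $(N\vep_N)^{-1/2}$ appearing in Theorem~\ref{t.random}. Concretely, set $R_{Ni}=R_i$ for all $N$; these are i.i.d.\ from the compactly supported law $\rho$, so condition \eqref{random.eq1} holds with $C=m$, and \eqref{random.new} follows from the assumption $\vep_Nm^2\le1$. The empirical measure $\tfrac1N\sum_{i=1}^N\delta_{R_i}$ converges weakly almost surely to $\rho$ by the strong law of large numbers (Glivenko--Cantelli), which is exactly \eqref{random.eq.weak} with $\mu_r=\rho$. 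Finally $N^{-1}\ll\vep_N\ll1$ is precisely \eqref{eq.conditions}. Hence Theorem~\ref{t.random} applies and gives
\[
\lim_{N\to\infty}\esd\left((N\vep_N)^{-1/2}A_N\right)=\rho\boxtimes\mu_s\quad\text{weakly in probability}\,.
\]

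The remaining step is to replace the deterministic scaling $(N\vep_N)^{-1/2}$ by $\sqrt{N/\Tr(A_N^2)}$. Since $\Tr(A_N^2)=\sum_{i\neq j}A_N(i,j)^2=\sum_{i\neq j}A_N(i,j)$ (the entries being $0/1$), conditioning on $(R_i)$ we have $\E[\Tr(A_N^2)\mid(R_i)]=\sum_{i\neq j}\vep_NR_iR_j=\vep_N\bigl[(\sum_i R_i)^2-\sum_iR_i^2\bigr]$. By the law of large numbers $\tfrac1N\sum_iR_i\to\int x\,\rho(dx)=1$ a.s.\ using \eqref{eq.mean}, and $\tfrac1N\sum_iR_i^2\to\int x^2\rho(dx)<\infty$ a.s.; therefore $\E[\Tr(A_N^2)\mid(R_i)]=[1+o(1)]N^2\vep_N$ a.s. A variance estimate for $\Tr(A_N^2)$ conditional on $(R_i)$ (the summands are independent Bernoulli's, so the conditional variance is at most $\sum_{i\neq j}\vep_NR_iR_j=O(N^2\vep_N)$, which is $o(N^4\vep_N^2)$ since $N\vep_N\to\infty$) together with Chebyshev's inequality yields
\[
\frac{\Tr(A_N^2)}{N^2\vep_N}\longrightarrow1\quad\text{in probability}\,,
\]
whence $\sqrt{N/\Tr(A_N^2)}=[1+o_P(1)]\,(N\vep_N)^{-1/2}$.

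To conclude, I would combine this multiplicative equivalence with the convergence of $\esd\bigl((N\vep_N)^{-1/2}A_N\bigr)$ using a standard perturbation argument: if $c_N/(N\vep_N)^{-1/2}\to1$ in probability and $\esd(c_N'A_N)\to\lambda$ weakly in probability for the deterministic $c_N'=(N\vep_N)^{-1/2}$, then $\esd(c_NA_N)\to\lambda$ weakly in probability as well, because multiplying a matrix by a scalar close to $1$ moves each eigenvalue by a factor close to $1$, and the spectral norm of $(N\vep_N)^{-1/2}A_N$ is tight (its ESD converges to the compactly supported $\rho\boxtimes\mu_s$, and one can control the largest eigenvalue, or simply argue via bounded test functions that Lipschitz-continuous functionals are unaffected). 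Taking $\lambda=\rho\boxtimes\mu_s$ gives the claim. The only mildly delicate point is the last perturbation step: one must ensure that the random rescaling does not create mass escaping to infinity. This is handled by the tightness of $\esd\bigl((N\vep_N)^{-1/2}A_N\bigr)$, which is automatic from the established weak convergence to a (compactly supported) probability measure; alternatively one can run the whole argument at the level of moments, noting that for every fixed even $k$ one has $\tfrac1N\Tr\bigl((c_NA_N)^k\bigr)=(c_N^2 N\vep_N)^{k/2}\cdot\tfrac1N\Tr\bigl(((N\vep_N)^{-1/2}A_N)^k\bigr)$, and $(c_N^2N\vep_N)^{k/2}\to1$ in probability, so the moments converge to those of $\rho\boxtimes\mu_s$, which is moment-determinate since it is compactly supported.
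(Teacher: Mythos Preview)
Your proposal is correct and follows essentially the same approach as the paper: apply Theorem~\ref{t.random} with $R_{Ni}=R_i$ and $\mu_r=\rho$, then show $\Tr(A_N^2)/(N^2\vep_N)\to1$ in probability so that the random normalization can replace $(N\vep_N)^{-1/2}$. The only organizational difference is that the paper computes the unconditional mean and variance of $\Tr(A_N^2)$ directly (obtaining $\E[\Tr(A_N^2)]=\vep_N N(N-1)$ and $\Var[\Tr(A_N^2)]=O(N^3\vep_N^2)$ via the variance decomposition), whereas you condition on $(R_i)$ first; both routes yield the same conclusion. One small caution: your parenthetical remark that ``the spectral norm of $(N\vep_N)^{-1/2}A_N$ is tight'' is not justified by ESD convergence alone and is in fact generally false here, but you correctly note that the perturbation step goes through via tightness of the ESDs (or via moments), which is all that is needed.
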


\begin{proof}
It is immediate that
\[
\lim_{N\to\infty}\frac1N\sum_{i=1}^N\delta_{R_i}=\rho \quad \text{ weakly, almost surely}\,.
\]
Theorem \ref{t.random} implies that if $N^{-1}\ll\vep_N\ll1$, then
\begin{equation}
\label{ps.eq0}
\lim_{N\to\infty}\esd\left((N\vep_N)^{-1/2}A_N\right)=\rho\boxtimes\mu_s \quad \text{ weakly in probability}\,.
\end{equation}
Since $A_N(i,j)$ is either $0$ or $1$,
\[
\E\left(\Tr(A_N^2)\right)=\sum_{i=1}^N\sum_{j=1}^N\E\left[A_N(i,j)\right]
=\sum_{1\le i\neq j\le N}\vep_N\E(R_iR_j)=\vep_NN(N-1)\,,
\]
where the last equality follows from \eqref{eq.mean}. Consequently,
\begin{equation}
\label{ps.eq1}
\lim_{N\to\infty}\frac1{N^2\vep_N}\E\left(\Tr(A_N^2)\right)=1\,.
\end{equation}
The fact that the variance equals the sum of the expectation of the conditional variance and 
the variance of the conditional expectation, implies that
\begin{align*}
\Var\left(\Tr(A_N^2)\right)
&=\Var\left(2\sum_{1\le i<j\le N}A_N(i,j)\right)\\
&=4E\left(\sum_{1\le i<j\le N}\vep_NR_iR_j(1-\vep_NR_iR_j)\right)
+4\Var\left(\sum_{1\le i<j\le N}\vep_NR_iR_j\right)\\
&=O(N^2\vep_N)+4\vep_N^2\sum_{1\le i<j\le N}\sum_{1\le k<l\le N}\Cov(R_iR_j,R_kR_l)\\
&=O(N^3\vep_N^2)\,,
\end{align*}
where the last line follows from the observation that if $i,j,k,l$ are distinct, then $\Cov(R_iR_j,R_kR_l)$ 
vanishes. Hence,
\[
\lim_{N\to\infty}\Var\left(\frac1{N^2\vep_N}\Tr\left(A_N^2\right)\right)=0\,.
\]
The above in conjunction with \eqref{ps.eq1} shows that
\[
\lim_{N\to\infty}\frac1{N^2\vep_N}\Tr\left(A_N^2\right)=1 \quad \text{ in probability}\,.
\]
This, together with \eqref{ps.eq0}, completes the proof.
\end{proof}

Thus, $\rho\boxtimes\mu_s$ can in principle be statistically estimated from $A_N$. Subsequently, $\rho$ 
can be computed because the moments of $\rho\boxtimes\mu_s$ are functions of the moments of $\rho$, 
as shown below. Eq.(14.5) on page 228 of \cite{nica:speicher:2006} tells us that for $n\ge1$,
\[
\int_{-\infty}^\infty x^{2n}\,\rho\boxtimes\mu_s(dx)
=\sum_{\sigma\in NC_2(2n)}\prod_{j=1}^{n+1}\int_{-\infty}^\infty x^{l_j(\sigma)}\rho(dx)\,,
\]
where $l_1(\sigma),\ldots,l_{n+1}(\sigma)$ are block sizes of $K(\sigma)$, the Kreweras complement of 
$\sigma$. With the help of the above, the $n$-th moment of $\rho$ can be written in terms of the $2n$-th 
moment of $\rho\boxtimes\mu_s$, and the first $n-1$ moments of $\rho$. Therefore, the moments of 
$\rho$ can be recursively computed from those of $\rho\boxtimes\mu_s$. Since $\rho$ is compactly 
supported, it can be computed from its moments. 


\appendix

\section{Basic facts}
\label{sec:facts}

The following is \cite[Corollary A.41]{bai:silverstein:2010}, and is also a corollary of the 
Hoffman-Wielandt inequality.

\begin{fact}\label{fact:HW}
If $L$ denotes the L\'evy distance between two probability measures, then for $N\times N$ 
symmetric matrices $A$ and $B$,
\[
L^3\left(\esd(A),\esd(B)\right)\le\frac1N\Tr\left[(A-B)^2\right]\,.
\]
\end{fact}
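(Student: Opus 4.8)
The plan is to derive the bound from two ingredients: the Hoffman--Wielandt inequality, and an elementary one-dimensional estimate relating the L\'evy distance of two empirical measures to the mean-square distance between their atoms under an arbitrary pairing.

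First I would isolate the one-dimensional lemma: if $F$ and $G$ are the empirical distribution functions of real numbers $a_1,\ldots,a_N$ and $b_1,\ldots,b_N$ respectively, then
\[
L^3(F,G)\le\frac1N\sum_{i=1}^N(a_i-b_i)^2=:\eta^3 .
\]
Assuming $\eta>0$ (otherwise $F=G$ and there is nothing to prove), it suffices, by the symmetry of the L\'evy distance in its two arguments, to verify that $F(x)\le G(x+\eta)+\eta$ for every $x\in\bbr$. Fix $x$, put $I=\{i:a_i\le x\}$, and call $i\in I$ \emph{bad} if $b_i>x+\eta$. For a bad index one has $a_i\le x<x+\eta<b_i$, hence $(a_i-b_i)^2>\eta^2$, so a Markov-type count gives $\#\{\text{bad }i\}\le\eta^{-2}\sum_{i\in I}(a_i-b_i)^2\le\eta^{-2}\cdot N\eta^3=N\eta$. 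Consequently $\#\{i:b_i\le x+\eta\}\ge\#I-N\eta$, i.e. $G(x+\eta)\ge F(x)-\eta$, which is the desired inequality; the remaining inequalities in the definition of the L\'evy distance follow by interchanging the roles of $(a_i)$ and $(b_i)$ and shifting $x$. Hence $L(F,G)\le\eta$.

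To finish, I would apply this with $a_i=\lambda_i$ and $b_i=\mu_i$, where $\lambda_1\ge\cdots\ge\lambda_N$ and $\mu_1\ge\cdots\ge\mu_N$ are the eigenvalues of $A$ and $B$ listed in decreasing order, so that $\esd(A)$ and $\esd(B)$ have these atoms. The Hoffman--Wielandt inequality for real symmetric matrices asserts precisely that this monotone matching is optimal, giving
\[
\sum_{i=1}^N(\lambda_i-\mu_i)^2\le\Tr\left[(A-B)^2\right],
\]
and combining the two displays yields $L^3(\esd(A),\esd(B))\le\frac1N\Tr[(A-B)^2]$. The only non-elementary input is the Hoffman--Wielandt inequality itself, which is classical (it is typically proved by expanding $\Tr[(A-B)^2]$ in eigenbases and minimizing the resulting bilinear form over doubly stochastic matrices via Birkhoff's theorem); since the statement is quoted from \cite{bai:silverstein:2010} I would simply cite it rather than reprove it, so there is no genuine obstacle here --- the one-dimensional counting step is the only part that needs to be written out with any care.
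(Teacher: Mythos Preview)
Your proof is correct and is exactly the standard argument behind the cited reference. The paper does not give its own proof of this fact, merely quoting \cite[Corollary~A.41]{bai:silverstein:2010} and remarking that it is a corollary of the Hoffman--Wielandt inequality; your write-up fills in precisely those details (the elementary counting bound on the L\'evy distance between two empirical measures, followed by Hoffman--Wielandt applied to the ordered eigenvalues).
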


The following is a consequence of the Minkowski and $k$-Hoffman-Wielandt inequalities. The
latter can be found in Exercise 1.3.6 of \cite{tao2012topics}.

\begin{fact}
\label{fact0}
For real symmetric matrices $A$ and $B$ of the same order, and an even positive integer $k$,
\[
\left|\Tr^{1/k}(A^k)-\Tr^{1/k}(B^k)\right|\le\Tr^{1/k}\left[(A-B)^k\right]\,.
\]
\end{fact}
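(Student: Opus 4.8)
The plan is to recast both sides in terms of the vectors of ordered eigenvalues, splitting the inequality into a purely vectorial estimate (Minkowski) and a matrix estimate comparing sorted eigenvalue differences with the eigenvalues of the difference ($k$-Hoffman--Wielandt).

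First I would record that, since $k$ is even, for any real symmetric $N\times N$ matrix $M$ one has $\Tr(M^k)=\sum_{i=1}^N\lambda_i(M)^k=\sum_{i=1}^N|\lambda_i(M)|^k\ge 0$, so $\Tr^{1/k}(M^k)$ is well defined and equals $\|\lambda(M)\|_k$, the $\ell^k$-norm of the eigenvalue vector $\lambda(M)=(\lambda_1(M),\ldots,\lambda_N(M))$ (listed with multiplicity); equivalently, $M\mapsto\Tr^{1/k}(M^k)$ is the Schatten-$k$ norm restricted to symmetric matrices.

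Next, enumerate the eigenvalues of $A$ and of $B$ in non-increasing order, $\lambda_1(A)\ge\cdots\ge\lambda_N(A)$ and $\lambda_1(B)\ge\cdots\ge\lambda_N(B)$, and put $a=(\lambda_1(A),\ldots,\lambda_N(A))$ and $b=(\lambda_1(B),\ldots,\lambda_N(B))$. Minkowski's inequality for the $\ell^k$-norm on $\R^N$, applied in the forms $\|a\|_k\le\|b\|_k+\|a-b\|_k$ and $\|b\|_k\le\|a\|_k+\|a-b\|_k$, yields
\[
\left|\Tr^{1/k}(A^k)-\Tr^{1/k}(B^k)\right|
=\bigl|\,\|a\|_k-\|b\|_k\,\bigr|
\le\|a-b\|_k
=\Bigl(\sum_{i=1}^N\bigl|\lambda_i(A)-\lambda_i(B)\bigr|^k\Bigr)^{1/k}.
\]
Then I would invoke the $k$-Hoffman--Wielandt inequality (Exercise 1.3.6 of \cite{tao2012topics}): for Hermitian matrices whose eigenvalues are listed in the same order,
\[
\sum_{i=1}^N\bigl|\lambda_i(A)-\lambda_i(B)\bigr|^k\le\Tr\bigl(|A-B|^k\bigr).
\]
Since $A-B$ is real symmetric and $k$ is even, $|A-B|^k$ and $(A-B)^k$ have the same trace, so the right-hand side equals $\Tr\bigl((A-B)^k\bigr)$. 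Combining the last two displays gives $\bigl|\Tr^{1/k}(A^k)-\Tr^{1/k}(B^k)\bigr|\le\Tr^{1/k}\bigl((A-B)^k\bigr)$, as claimed.

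The only point that needs care is the use of $k$-Hoffman--Wielandt with the eigenvalues of $A$ and $B$ sorted in the \emph{same} order: this is exactly the matching for which the permutation-optimised Hoffman--Wielandt bound is attained in the Hermitian case, so no slack is lost there. The rest is bookkeeping. (One could also bypass the eigenvalue vectors entirely by observing that $M\mapsto\Tr^{1/k}(M^k)$ is a genuine norm for $k\ge 1$ and appealing to its reverse triangle inequality, but the route above is the one indicated by the hint.)
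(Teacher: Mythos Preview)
Your proof is correct and follows exactly the route the paper indicates: the reverse triangle inequality for the $\ell^k$-norm (Minkowski) applied to the ordered eigenvalue vectors, followed by the $k$-Hoffman--Wielandt inequality from Exercise~1.3.6 of \cite{tao2012topics}, together with the observation that $\Tr\bigl(|A-B|^k\bigr)=\Tr\bigl((A-B)^k\bigr)$ for even $k$. The paper states the fact without writing out these steps, so your argument is precisely the intended one.
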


\begin{defn}
A \emph{non-commutative probability space (NCP)} $(\A,\phi)$ is a unital\\ $*$-algebra $\A$ 
equipped with a linear functional $\phi\colon\,\A\to\bbc$ that is unital, i.e.,
\[
\phi(\one)=1\,,
\]
and positive, i.e.,
\[
\phi(a^*a)\ge0\text{ for all }a\in\A\,.
\]
An NCP $(\A,\phi)$ is \emph{tracial} if 
\[
\phi(ab)=\phi(ba),\,a,b\in\A\,.
\]
\end{defn}

\begin{fact}
\label{fact:ncp}
Suppose that, for every $n\in\bbn$, $(\A_n,\phi_n)$ is a tracial NCP, and there exist self-adjoint 
$a_{n1},\ldots,a_{nk}\in\A_n$ such that, for every polynomial $p$ in $k$ non-commuting variables,
\begin{equation}
\label{fact:ncp.eq0}
\lim_{n\to\infty}\phi_n\left(p\bigl(a_{n1},\ldots,a_{nk}\bigr)\right)=\alpha_p\in\bbc\,.
\end{equation}
Then there exists a tracial NCP $(\A_\infty,\phi_\infty)$ and self-adjoint $a_{\infty1},\ldots,a_{\infty k}
\in\A_\infty$ such that, for every polynomial $p$ in $k$ non-commuting variables,
\[
\phi_\infty\left(p\bigl(a_{\infty 1},\ldots,a_{\infty k}\bigr)\right)=\alpha_p\,.
\]
Furthermore, if 
\begin{equation}
\label{fact.ncp.eq1}
\sup_{1\le i\le k,\,j\ge1}\left[\phi_\infty\left(a_{\infty i}^{2j}\right)\right]^{1/2j}<\infty\,,
\end{equation}
then $(\A_\infty,\phi_\infty)$ can be embedded into a $W^*$-probability space.
\end{fact}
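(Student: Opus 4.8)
The plan is to treat the two assertions of Fact~\ref{fact:ncp} separately: the first is a soft ``limit of moments'' construction, and the second is a GNS argument whose only real content is a boundedness estimate powered by \eqref{fact.ncp.eq1}.

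For the first assertion I would take $\A_\infty$ to be the free unital $*$-algebra $\bbc\langle X_1,\ldots,X_k\rangle$ on $k$ self-adjoint generators, and define $\phi_\infty$ on the distinguished basis of words by $\phi_\infty(X_{i_1}\cdots X_{i_m})=\lim_{n\to\infty}\phi_n(a_{ni_1}\cdots a_{ni_m})$, extended linearly; by \eqref{fact:ncp.eq0} this limit exists and $\phi_\infty(p)=\alpha_p$ when $p$ is the corresponding monomial. Since each evaluation map $\bbc\langle X_1,\ldots,X_k\rangle\to\A_n$ sending $X_i\mapsto a_{ni}$ is a $*$-homomorphism (the generators and the $a_{ni}$ all being self-adjoint), positivity, unitality and traciality of $\phi_n$ pass to the limit: $\phi_\infty(p^*p)=\lim_n\phi_n\bigl(p(a_{n1},\ldots,a_{nk})^*p(a_{n1},\ldots,a_{nk})\bigr)\ge0$, $\phi_\infty(\one)=1$, and $\phi_\infty(pq)=\phi_\infty(qp)$. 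Thus $(\A_\infty,\phi_\infty)$ is a tracial NCP and $a_{\infty i}:=X_i$ realises the moments $\alpha_p$. (If one prefers, one may quotient $\A_\infty$ by $\{a:\phi_\infty(a^*a)=0\}$, but this is unnecessary, as the definition of an NCP does not ask for faithfulness.)

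For the second assertion I would run the GNS construction on $(\A_\infty,\phi_\infty)$: equip $\A_\infty$ with the sesquilinear form $\langle a,b\rangle=\phi_\infty(b^*a)$, note that $N=\{a:\phi_\infty(a^*a)=0\}$ is a left ideal by Cauchy--Schwarz, so left multiplication descends to $\A_\infty/N$, complete to a Hilbert space $\mathcal H$, and let $\pi$ be the resulting $*$-representation of $\A_\infty$ by symmetric operators on the dense domain $\A_\infty/N$, with cyclic unit vector $\xi=\widehat{\one}$, so $\phi_\infty(a)=\langle\pi(a)\xi,\xi\rangle$. The crux is to show each $\pi(a_{\infty i})$ extends to a bounded operator, and this is exactly where \eqref{fact.ncp.eq1} enters. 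Set $M=\sup_{i,j}[\phi_\infty(a_{\infty i}^{2j})]^{1/2j}<\infty$, so $\phi_\infty(a_{\infty i}^{2j})=\lim_n\phi_n(a_{ni}^{2j})\le M^{2j}$. A non-commutative H\"older inequality valid for any tracial positive linear functional (iterated Cauchy--Schwarz; see \cite[Section~5.2.3]{anderson:guionnet:zeitouni:2010} and \cite{nica:speicher:2006}) then gives $|\phi_\infty(a_{\infty i_1}\cdots a_{\infty i_m})|\le M^m$ for every word, whence for $a=\sum_w c_w w\in\A_\infty$ and $p\ge1$ one gets $\|\pi(a_{\infty i})^p\widehat a\|^2=\phi_\infty(a^*a_{\infty i}^{2p}a)\le M^{2p}K_a^2$ with $K_a=\sum_w|c_w|M^{|w|}<\infty$; iterating $\|T\eta\|^2\le\|T^2\eta\|\,\|\eta\|$ for the symmetric operator $T=\pi(a_{\infty i})$ along the exponents $2^m$ and letting $m\to\infty$ yields $\|\pi(a_{\infty i})\widehat a\|\le M\|\widehat a\|$, so $\pi(a_{\infty i})$ extends to a bounded self-adjoint operator of norm $\le M$. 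I expect this H\"older-type word estimate to be the only genuinely nontrivial point, and it is also where the marginal moment bound of \eqref{fact.ncp.eq1}, rather than mere existence of mixed moments, is essential.

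Finally I would let $\mathcal M\subseteq B(\mathcal H)$ be the von Neumann algebra generated by $\pi(a_{\infty 1}),\ldots,\pi(a_{\infty k})$ and the identity, and set $\tau(T)=\langle T\xi,\xi\rangle$. Then $\mathcal M$ is a $W^*$-algebra and $\tau$ a vector state, so $(\mathcal M,\tau)$ is a $W^*$-probability space in the sense of the paper, and $\pi$ is a trace-preserving $*$-homomorphism of $(\A_\infty,\phi_\infty)$ into it sending each $a_{\infty i}$ to a self-adjoint element, so that $\tau(p(\pi(a_{\infty 1}),\ldots,\pi(a_{\infty k})))=\alpha_p$ for every polynomial $p$; traciality of $\tau$ on $\mathcal M$ follows from that of $\phi_\infty$ on the weakly dense $*$-subalgebra generated by the $\pi(a_{\infty i})$ together with normality of the vector state and the Kaplansky density theorem. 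This exhibits $(\A_\infty,\phi_\infty)$ inside a $W^*$-probability space and completes the argument.
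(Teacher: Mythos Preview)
Your proof is correct and, for the first assertion, takes exactly the same route as the paper: set $\A_\infty=\bbc\langle X_1,\ldots,X_k\rangle$, declare $\phi_\infty(p)=\alpha_p$, and read off positivity, unitality and traciality from the corresponding properties of the $\phi_n$.

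For the second assertion your argument is considerably more detailed than the paper's. The paper merely observes that \eqref{fact.ncp.eq1} forces the $a_{\infty i}$ to be bounded and then alludes in one line to passing ``from polynomials to continuous functions with the help of the Bolzano--Weierstrass theorem'' (presumably Stone--Weierstrass is intended) in order to land in a $W^*$-probability space. You instead carry out the GNS construction explicitly and prove boundedness of each $\pi(a_{\infty i})$ via the iterated Cauchy--Schwarz estimate $\|T\widehat a\|\le\|T^{2^m}\widehat a\|^{1/2^m}\|\widehat a\|^{1-1/2^m}$ combined with the word bound $|\phi_\infty(a_{\infty i_1}\cdots a_{\infty i_m})|\le M^m$; this is the standard rigorous route and makes transparent exactly why the marginal moment growth condition \eqref{fact.ncp.eq1} (rather than mere finiteness of mixed moments) is the right hypothesis. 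Your closing step, extending traciality from the dense $*$-subalgebra to its weak closure via normality of the vector state and Kaplansky density, is likewise a point the paper leaves implicit. In short: same construction in part one, and in part two you supply the argument that the paper only gestures at.
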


\begin{proof}
Let
\[
\A_\infty=\bbc[X_1,\ldots,X_k]\,,
\]
the set of all polynomials in $k$ non-commuting variables. For a monomial 
\[
p=\alpha X_{i_1}\ldots X_{i_m}\,,
\]
define
\[
p^*=\overline\alpha X_{i_m}\ldots X_{i_1}\,.
\]
This defines the $*$-operation on the whole of $\A$. Let
\[
\phi_\infty(p)=\alpha_p\text{ for all }p\in\A_\infty\,.
\]
It is immediate from \eqref{fact:ncp.eq0} that $\phi_\infty$ is positive and unital, i.e., 
$(\A_\infty,\phi_\infty)$ is an NCP. The desired conclusions are ensured by defining
\[
a_{\infty 1}=X_1,\,\ldots,\,a_{\infty k}=X_k\,.
\]
Finally, \eqref{fact.ncp.eq1} implies that $a_{\infty,1},\ldots,a_{\infty,k}$ are bounded. 
Hence, by going from polynomials to continuous functions with the help of the 
Bolzano-Weierstrass theorem, we can embed $(\A_\infty,\phi_\infty)$ into a $W^*$-probability 
space. 
\end{proof}

The next fact follows from \cite[Theorem 4.20]{mingo:speicher:2017} (which is due to Voiculescu)
and the discussion immediately following it.

\begin{fact}
\label{fact:voiculescu}
Suppose that $W_N$ is an $N\times N$ scaled standard Gaussian Wigner matrix, i.e., a 
symmetric matrix whose upper triangular entries are i.i.d.\ normal with mean zero and variance 
$1/N$. Let $D_N^1$ and $D_N^2$ be (possibly random) $N\times N$ symmetric matrices 
such that there exists a deterministic $C$ satisfying
\[
\sup_{N\ge1,i=1,2}\|D_N^i\|\le C<\infty\,,
\]
where $\|\cdot\|$ denotes the usual matrix norm (which is same as the largest singular value 
for a symmetric matrix). Furthermore, assume that there is a $W^*$-probability space $(\A,\tau)$ 
in which there are self-adjoint elements $d_1$ and $d_2$ such that, for any polynomial $p$ in 
two variables, it
\[
\lim_{N\to\infty} \frac1N\Tr\left(p\left(D_N^1,D_N^2\right)\right) = \tau\left(p(d_1,d_2)\right) \text{ a.s.}
\]
Finally, suppose that $(D_N^1,D_N^2)$ is independent of $W_N$. Then there exists a self-adjoint 
element $s$ in $\A$ (possibly after expansion) that has the standard semicircle distribution and is 
freely independent of $(d_1,d_2)$, and is such that 
\[
\lim_{N\to\infty} \frac1N\Tr\left(p\left(W_N,D_N^1,D_N^2\right)\right) 
= \tau\left(p(s,d_1,d_2)\right)\text{ a.s.}
\]
for any polynomial $p$ in three variables.
\end{fact}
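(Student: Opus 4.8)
The plan is to reduce the statement to Voiculescu's asymptotic freeness theorem for a Gaussian Wigner matrix and \emph{deterministic} matrices, and then to account for the two extra features of the present formulation — that $D_N^1,D_N^2$ are allowed to be random, and that the convergence is required almost surely — by a conditioning argument and a concentration estimate, respectively.

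First I would dispose of the randomness of the $D$'s. Since $(D_N^1,D_N^2\colon N\ge1)$ is independent of $(W_N\colon N\ge1)$, I condition on the $\sigma$-algebra $\mathcal{G}$ generated by the former. By hypothesis there is an event $\Omega_0\in\mathcal{G}$ with $P(\Omega_0)=1$ on which $\tfrac1N\Tr(p(D_N^1,D_N^2))\to\tau(p(d_1,d_2))$ for every polynomial $p$ in two variables. On $\Omega_0$ the sequence $(D_N^1,D_N^2)$ may be treated as a fixed deterministic sequence satisfying the hypotheses of the theorem, while the conditional law of $W_N$ given $\mathcal{G}$ is still that of a Gaussian Wigner matrix independent of the $D$'s. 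Hence it suffices to prove the theorem when $D_N^1,D_N^2$ are deterministic, with ``a.s.''\ referring only to the randomness of $W_N$; a Fubini/tower argument then returns the unconditional statement.

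For deterministic $D_N^1,D_N^2$ with $\sup_{N,i}\|D_N^i\|\le C$ and joint limit $(d_1,d_2)$ in the $W^*$-probability space $(\A,\tau)$, Voiculescu's theorem in the form of \cite[Theorem 4.20]{mingo:speicher:2017}, together with the remarks following it (which cover real symmetric Wigner matrices), gives $\tfrac1N\E\Tr(p(W_N,D_N^1,D_N^2))\to\tau(p(s,d_1,d_2))$, where $s$ is a standard semicircular element freely independent of $(d_1,d_2)$; to realize $s$ inside $(\A,\tau)$ ``after expansion'' one replaces $(\A,\tau)$ by its free product with a $W^*$-probability space generated by a semicircular element, which contains an isomorphic copy of the original. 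Upgrading convergence in expectation to almost sure convergence is the one point that requires a computation: after a standard truncation to the overwhelmingly likely event $\{\|W_N\|\le3\}$, the map $\tfrac1N\Tr(p(W_N,D_N^1,D_N^2))$ is a Lipschitz function of the i.i.d.\ Gaussian entries with Lipschitz constant $O(N^{-1})$, so the Gaussian Poincar\'e inequality yields variance $O(N^{-2})$, and Chebyshev plus Borel--Cantelli give the a.s.\ limit. (If preferred, the a.s.\ version for deterministic $D_N^i$ is already part of the discussion cited above, and this step can be skipped.)

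The only genuinely substantive input is Voiculescu's theorem itself, which we are entitled to quote; granting that, the main obstacle is merely the careful bookkeeping of the conditioning step, namely turning ``for $\mathcal{G}$-a.e.\ realization of the $D$'s, a.s.\ in $W_N$'' into an unconditional a.s.\ statement — a routine application of Fubini's theorem once the conditional distribution of $W_N$ has been identified.
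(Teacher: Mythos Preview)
Your proposal is correct and aligns with the paper's treatment: the paper does not give an independent proof of this fact but simply states that it follows from \cite[Theorem 4.20]{mingo:speicher:2017} and the discussion immediately following it. Your sketch fills in precisely the two reductions left implicit there --- the conditioning on $(D_N^1,D_N^2)$ to reduce to the deterministic case, and the upgrade from convergence in expectation to almost sure convergence (which, as you note, is in fact already contained in the cited discussion) --- so there is nothing to correct.
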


\begin{fact}\label{f:rep}
Suppose that for all $n\ge1$, $Z_{n1}\ge\ldots\ge Z_{nn}$ are random variables such that
\[
\lim_{n\to\infty}\frac1n\sum_{j=1}^n\delta_{Z_{nj}}=\mu \quad \text{ weakly in probability}\,,
\]
for some probability measure $\mu$ on $\bbr$, where $\delta_x$ is the probability measure 
that puts mass $1$ at $x$. Then,
\[
\lim_{p\to0}\limsup_{n\to\infty}Z_{n\,[np]}=\sup\left(\supp (\mu)\right) \quad \text{ almost surely}\,,
\]
where $[x]$ denotes the smallest integer larger than or equal to $x$. 
\end{fact}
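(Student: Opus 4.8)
The plan is to peel off the randomness at the outset and then run a purely deterministic quantile argument based on the portmanteau theorem and the fact that, for each $n$, the $Z_{nj}$ are listed in decreasing order. Since $\mu$ is deterministic, I would work on the almost sure event $\Omega_0$ on which $\mu_n:=\frac1n\sum_{j=1}^n\delta_{Z_{nj}}$ converges weakly to $\mu$, and fix a realization in $\Omega_0$. Write $s:=\sup(\supp(\mu))\in(-\infty,+\infty]$, and for $p\in(0,1)$ set $L_p:=\limsup_{n\to\infty}Z_{n,[np]}$. Because $[np]$ is non-decreasing in $p$ while $Z_{n,1}\ge\cdots\ge Z_{n,n}$, the quantity $L_p$ is non-increasing in $p$, so $L:=\lim_{p\downarrow0}L_p=\sup_{p\in(0,1)}L_p$ is well defined in $(-\infty,+\infty]$; the goal is to show $L=s$.

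For the upper bound I would fix $\varepsilon>0$ (assuming $s<\infty$, else there is nothing to prove) and use that $[s+\varepsilon,\infty)$ is closed with $\mu$-measure zero: portmanteau then gives $\mu_n([s+\varepsilon,\infty))\to0$, i.e. $\#\{j\colon Z_{nj}\ge s+\varepsilon\}=o(n)$. Hence for any fixed $p$ one has $\#\{j\colon Z_{nj}\ge s+\varepsilon\}<np\le[np]$ for all large $n$, and since the $Z_{n,\cdot}$ are sorted decreasingly this is incompatible with $Z_{n,[np]}\ge s+\varepsilon$. Thus $L_p\le s+\varepsilon$ for every $p$, so $L\le s+\varepsilon$, and letting $\varepsilon\downarrow0$ gives $L\le s$.

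For the lower bound I would fix a real $t<s$; then $\mu((t,\infty))>0$ (otherwise $\supp(\mu)\subseteq(-\infty,t]$, contradicting $t<\sup(\supp(\mu))$), say $\mu((t,\infty))=:\delta>0$. Since $(t,\infty)$ is open, portmanteau gives $\liminf_n\mu_n((t,\infty))\ge\delta$, so $\#\{j\colon Z_{nj}>t\}>\tfrac12 n\delta$ for all large $n$. Picking $p\in(0,\tfrac12\delta)$ and using $[np]\le np+1$, for all large $n$ we get $[np]\le np+1\le\tfrac12 n\delta<\#\{j\colon Z_{nj}>t\}$, which (since the $Z_{n,\cdot}$ are sorted decreasingly) forces $Z_{n,[np]}>t$; hence $L\ge L_p\ge t$. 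Letting $t\uparrow s$ yields $L\ge s$, and combining with the previous paragraph, $L=s$ on $\Omega_0$, which is the claim.

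The computations here are all routine; the only points that need attention are the bookkeeping with the ceiling $[np]$ (which is why one fixes $p$ first and only then sends $n\to\infty$, so that $[np]$ can be compared with an $o(n)$ quantity in the upper bound and with a $cn$ quantity in the lower bound), and remembering that portmanteau supplies $\limsup_n\mu_n(F)\le\mu(F)$ for closed $F$ but $\liminf_n\mu_n(G)\ge\mu(G)$ for open $G$. The case $s=+\infty$ is covered automatically: only the lower bound is relevant, and the same estimate applies with $t$ taken arbitrarily large, since then $\mu((t,\infty))>0$ for every $t$. I do not anticipate a genuine obstacle in this lemma beyond this quantifier discipline.
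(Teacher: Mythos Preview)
There is a gap at the very first step: the hypothesis is that $\mu_n:=\frac1n\sum_j\delta_{Z_{nj}}\to\mu$ weakly \emph{in probability}, not weakly almost surely. Consequently the event $\Omega_0=\{\mu_n\to\mu\text{ weakly}\}$ on which you propose to work need not have probability one; convergence in probability to a deterministic limit yields a.s.\ convergence only along sub-subsequences, whereas $\limsup_{n\to\infty}Z_{n,[np]}$ runs over the full sequence. Once a realization in $\Omega_0$ is fixed, your portmanteau/quantile argument is correct and pleasantly direct, but the reduction that places you on $\Omega_0$ is unjustified. (Indeed, take $\mu$ uniform on $[0,1]$ and, independently over $n$, set all $Z_{nj}=10$ with probability $1/n$ and $Z_{nj}=1-(j-1)/n$ otherwise: then $\mu_n\to\mu$ in probability, yet by the second Borel--Cantelli lemma $\limsup_n Z_{n,[np]}=10$ a.s.\ for every $p$.)

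The paper does not make this reduction. For the lower bound it fixes $x$ with $\mu((-\infty,x))<1-p$, picks a continuity point $x-\varepsilon$, notes that $\{\limsup_n Z_{n,[np]}\le x-\varepsilon\}$ is contained in the event that $\frac1n\#\{j:Z_{nj}\le x-\varepsilon\}\ge 1-\frac1n[np]$ for all large $n$, and then uses $P(A_n\text{ for all large }n)\le\limsup_n P(A_n)$ together with convergence \emph{in probability} of the empirical count to conclude this probability is zero; letting $\varepsilon\downarrow0$ and then $x\uparrow\alpha$ along a countable family gives the a.s.\ lower bound. The upper bound \eqref{rep.eq3} is simply asserted in the paper without a detailed argument. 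If you want to repair your proof under the stated hypothesis, you should imitate this device---bound the probability of each tail event directly via $P(\liminf_n A_n)\le\liminf_n P(A_n)$ for a countable collection of thresholds---rather than passing to a single a.s.\ realization up front.
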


\begin{proof}
Our first claim is that if $x\in\bbr$ and $0<p<1$ are such that 
\[
\mu((-\infty,x))<1-p\,,
\]
then
\begin{equation}
\label{rep.eq1}
\limsup_{n\to\infty}Z_{n\,[np]}\ge x\,,\text{ almost surely}\,.
\end{equation}
To see why, fix $p,x$ as above and $\vep>0$ such that $\mu(\{x-\vep\})=0$, and note that the 
hypothesis implies that 
\begin{equation}
\label{rep.eq2}\lim_{n\to\infty}\frac1n\#\{1\le j\le n:Z_{nj}\le x-\vep\}=\mu((-\infty,x-\vep])
\quad \text{ in probability}\,.
\end{equation}
Therefore, 
\begin{align*}
&P\left(\limsup_{n\to\infty}Z_{n\,[np]}\le x-\vep\right)\\
&\le P\left(\frac1n\#\{1\le j\le n:Z_{nj}\le x-\vep\}\ge1-\frac1n[np]\text{ for large }n\right)\\
&\le\limsup_{n\to\infty}P\left(\frac1n\#\{1\le j\le n:Z_{nj}\le x-\vep\}\ge1-\frac1n[np]\right)=0\,,
\end{align*}
where the last step follows from \eqref{rep.eq2} and the observation that
\[
\lim_{n\to\infty}1-\frac1n[np]=1-p>\mu((-\infty,x))\ge\mu((-\infty,x-\vep])\,.
\]
Since $\vep>0$ can be chosen to be arbitrarily small such that $\mu(\{x-\vep\})=0$, \eqref{rep.eq1} follows.

It is immediate that $\limsup_{n\to\infty}Z_{n\,[np]}$ is monotone in $p$, and hence the a.s.\ limit exists 
as $p \downarrow 0$. Furthermore,
\begin{equation}
\label{rep.eq3}\lim_{p\to0}\limsup_{n\to\infty}Z_{n\,[np]}\le\alpha \quad \text{ a.s.}\,,
\end{equation}
where
\[
\alpha=\sup\left(\supp (\mu)\right)\,.
\]

To complete the proof, choose $x_k$ such that 
\[
x_k\uparrow\alpha\,,
\]
and $x_k<\alpha$. Since $\alpha$ is the right end point of the support of $\mu$, it follows that 
\[
\mu((-\infty,x_k))<1\,.
\]
Choosing 
\[
0<p_k<\left[1-\mu((-\infty,x_k))\right]\wedge\frac1k\,,k\ge1\,,
\]
we see that \eqref{rep.eq1} implies
\[
\limsup_{n\to\infty}Z_{n\,[np_k]}\ge x_k \quad \text{ a.s.}
\]
Therefore,
\[
\liminf_{k\to\infty}\limsup_{n\to\infty}Z_{n\,[np_k]}\ge\alpha \quad \text{ a.s.}\,,
\]
because $x_k\uparrow\alpha$. Since $p_k\downarrow0$, the left-hand side above equals 
that of \eqref{rep.eq3}. 
\end{proof}


\section*{Acknowledgment}
The authors are grateful to an anonymous referee for helpful comments. The research of AC and RSH 
was supported through MATRICS grant of SERB, the research of FdH and MS through NWO Gravitation 
Grant NETWORKS 024.002.003. The authors are grateful to ISI, NETWORKS and STAR for financial 
support of various exchange visits to Delft, Kolkata and Leiden.


\bibliographystyle{abbrvnat}

\end{document}